\theoremstyle{plain}
\newtheorem{theorem}{Theorem}
\newtheorem{lemma}[theorem]{Lemma}
\newtheorem{proposition}[theorem]{Proposition}
\newtheorem{corollary}[theorem]{Corollary}
\numberwithin{theorem}{section}
\numberwithin{equation}{theorem}
\theoremstyle{definition}
\newtheorem{definition}[theorem]{Definition}
\newtheorem{setup}[theorem]{Setup}
\newtheorem{remark}[theorem]{Remark}
\newtheorem*{question*}{Question}
\DeclareMathOperator{\PHom}{PHom}
\DeclareMathOperator{\SHom}{\underline{SHom}}
\DeclareMathOperator{\uPHom}{\underline{PHom}}
\DeclareMathOperator{\End}{End}
\DeclareMathOperator{\Ext}{Ext}
\DeclareMathOperator{\Tor}{Tor}
\DeclareMathOperator{\Hom}{Hom}
\DeclareMathOperator{\injdim}{injdim}
\DeclareMathOperator{\pdim}{projdim}
\DeclareMathOperator{\gldim}{gldim}
\DeclareMathOperator{\rgldim}{r.gldim}
\DeclareMathOperator{\im}{im}
\DeclareMathOperator{\depth}{depth}
\DeclareMathOperator{\gr}{gr}
\DeclareMathOperator{\mcm}{mcm}
\DeclareMathOperator{\uExt}{{\underline{Ext}}}
\DeclareMathOperator{\uHom}{{\underline{Hom}}}
\DeclareMathOperator{\uEnd}{{\underline{End}}}
\DeclareMathOperator{\coker}{coker}
\DeclareMathOperator{\add}{add}
\def\kk{\mathbbm{k}}
\def\mmod{\operatorname{mod}}
\def\gr{\operatorname{gr}}
\def\tor{\operatorname{tor}}
\def\Gr{\operatorname{Gr}}
\def\QGr{\operatorname{QGr}}
\def\qgr{\operatorname{qgr}}
\begin{document}

\title[Quasi-resolutions]
{Pre-resolutions of noncommutative isolated singularities}

\author{Ji-Wei He and Yu Ye}

\address{He: Department of Mathematics,
Hangzhou Normal University,
Hangzhou Zhejiang 311121, China}
\email{jwhe@hznu.edu.cn}
\address{Ye: School of Mathematical Sciences, University of Science and Technology of China, Hefei Anhui 230026, China\hfill \break
\indent\qquad CAS Wu Wen-Tsun Key Laboratory of Mathematics, University of Science and Technology of China, Hefei, Anhui, 230026, PR China}
\email{yeyu@ustc.edu.cn}

\begin{abstract} We introduce the notion of right pre-resolutions (quasi-resolutions) for noncommutative isolated singularities, which is a weaker version of quasi-resolutions introduced by Qin-Wang-Zhang in \cite{QWZ}. We prove that right quasi-resolutions for noetherian bounded below and locally finite graded algebra with right injective dimension 2 are always Morita equivalent. When we restrict to noncommutative quadric hypersurfaces, we prove that a noncommutative quadric hypersurface, which is a noncommutative isolated singularity, always admits a right pre-resolution. Besides, we provide a method to verify whether a noncommutative quadric hypersurface is an isolated singularity. An example of noncommutative quadric hypersurfaces with detailed computations of indecomposable maximal Cohen-Macaulay modules and right pre-resolutions is included as well.
\end{abstract}

\subjclass[2010]{16S37, 16E65, 16G50}


\keywords{Right pre-resolution, noncommutative isolated singularity, noncommutative quadric hypersurface}


\maketitle


\setcounter{section}{-1}
\section{Introduction}

Let $R$ be a commutative normal Gorenstein domain. Van den Bergh introduced the notion of noncommutative crepant resolutions of $R$ in \cite{vdB}. Roughly speaking, a noncommutative crepant resolution of $R$ is an $R$-algebra of the form $\Lambda=\End_R(M)$, where $M$ is a reflexive $R$-module. Iyama-Reiten extended the notion of noncommutative crepant resolutions to module-finite commutative algebras over a noetherian commutative Cohen-Macaulay ring (cf. \cite{IR}). Let $R$ be a commutative Cohen-Macaulay equi-codimensional normal Gorenstein domain with a canonical module. It has been proven that noncommutative crepant resolutions of $R$ are always derived equivalent provided $\dim R\leq 3$ (cf. \cite[Corollary 8.8]{IR} and \cite[Theorem 1.5]{IW}).

In order to study noncommutative singularities, Qin-Wang-Zhang extended the notion of noncommutative resolutions to noncommutative algebras which are possibly not module-finite over their centers (cf. \cite{QWZ}). Let $A$ be a (both left and right) noetherian algebra over a field $\kk$, and let $\partial$ be a symmetric dimension function of the category of right $A$-modules. Two right $A$-modules $M$ and $N$ are said to be $s$-isomorphic (cf. \cite{QWZ}) if there is a right $A$-module $P$ and two homomorphisms $f\colon P\to M$ and $g\colon P\to N$ such that the $\partial$-dimensions of the kernels and the cokernels of $f$ and $g$ are no larger than $s$. The following definition was given in \cite{QWZ}.

\begin{definition}\label{def0} Let $A$ be a (both left and right) noetherian algebra with $\partial$-dimension $d$. If there is a noetherian Auslander regular $\partial$-Cohen-Macaulay algebra $B$ with $\partial$-dimension $d$ and two finitely generated bimodules ${}_BM_A$ and ${}_AN_B$ such that $M\otimes_AN$ is $(d-2)$-isomorphic to $B$ and $N\otimes_BM$ is $(d-2)$-isomorphic to $A$ as bimodules, then $B$ is called a {\it noncommutative quasi-resolution} of $A$.
\end{definition}

Qin-Wang-Zhang proved that noncommutative quasi-resolutions of a noetherian algebra $A$ with $\partial$-dimension 2 are Morita equivalent. If $\partial$-dimension of $A$ is 3, then noncommutative quasi-resolutions of $A$ are derived equivalent (with further assumptions on $A$, cf. \cite[Theorem 0.6]{QWZ}). Thus, they generalized the corresponding results in \cite{IR} and \cite{IW}.

If further, $A$ is Auslander Gorenstein and $\partial$-Cohen-Macaulay, then the algebra $B$ in Definition \ref{def0} is isomorphic to the endomorphism algebra $\End_A(U)$ for some bimodule ${}_BU_A$ which is reflexive on both sides (cf. \cite[Corollary 3.13]{QWZ}).

Unlike the commutative case, given a noncommutative noetherian algebra $A$ and a finitely generated right $A$-module $U$, it is usually a tough task to check whether $\End_A(U)$ is a noetherian algebra. In this sense, to find a noncommutative quasi-resolution of a noetherian algebra is not an easy job in general.

In this paper, we only consider the noncommutative resolutions of noncommutative graded isolated singularities (cf. Section \ref{sectnr}), which allows us to drop some restrictions on the algebras as given in Definition \ref{def0}.

Now let $A$ be a bounded below graded algebra, that is, $A=\oplus_{i\in\mathbb Z}A_i$ with $A_i=0$ for $i<<0$. Assume $A$ is right noetherian and locally finite. Let $\gr A$ be the category of finitely generated right graded $A$-modules, and $\tor A$ the subcategory of $\gr A$ consisting of finite dimensional modules. Let $\qgr A=\gr A/\tor A$. We introduce the following weaker version of noncommutative resolutions of right noetherian algebras, which is much closer to Van den Bergh's original definition of noncommutative crepant resolution (cf. \cite{vdB}).

\begin{definition}(More precisely, see Definition \ref{def3}) Let $A$ be a right noetherian graded algebra which is bounded below and locally finite with injective dimension $\injdim A_A=d<\infty$. If there is a small maximal Cohen-Macaulay (cf. Section \ref{sectnr}) module $M_A$ with $B=\End_A(M)$ such that

{\rm(i)} right global dimension $\rgldim (B)=d$,

{\rm(ii)} the functor $\uHom_A(M,-)\colon \gr A\longrightarrow\gr B$ induces an equivalence $\qgr A\cong\qgr B$, \\
then we call $B$ a {\it right pre-resolution} of $A$.

If further, $B$ is {\it right generalized Artin-Schelter regular} (cf. Definition \ref{def2}), then we call $B$ a {\it right quasi-resolution} of $A$.
\end{definition}

Then we have the following result (cf. Theorem \ref{thm3}) parallel to the ones in \cite{IR,IW,QWZ}. We remark that our proof is quite different from theirs. In fact, the algebras considered in \cite{IR,IW,QWZ} are assumed to be noetherian on both sides, while we only assume the right noetherianness here.

\begin{theorem} Let $A$ be a right noetherian graded algebra which is bounded below and locally finite with injective dimension $\injdim A_A=2$. If $A$ has a right quasi-resolution, then

{\rm(i)} $A$ is CM-finite, that is, there are only finitely many nonisomorphic indecomposable maximal Cohen-Macaulay (MCM) right $A$-modules (up to degree shifts);

{\rm(ii)} any two right quasi-resolutions of $A$ are graded Morita equivalent.
\end{theorem}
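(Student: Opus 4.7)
My strategy is to use the categorical equivalence $\qgr A \cong \qgr B$ to translate the classification of indecomposable MCM $A$-modules into the classification of indecomposable projective $B$-modules, which is a finite set because $B$ is right noetherian and locally finite. Morita equivalence between two right quasi-resolutions then follows by comparing both to a ``basic'' endomorphism ring built from one copy of each indecomposable MCM.

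The first step is to show that under the equivalence $F = \uHom_A(M,-) \colon \qgr A \to \qgr B$, MCM $A$-modules correspond to projective objects in $\qgr B$. Here the hypothesis $\injdim A_A = 2 = \rgldim B$ lets the Ext-vanishing conditions defining MCM on the two sides be matched up; and because $B$ is right generalized AS regular of global dimension $2$, a finitely generated graded MCM right $B$-module must already be projective. Consequently, for any indecomposable MCM right $A$-module $N$, the module $F(N) = \uHom_A(M,N)$ represents, inside $\qgr B$, an indecomposable summand of $B^n$ for some $n$. Since $B_B$ is finitely generated and $B$ is locally finite, $B$ admits a Krull--Schmidt decomposition into finitely many indecomposable projective summands (up to degree shift), and pulling this back through $F$ yields only finitely many indecomposable MCM $A$-modules up to shift, establishing (i). Moreover, since $F(M) = B$ is itself a projective generator for the image of the MCM category in $\qgr B$, every indecomposable MCM $A$-module appears as a direct summand of $M$ (possibly after a degree shift).

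For (ii), let $B_1 = \End_A(M_1)$ and $B_2 = \End_A(M_2)$ be two right quasi-resolutions, and let $\{N_1,\dots,N_s\}$ be the complete list of indecomposable MCM $A$-modules (up to shift) provided by (i). Setting $M_0 = \bigoplus_{i=1}^s N_i$ and $B_0 = \End_A(M_0)$, the previous step forces $\add M_1 = \add M_0 = \add M_2$ inside the MCM category; the bimodules $\uHom_A(M_0, M_j)$ for $j = 1,2$ are then progenerators realizing graded Morita equivalences between $B_0$ and each $B_j$, whence $B_1$ and $B_2$ are graded Morita equivalent.

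The main technical obstacle is the first step: transporting the MCM condition across the $\qgr$-equivalence. One must control the finite-dimensional torsion discrepancy between an $A$-module and any representative of its image in $\qgr B$, and show that $\uExt^i_A(N,A) = 0$ for $i > 0$ corresponds precisely to $\uExt^i_B(F(N),B) = 0$ for $i > 0$. I expect the generalized AS regularity of $B$ to be crucial here, through the balanced dualizing complex and local duality it provides, which are what ultimately identify the MCM condition on the $B$-side with projectivity.
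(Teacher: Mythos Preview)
Your overall strategy matches the paper's: show that for any MCM $A$-module $N$ the $B$-module $X=\uHom_A(M,N)$ is projective, deduce $N\in\add(M)$, and then obtain (ii) by symmetry exactly as you describe. The gap is that you leave the crucial step --- projectivity of $X$ --- as an ``expectation'' via balanced dualizing complexes and local duality. This route is shaky here: $B$ is only assumed \emph{right} noetherian and right GAS-regular, so the existence of a balanced dualizing complex is not available; the functor $\uHom_A(M,-)$ is not exact, so there is no direct comparison of $\uExt^i_A(N,A)$ with $\uExt^i_B(X,B)$; and the paper defines MCM through $R^i\Gamma(N)=0$ for $i\neq d$, not through $\uExt^i_A(N,A)=0$ --- since $A$ is not assumed AS-Gorenstein in this theorem, these need not coincide.

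The paper's argument for this step is both different and more elementary. Since $N$ is MCM, $\depth_A(N)\ge 2$, hence $\omega\pi(N)\cong N$ and the map $\Hom_{\gr A}(M,N(k))\to\Hom_{\qgr A}(\pi(M),\pi(N)(k))$ is bijective for every $k$. Transporting through $\mathcal F$ and identifying $\mathcal F(\pi(M))=\pi(B)$, this says exactly that $X_k\to\Hom_{\qgr B}(\pi(B),\pi(X)(k))=\underset{n\to\infty}\lim\Hom_{\gr B}(J^n,X(k))$ is bijective. Feeding this into the long exact sequence arising from $0\to J^n\to B\to B/J^n\to 0$ yields $\Gamma(X)=R^1\Gamma(X)=0$, i.e.\ $\depth_B(X)\ge 2$. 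Now the Auslander--Buchsbaum formula for right GAS-Gorenstein algebras (proved earlier in the paper, and the \emph{only} place GAS-regularity of $B$ enters) gives $\pdim_B(X)=0$. No dualizing complex is needed; once $X\in\add(B)$, pulling back through $\mathcal F$ and applying $\omega\pi\cong\mathrm{id}$ on MCM modules gives $N\in\add(M)$, which you should also make explicit.
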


If $A$ is an AS-Gorenstein algebra (cf. Definition \ref{def1}) which is a noncommutative isolated singularity, then the CM-finiteness will induce the existence of right pre-resolutions, as shown in the following results (cf. Theorems \ref{thm1} and \ref{thm2}).

\begin{theorem} Let $A$ be an AS-Gorenstein algebra which is a noncommutative isolated singularity.
\begin{itemize}
  \item [(i)] Let $M_A$ be an MCM module. Then $\uEnd_A(M)$ is a right noetherian graded algebra.
  \item [(ii)] Assume that $A$ is CM-finite and $\injdim A\ge2$. Let $\{P_0=A,P_1,\dots,P_n\}$ be the set of all the nonisomorphic indecomposable MCM modules (up to degree shifts). Let $M=\bigoplus_{i=1}^nP_n\oplus A$. Then $B:=\uEnd_A(M)$ is a right pre-resolution of $A$.
\end{itemize}
\end{theorem}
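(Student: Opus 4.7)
The plan is to prove (i) by exploiting the isolated-singularity hypothesis to control Ext-groups, and then to bootstrap to (ii) via a classical Auslander-type global-dimension argument combined with a Morita-like equivalence in $\qgr$.

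For part (i), I would first invoke the isolated-singularity hypothesis in the form: for finitely generated graded $A$-modules $X$ and $Y$, the ``stable'' part of $\uHom_A(X,Y)$ and the groups $\uExt^{i}_A(X,Y)$ for $i\geq 1$ are torsion (finite-dimensional in each degree, bounded above), at least when one of $X,Y$ is MCM. Granted this, the canonical map $\uEnd_A(M)\to \uEnd_{\qgr A}(\pi M)$, with $\pi\colon \gr A\to \qgr A$ the Serre quotient, has finite-dimensional kernel and cokernel. The target is the ``global sections'' ring of an endomorphism sheaf on the smooth noncommutative projective scheme $\qgr A$, which one shows is noetherian by the usual coherence arguments. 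One then promotes this to right noetherianness of $\uEnd_A(M)$ via the elementary observation that a bounded-below, locally-finite graded algebra differing from a noetherian one by a finite-dimensional piece is itself noetherian.

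For part (ii), (i) delivers that $B=\uEnd_A(M)$ is right noetherian. For the equality $\rgldim B = d$, I use the classical Auslander argument: since $\add M=\MCM(A)$ by CM-finiteness, each MCM $Y$ gives a projective right $B$-module $\uHom_A(M,Y)$. Any finitely generated right $B$-module is of the form $\uHom_A(M,X)$ for some $X\in\gr A$ (using that $A$ is a summand of $M$), and a minimal MCM-resolution $0\to Y_d\to\cdots\to Y_0\to X\to 0$, of length $\le d=\injdim A$ by AS-Gorensteinness, becomes, after applying $\uHom_A(M,-)$, a projective $B$-resolution of length $\le d$ since $\uExt^{\geq 1}_A(M,Y_i)=0$ for MCM $Y_i$. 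The reverse inequality $\rgldim B\ge d$ follows from $\injdim A=d$ and the identification $A\cong eBe$, where $e\in B$ is the idempotent cutting out the summand $A\subseteq M$. For the equivalence $\qgr A\cong\qgr B$, $A$ being a summand of $M$ realizes $B$ as an ``enlargement'' of $A$ by the MCM pieces $P_1,\ldots,P_n$; the discrepancy between $\gr A$ and $\gr B$ under $\uHom_A(M,-)$ is torsion, supported at the irrelevant ideal by the isolated-singularity hypothesis, and hence vanishes after passing to $\qgr$.

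The main obstacle is the first step of (i): making the ``isolated singularity implies Ext-finiteness'' statement precise in the one-sided-noetherian, bounded-below, locally-finite graded setting of this paper, and then rigorously transferring noetherianness from $\uEnd_{\qgr A}(\pi M)$ back to $\uEnd_A(M)$. This likely requires a local-duality argument relating $\uExt^i_A(-,-)$ to local cohomology for MCM modules, plus a coherence argument for $\uEnd_{\qgr A}(\pi M)$. A secondary technical challenge in part (ii) is ensuring that MCM approximations and resolutions of length $\leq d$ exist in this generality; the standard proof uses a depth/syzygy lemma whose one-sided version needs verification.
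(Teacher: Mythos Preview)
Your approach to part (i) has a genuine gap. The reduction to $\qgr A$ is circular: since $M$ is MCM, $\depth M = d$, and as soon as $d\ge 2$ the map $\uEnd_A(M)\to \uEnd_{\qgr A}(\pi M)$ that you propose to exploit is an \emph{isomorphism} (by Lemma~\ref{lemss}, $\omega\pi(M)\cong M$, so $\Hom_{\qgr A}(\pi M,\pi M(k))\cong\Hom_{\gr A}(M,M(k))$). There is no finite-dimensional discrepancy to absorb, and you are left needing to prove right noetherianness of $\uEnd_{\qgr A}(\pi M)$ directly, which is exactly the original problem. The ``usual coherence arguments'' you allude to are not available here: finite global dimension of $\qgr A$ says nothing about noetherianness of graded endomorphism rings, and in the genuinely noncommutative setting $\uEnd_A(M)$ is typically \emph{not} module-finite over $A$ or over any obvious noetherian subring.

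The paper's argument for (i) is entirely different and quite direct. Given a right ideal $I\subseteq B=\uEnd_A(M)$, one uses right noetherianness of $M_A$ to find a finite subset $S_0\subseteq I$ with $IM=\sum_{b\in S_0}b(M)=:N$, giving an epimorphism $\phi\colon \bigoplus M(-k_i)\twoheadrightarrow N$. Any $b'\in I$ that factors through a projective then lifts through $\phi$ and lies in $\sum b_iB$. The remaining elements are controlled by the key input you correctly identified---finite-dimensionality of $\SHom_A(M,N)$---which bounds $I/(I\cap\uPHom_A(M,N))$ and supplies finitely many additional generators. So the isolated-singularity hypothesis is used at the level of stable Hom, not via any passage to $\qgr$.

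For part (ii) your outline is broadly compatible with the paper, but two points deserve correction. First, it is not true that every finitely generated $B$-module is of the form $\uHom_A(M,X)$; rather, every such module has a projective \emph{presentation} by modules of that form, and one must argue via syzygies and CM-approximations (this is the Auslander--Leuschke argument you gesture at, and indeed the paper simply cites \cite{CKWZ} for $\rgldim B=d$). Second, the equivalence $\qgr A\cong\qgr B$ in the paper is obtained via the idempotent $e$ corresponding to the summand $A\subseteq M$: one shows $B/BeB\cong\SHom_A(M,M)$ is finite-dimensional (again using the isolated-singularity hypothesis), whence $-\otimes_A M^\vee$ induces the equivalence, and then a separate lemma (Lemma~\ref{lem2}) identifies this with $\uHom_A(M,-)$ up to torsion.
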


Let $S$ be a quantum polynomial algebra (cf. Section \ref{secthyper}), that is, $S$ is a Koszul AS-regular algebra with Hilbert series $H_S(t)=\frac{1}{(1-t)^n}$ for some $n\ge1$. Pick a central regular element $\varpi\in A$ of degree 2. The quotient algebra $A=S/S\varpi$ is called a noncommutative quadric hypersurface. Note that a noncommutative quadric hypersurface is CM-finite if and only if it is a noncommutative isolated singularity (cf. \cite[Theorem 4.13]{MU}). Hence a noncommutative quadric hypersurface $A$ which is also a noncommutative isolated singularity always admits a right pre-resolution, and to find such a pre-resolution it suffices to compute all the indecomposable MCM-modules of $A$.

Assume $A$ is a noncommutative quadric hypersurface with injective dimension $d$. Let $\Omega^d(\kk_A)$ be the $d$-th syzygy of the trivial module $\kk_A$. Set $$\mathbb M:=\Omega^d(\kk_A)(d).$$ Then $\mathbb M$ is a Koszul MCM module. Associated to $A$, Smith-Van den Bergh constructed a finite dimensional algebra $C(A)$, and proved that the stable category of MCM modules over $A$ is equivalent to the derived category of $C(A)$. In this paper, we prove the following results, which provides a relatively easy way to compute the algebra $C(A)$ and to find all the indecomposable MCM modules of $A$. Consequently, we show a method to construct a right pre-resolution of $A$ in case $A$ is a CM-finite.

\begin{theorem}\label{thm0quadric}(cf. Theorems \ref{qthm1} and \ref{thm-res-iso}) Let $S$ be a quantum polynomial algebra, and let $\varpi\in A$ be a central regular element of degree 2. Set $A:= S/S\varpi$. We have the follow consequences.
\begin{itemize}
  \item [(i)] $\End_{\gr A}(\mathbb M)\cong C(A)$;
  \item [(ii)] $A$ is a noncommutative isolated singularity if and only if $\End_{\gr A}(\mathbb M)$ is semisimple.
  \item [(iii)] Assume that $A$ is a noncommutative isolated singularity.
Then $B=\uEnd_A(\mathbb M\oplus A)$ is a right pre-resolution of $A$.
\end{itemize}
\end{theorem}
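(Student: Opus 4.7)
For part (i), the approach is Koszul-theoretic. Since $S$ is Koszul AS-regular and $\varpi$ is a central regular element of degree $2$, the quotient $A = S/S\varpi$ is itself Koszul and AS-Gorenstein of injective dimension $d$. The minimal graded projective resolution of $\kk_A$ is the Koszul complex, so $\Omega^d(\kk_A)$ is generated in degree $d$ with top isomorphic to $(A^!_d)^*$, and $\mathbb{M} = \Omega^d(\kk_A)(d)$ is therefore a Koszul MCM module generated in degree zero. First I would compute $\End_{\gr A}(\mathbb{M})$ using this Koszul presentation, expressing it via $\Ext$-data between shifts of $\kk_A$ truncated appropriately, and then match the resulting algebra with Smith--Van den Bergh's construction of $C(A)$ generator by generator and relation by relation, exploiting the fact that both algebras are built from the quadratic form $\varpi$ transported to the Koszul dual side.

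For part (ii), the key input is Smith--Van den Bergh's derived equivalence $\underline{\mcm}(A) \simeq D^b(\mmod C(A))$, together with the Mori--Ueyama criterion (cited from \cite{MU}) which identifies CM-finiteness with being a noncommutative isolated singularity in the quadric hypersurface setting. CM-finiteness of $A$ translates, via the derived equivalence, to $D^b(\mmod C(A))$ having only finitely many indecomposable objects up to shift; for a finite-dimensional algebra this classically forces $C(A)$ to be semisimple (otherwise iterated syzygies produce an infinite family of pairwise non-isomorphic indecomposables), and conversely semisimplicity of $C(A)$ makes every object of $D^b(\mmod C(A))$ a direct sum of shifted simples, yielding CM-finiteness. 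Combined with part (i), this gives the stated equivalence with semisimplicity of $\End_{\gr A}(\mathbb{M})$.

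For part (iii), assume $A$ is a noncommutative isolated singularity. By (i) and (ii), $\End_{\gr A}(\mathbb{M}) \cong C(A)$ is semisimple, so $\mathbb{M}$ decomposes in $\gr A$ into a direct sum of indecomposable MCM modules corresponding to the simple $C(A)$-modules. Under the Smith--Van den Bergh equivalence these simples, together with their shifts, exhaust the indecomposable objects of $\underline{\mcm}(A)$, so together with $A$ (the only indecomposable projective MCM module, up to shift) the module $\mathbb{M} \oplus A$ realizes an additive generator of $\mcm(A)$ modulo degree shifts. The pre-resolution theorem stated in the introduction (the CM-finite case of the second theorem) then immediately yields that $B = \uEnd_A(\mathbb{M} \oplus A)$ is a right pre-resolution of $A$.

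The main obstacle I anticipate is part (i): the identification of $\End_{\gr A}(\mathbb{M})$ with $C(A)$ has to be a concrete algebra isomorphism, not merely an abstract correspondence, which requires laying out both algebras in compatible Koszul-dual terms and checking the multiplication carefully. Once (i) is in hand, parts (ii) and (iii) follow rather formally from the Smith--Van den Bergh equivalence, the Mori--Ueyama criterion, and the pre-resolution existence result already established earlier in the paper.
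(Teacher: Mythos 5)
Your outline for parts (ii) and (iii) matches the paper's logic: for (ii) the paper simply cites \cite{HY} and \cite{MU}, and for (iii) the paper combines the existence theorem for pre-resolutions (Theorem \ref{thm2}) with the fact (Lemma \ref{lem-ind}) that every nonprojective indecomposable MCM module is a direct summand of $\mathbb M$ up to shift, which is exactly what you describe. But for part (i), your plan is genuinely different from what the paper does, and it is also the weakest part of your sketch. You propose to compute $\End_{\gr A}(\mathbb M)$ directly from the Koszul complex and then match it with $C(A)=A^![\varpi^{-1}]_0$ ``generator by generator and relation by relation.'' This is plausible (and indeed Proposition \ref{prop2} in the paper gives a concrete linear-algebra description of $\End_{\gr A}(\mathbb M)$ sitting inside $\End_\kk(C_d)$), but turning that description into the Clifford-deformation picture of $C(A)$ is precisely the nontrivial content of \cite{HY} and is not a quick relation-by-relation check. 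The paper instead avoids all of this by chasing $\Omega^d(\kk_A)[d]$ through the commutative diagram of triangle equivalences (Buchweitz duality $G$, Koszul duality $\overline K$, and localization $L$): since $\Omega^d(\kk_A)[d]\cong\kk_A$ in $D^b(\gr A)/\mathrm{per}\,A$ and $K(\kk_A)=A^!$, one gets $F(\Omega^d(\kk_A)[d])\cong C(A)$ for free, so $\End_{\underline{\mcm}A}(\mathbb M)\cong\End_{D^b(\mmod C(A))}(C(A))\cong C(A)$ with no explicit multiplication check.

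One concrete gap you should flag in your own plan: even after identifying $\End_{\underline{\mcm}A}(\mathbb M)$, you still must pass from the \emph{stable} endomorphism ring to the genuine ring $\End_{\gr A}(\mathbb M)$, i.e.\ you must show $\uPHom_A(\mathbb M,\mathbb M)_0=0$. The paper does this via Proposition \ref{qlem2}: $\mathbb M^\vee(1)$ is left Koszul, hence $\mathbb M^\vee$ is generated in degree $1$, so $\Hom_{\gr A}(\mathbb M,A)=0$. Your proposal never addresses this step, and without it the claimed isomorphism $\End_{\gr A}(\mathbb M)\cong C(A)$ does not follow even if the stable version is established. Relatedly, in (ii) your assertion that CM-finiteness of $A$ gives $D^b(\mmod C(A))$ finitely many indecomposables ``up to shift'' quietly conflates the degree shift $(1)$ on the $\underline{\mcm}A$ side with the triangulated shift $[1]$ on the $D^b(\mmod C(A))$ side; under Koszul duality $(1)$ becomes $[-1](1)$ and one must still argue how the residual degree shift acts through the localization $A^!\to A^![\varpi^{-1}]$. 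The paper sidesteps this by citing \cite[Theorem 6.3]{HY} rather than re-deriving it.
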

We remark that the second statement of the above theorem follows from \cite[Theorem 6.3]{HY} (see also \cite[Theorem 4.13]{MU}).

In the view of Theorem \ref{thm0quadric}, some properties of indecomposable MCM modules of $A$ are obtained in Section \ref{secthyper}. We provide a concrete example of quadric hypersurfaces with detailed computations of indecomposable MCM modules and right pre-resolutions in the last section.

\section{Preliminaries}
Throughout, $\kk$ will be a field of characteristic zero and all algebras considered are over $\kk$.

Let $A$ be a $\mathbb Z$-graded $\kk$-algebra. We denote by $\Gr A$ the category of right graded $A$-modules, and by $\Hom_{\Gr A}(M,N)$ the set of homogeneous right $A$-module homomorphisms which preserve the degrees of elements for $M, N\in \Gr A$. For $k\in \mathbb Z$, $M(k)$ is the right graded $A$-module such that $M(k)_n=M_{n+k}$. We write $\uHom_A(M,N)=\bigoplus_{k\in\mathbb Z}\Hom_{\Gr A}(M,N(k))$, and $\uExt_A^i(M,N)$ for the derived functor of $\uHom_A(M,N)$. In particular, we write $\uEnd_A(M)$ for $\uHom_A(M,M)$.

Let $\PHom_{\Gr A}(M,N)$ be the subset of $\Hom_{\Gr A}(M,N)$ consisting of homomorphisms which factor through some graded projective modules, and let $\uPHom_A(M,N)=\bigoplus_{k\in\mathbb Z}\PHom_{\Gr A}(M,N(k))$. The set of graded stable homomorphism is denoted by $\SHom_A(M,N)=\uHom_A(M,N)/\uPHom_A(M,N)$.

Let $A$ be a right noetherian graded algebra, and let $\gr A$ be the full subcategory of $\Gr A$ consisting of finitely generated modules. A right graded module $M\in \Gr A$ is called a {\it torsion} module if for every $m\in M$, the right submodule $mA$ is finite dimensional. Let $\Tor A$ be the full subcategory of $\Gr A$ consisting of all the torsion modules, and let $\tor A$ be the full subcategory of $\Tor A$ consisting of finite dimensional ones. Since $A$ is right noetherian, $\Tor A$ ( resp. $\tor A$) is a Serre subcategory of $\Gr A$ (resp. $\gr A$). The quotient categories $\QGr A=\Gr A/\Tor A$ and $\qgr A=\gr A/\tor A$ are both abelian and $\qgr A$ is an abelian subcategory of $\QGr A$. Let $\pi\colon\Gr A\to \QGr A$ be the projection functor. Then $\pi$ has a right adjoint functor $\omega\colon \QGr A\to \Gr A$ such that $\pi\omega\cong id$.

For $M\in \Gr A$, we write $\mathcal{M}=\pi(M)$. The Hom-sets in $\QGr A$ is defined by $$\Hom_{\QGr A}(\mathcal{M},\mathcal{N})=\underset{\longrightarrow}\lim\Hom_{\Gr A}(M',N/K),$$ where the limit runs over all the pairs $(M',K)$ with $K\subseteq N$ and $M'\subseteq M$ such that both $K$ and $M/M'$ are torsion modules. We refer to \cite{AZ} for more information about the quotient categories.

A graded algebra $A$ is {locally finite} if $\dim A_n<\infty$ for all $n\in \mathbb Z$, and $A$ is {\it bounded below} if $A_n=0$ for all $n<<0$. If $A_0=\kk$ and $A_n=0$ for all $n<0$, then $A$ is said to be {\it connected} graded.

We recall the following classical definition (cf. \cite{AS}).
\begin{definition}\label{def1} Let $A$ be a (both left and right) noetherian connected graded algebra. $A$ is called an {\it Artin-Schelter Gorenstein} algebra of dimension $d$ if
\begin{enumerate}
  \item [(i)] $\injdim{}_AA=\injdim A_A=d<\infty$;
  \item [(ii)] $\uExt_A^i(\kk,A)=0$ for all $i\neq d$, and $\uExt^d_A(\kk,A)=\kk(l)$ for some $l$;
  \item [(iii)] the left version of (ii) is satisfied.
\end{enumerate}

\noindent If further, $\gldim A=d$, then $A$ is called an {\it Artin-Schelter regular} algebra. The integer $l$ is usually called
the {\it Gorenstein parameter} of $A$.
\end{definition}

We need a more general version of Artin-Schelter Gorenstein algebras in this paper. Without otherwise statement, we always assume that $A$ is a right noetherian graded algebra which is locally finite and bounded below. Let $J(A)$ be the graded Jacobson radical of $A$. We recall the following well-known facts without giving a proof.

\begin{lemma}\label{lem0} Retain the notations as above. Then the following statements hold true.
\begin{enumerate}
  \item[(i)] $A/J(A)$ is finite dimensional.
  \item[(ii)] Let $J(A_0)$ be the Jacobson radical of $A_0$. Then $J(A_0)=J(A)\cap A_0$.
  \item[(iii)] There is an integer $n_0$ such that $J(A)\supseteq A_{\ge n_0}$, and $\bigcap_{n\ge0}J(A)^n=0$.
\end{enumerate}
\end{lemma}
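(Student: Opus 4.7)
My plan is to deduce all three parts from a single structure theorem for graded-simple right $A$-modules; once that is in hand, (i)--(iii) fall out essentially by inspection. Reducing (up to a harmless shift of indexing) to the case that $A$ is non-negatively graded, so that $A_{\ge 1}$ is a two-sided graded ideal, I would show that every graded-simple right $A$-module $S$ is concentrated in a single degree $n$, with $S_n$ a simple right $A_0$-module on which $A_{\ge 1}$ acts trivially. The argument is standard: simplicity makes $S$ cyclic, hence finitely generated, and bounded-below-ness then forces $S$ to have a minimal nonzero degree $n$; the subspace $\bigoplus_{k>n} S_k$ is a proper graded $A$-submodule and so vanishes by simplicity.

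From this I would identify $J(A)=J(A_0)+A_{\ge 1}$. For $\supseteq$, both $J(A_0)$ and $A_{\ge 1}$ annihilate every graded simple by the previous step, so each sits inside every graded maximal right ideal. For $\subseteq$, the graded right $A$-module $A_0/J(A_0)$ (with $A_{\ge 1}$ acting trivially) is a direct sum of graded simples whose annihilator is exactly $J(A_0)+A_{\ge 1}$. Part (i) is then immediate, since $A/J(A)\cong A_0/J(A_0)$ and $A_0$ is finite-dimensional by local finiteness; (ii) reads $J(A)\cap A_0=J(A_0)$ off the same identification; and the first half of (iii) is immediate with $n_0=1$.

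The remaining Krull-intersection statement $\bigcap_n J(A)^n=0$ is the one piece requiring a small calculation. Since $A_0$ is finite-dimensional, $J(A_0)$ is nilpotent, say $J(A_0)^N=0$. In a product of $N$ homogeneous elements of $J(A)=J(A_0)\oplus A_{\ge 1}$, either every factor lies in $J(A_0)$ (so the product vanishes) or at least one factor lies in $A_{\ge 1}$ (so the product lands in the two-sided ideal $A_{\ge 1}$); hence $J(A)^N\subseteq A_{\ge 1}$, and iterating gives $J(A)^{kN}\subseteq A_{\ge 1}^k\subseteq A_{\ge k}$. Since each $J(A)^n$ is a graded subspace and $\bigcap_k A_{\ge k}=0$, taking graded components concludes the argument.

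I expect no serious obstacle here: the lemma is explicitly tagged as standard, and the real challenge is only presentational, namely saying just enough to be self-contained. The one substantive step is the Krull estimate above, and even that is short once the identification $J(A)=J(A_0)+A_{\ge 1}$ is in place.
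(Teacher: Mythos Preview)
The paper gives no proof of this lemma (it is stated as well-known), so there is nothing to compare against directly. Your argument is essentially complete for non-negatively graded $A$, but the reduction step ``up to a harmless shift of indexing'' is not valid: a graded algebra has $1\in A_0$, so one cannot translate the grading, and the paper explicitly allows $A$ to have nonzero components in negative degrees (this is precisely the point of working with bounded-below rather than $\mathbb N$-graded algebras; see the remark following Definition~\ref{def3}). When $A_{<0}\neq 0$, the subspace $\bigoplus_{k>n}S_k$ of a graded simple $S$ need not be an $A$-submodule, so your argument that $S$ is concentrated in a single degree breaks down, and the identification $J(A)=J(A_0)\oplus A_{\ge 1}$ is in general false.

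The repair is short. Say $A_i=0$ for $i<-m$. If $S$ is graded simple with minimal nonzero degree $p$, then the $A$-submodule generated by $\bigoplus_{k>p+m}S_k$ is contained in $\bigoplus_{k>p}S_k\subsetneq S$ and hence vanishes, so $S$ lives in degrees $[p,p+m]$ and is finite dimensional. Since $1\in A_0$ has nonzero image in every simple quotient $A/M$, each such quotient meets degree $0$ and is therefore concentrated in $[-m,m]$; this yields (i) and the first half of (iii) with $n_0=m+1$. For (ii) one checks $J(A)_0=J(A_0)$ directly: every nonzero homogeneous piece $S_q$ of a graded simple is itself a simple $A_0$-module (any nonzero $s\in S_q$ satisfies $sA_0=S_q$), giving $J(A_0)\subseteq J(A)_0$, and conversely every simple $A_0$-module occurs as the degree-$0$ part of an induced graded simple. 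For the Krull intersection your nilpotency estimate on $J(A_0)$ no longer suffices, since negative-degree factors can offset positive ones; instead one argues degreewise that the decreasing chain $(J^n)_D\subseteq A_D$ stabilises, and uses this (together with the fact that only finitely many degrees contribute to $(J^{n-1}J)_D$) to show that $V:=\bigcap_n J^n$ satisfies $V=VJ$, whence $V=0$ by the graded Nakayama lemma.
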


We generalize Artin-Schelter Gorenstein algebras to right noetherian bounded below algebras.

\begin{definition}\label{def2} Let $A$ be as above and $J=J(A)$ be the graded Jacobson radical of $A$. We call $A$ a {\it right generalized Artin-Schelter Gorenstein} algebra of dimension $d$ if

(i) $\injdim A_A=d<\infty$,

(ii) $\uExt^i_A(A/J,A)=0$ for all $i\neq 0$,

(iii) as a left $A$-module $\uExt_A^d(A/J,A)$ is annihilated by $J$, and $\uExt^d_A(A/J,A)$ is invertible as a graded $A/J$-$A/J$-bimodule.

If further, the right global dimension $\rgldim (A)=d$, then we call $A$ a {\it right generalized Artin-Schelter regular} algebra.
\end{definition}

Below, the phase ``Artin-Schelter'' is simply denoted by ``AS'', and ``generalized Artin-Schelter'' by ``GAS''.

\begin{remark}{\rm } The notion of GAS-Gorentein algebras is a slight generalization of the ones in \cite[Definition 1.4]{RR} and \cite[Definition 3.1]{MM}, where the algebras considered are $\mathbb N$-graded.
\end{remark}

Let $\Gamma\colon\gr A\to \gr A$ be the torsion functor, that is, $\Gamma(M)=\{m\in M\mid\dim_\kk(mA)<\infty\}$. The $i$-th right derived functor of $\Gamma$ is denoted by $R^i\Gamma$. By Lemma \ref{lem0}, we have $\Gamma\cong \underset{n\to\infty}\lim\uHom_A(A/J^n,-)$. For $M\in \gr A$, the {\it depth} of $M$ is defined to be
$$\depth(M)=\min\{i\mid R^i\Gamma(M)\neq0\}.$$ Then $\depth(M)$ is either a non-negative integer or $\infty$.
The following lemma is classical for connected graded algebras, and the proof for connected case applies also to our case.

\begin{lemma}\label{lemdf} $\depth(M_A)=\min\{i|\uExt_A^i(A/J,M)\neq0\}$.
\end{lemma}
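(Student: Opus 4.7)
The plan is to show the two quantities agree by proving both inequalities separately, exploiting the description $R^i\Gamma(M)\cong \varinjlim_n \uExt_A^i(A/J^n,M)$, which follows from $\Gamma\cong \varinjlim_n \uHom_A(A/J^n,-)$ as noted right before the statement. Throughout, write $t=\min\{i\mid \uExt_A^i(A/J,M)\neq 0\}$ and $e=\depth(M)$.

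First I would record the key input that lets us pass from $A/J$ to $A/J^n$: since $A/J$ is finite dimensional and semisimple (as a graded ring) by Lemma \ref{lem0}(i), every finitely generated graded $A/J$-module is a finite direct sum of shifts of simple summands of $A/J$. Because $\uExt_A^i(A/J,M)=\bigoplus_k \Ext_{\gr A}^i(A/J,M(k))$ already records every degree shift, vanishing of $\uExt_A^i(A/J,M)$ forces vanishing of $\uExt_A^i(N,M)$ for every finitely generated graded $A/J$-module $N$. Applied to the successive quotients $J^k/J^{k+1}$ of the $J$-adic filtration of $A/J^n$, and chaining the long exact sequences for $0\to J^{k+1}/J^n\to J^k/J^n\to J^k/J^{k+1}\to 0$ by induction on $k$, one concludes: if $\uExt_A^j(A/J,M)=0$ for all $j<i$, then $\uExt_A^j(A/J^n,M)=0$ for all such $j$ and all $n\ge 1$.

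For the inequality $e\ge t$: if $i<t$ then by the paragraph above $\uExt_A^i(A/J^n,M)=0$ for all $n$, so passing to the directed colimit gives $R^i\Gamma(M)=0$, whence $e\ge t$. For the reverse inequality $e\le t$, apply $\uHom_A(-,M)$ to $0\to J/J^n\to A/J^n\to A/J\to 0$; the vanishing $\uExt_A^{t-1}(J/J^n,M)=0$ (obtained from the same filtration argument applied to $J/J^n$) yields an injection $\uExt_A^t(A/J,M)\hookrightarrow \uExt_A^t(A/J^n,M)$ for every $n$, and these injections are compatible with the surjections $A/J^m\twoheadrightarrow A/J^n$ defining the directed system.

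The last piece, which I expect to be the only subtle step, is ruling out that a nonzero element of $\uExt_A^t(A/J,M)$ dies in the colimit. If the image $x_n\in \uExt_A^t(A/J^n,M)$ of a nonzero $x\in\uExt_A^t(A/J,M)$ vanished in $R^t\Gamma(M)$, then there would exist $m\ge n$ with $x_n$ mapping to $0$ in $\uExt_A^t(A/J^m,M)$; but by compatibility this image equals $x_m$, and $x_m\neq 0$ by the injectivity established above, a contradiction. Hence $R^t\Gamma(M)\neq 0$, giving $e\le t$ and completing the proof.
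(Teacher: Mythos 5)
Your proof is correct and is essentially the standard argument; the paper itself gives no proof, stating only that the classical proof for connected graded algebras carries over. You have in effect written out that adaptation: the two non-routine points that distinguish the bounded-below case from the connected case are (a) $A/J$ is graded semisimple and finite dimensional rather than just $\kk$, so the passage from vanishing of $\uExt^i_A(A/J,M)$ to vanishing of $\uExt^i_A(N,M)$ for every finitely generated graded $A/J$-module $N$ requires the observation you make at the start, and (b) one still needs to chase the filtration $J\supseteq J^2\supseteq\cdots$ of $A/J^n$ exactly as in the connected case. You handle both. The final step --- showing that a nonzero class of $\uExt^t_A(A/J,M)$ survives in the colimit by exploiting the injectivity $\uExt^t_A(A/J,M)\hookrightarrow\uExt^t_A(A/J^n,M)$ for each $n$ together with compatibility of the transition maps --- is precisely the right way to rule out the only potential failure, namely a nonzero class becoming zero later in the directed system. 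One small remark: the assertion $R^i\Gamma(M)\cong\varinjlim_n\uExt_A^i(A/J^n,M)$ requires that passing to the filtered colimit commutes with taking cohomology of the Hom-complex on an injective resolution, which is standard since filtered colimits are exact; it is worth knowing that this is where that exactness is used.
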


Recall that $\pi\colon \Gr A\to \QGr A$ has a right adjoint functor $\omega\colon \QGr A\to \Gr A$. The following lemma is clear.

\begin{lemma}\label{lemss} Let $M_A$ be a finitely generated module. If $\depth(M_A)\ge2$, then $\omega\pi(M)\cong M$.
\end{lemma}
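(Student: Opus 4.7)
The plan is to establish the standard Artin--Zhang four-term comparison sequence
\begin{equation*}
0 \to R^0\Gamma(M) \to M \to \omega\pi(M) \to R^1\Gamma(M) \to 0
\end{equation*}
and then invoke the depth hypothesis, which by definition forces $R^0\Gamma(M) = R^1\Gamma(M) = 0$, giving $M \cong \omega\pi(M)$ immediately.

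To construct the sequence, I would apply $\uHom_A(-,M)$ to the short exact sequence $0 \to A_{\ge n} \to A \to A/A_{\ge n} \to 0$, obtaining for each $n$ a four-term exact sequence
\begin{equation*}
0 \to \uHom_A(A/A_{\ge n}, M) \to M \to \uHom_A(A_{\ge n}, M) \to \uExt^1_A(A/A_{\ge n}, M) \to 0.
\end{equation*}
Passing to the direct limit over $n$, the outer terms become $R^0\Gamma(M)$ and $R^1\Gamma(M)$ respectively, after identifying the cofinal systems $\{A/A_{\ge n}\}$ and $\{A/J^n\}$ using Lemma \ref{lem0}(iii) (so that the two presentations of the torsion functor $\Gamma$, one via $A_{\ge n}$ and one via $J^n$, agree). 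The middle map $M \to \omega\pi(M)$ is the unit of the adjunction $\pi \dashv \omega$, and for finitely generated $M$ the third term $\varinjlim_n \uHom_A(A_{\ge n}, M)$ is canonically isomorphic to $\omega\pi(M)$ via the Artin--Zhang description of the section functor as a direct limit of Hom's out of cofinal ``tail'' submodules of $A$.

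The only substantive issue, and hence the main obstacle, is verifying the identification $\omega\pi(M) \cong \varinjlim_n \uHom_A(A_{\ge n}, M)$ in the present bounded-below, locally finite (but not necessarily connected) graded setting. This is a routine adaptation of the classical connected graded argument, using that $A/J$ is finite dimensional by Lemma \ref{lem0}(i) and that $J \supseteq A_{\ge n_0}$ for some $n_0$ by Lemma \ref{lem0}(iii), so that the ``large-degree'' submodules and the powers of $J$ yield the same cofinal system. Once this identification is in hand, plugging $R^0\Gamma(M) = R^1\Gamma(M) = 0$ into the four-term sequence collapses it to the desired isomorphism, and the remainder of the argument is purely formal.
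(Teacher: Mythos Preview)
Your proposal is correct and is precisely the standard Artin--Zhang argument the paper has in mind; the paper itself offers no proof, simply declaring the lemma ``clear'' (with the quotient-category background already delegated to \cite{AZ}). Your care about the bounded-below versus connected setting is well placed but not a real obstruction: the paper has already recorded, just before Lemma~\ref{lemdf}, that $\Gamma\cong\varinjlim\uHom_A(A/J^n,-)$ via Lemma~\ref{lem0}, so the cofinality you need is built into the ambient framework.
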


A useful homological identity in the theory of AS-Gorenstein algebras is Auslander-Buchsbaum formula (cf. \cite[Theorem 3.2]{Jo} for a noncommutative version), which provides an effective way to calculate the depth of a module over a local ring. For our purpose, it will be handful to have a more general version of the Auslander-Buchsbaum formula for right GAS-Gorenstein algeras. We mention that the proof is a modification of that of \cite[Theorem 3.2]{Jo}.

\begin{theorem}[Auslander-Buchsbaum formula]\label{thm0} Let $A$ be a right GAS-Gorenstein algebra, and  $M_A\in\gr A$. Suppose that the projective dimension $\pdim(M_A)<\infty$. Then  $$\pdim(M_A)+\depth(M_A)=\depth(A_A).$$
\end{theorem}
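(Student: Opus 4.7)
The plan is to adapt the classical Auslander--Buchsbaum argument to this noncommutative graded setting by inducting on $n = \pdim(M_A)$, following the general strategy of \cite[Theorem~3.2]{Jo}.

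First I would observe that $\depth(A_A) = d$: by Lemma~\ref{lemdf}, $\depth(A_A) = \min\{i \mid \uExt^i_A(A/J,A) \neq 0\}$, and Definition~\ref{def2}(ii)--(iii) identifies this with $d$, since $\mu := \uExt^d_A(A/J,A)$ is invertible as a graded $A/J$-bimodule and thus nonzero. The target formula therefore reduces to $\pdim(M) + \depth(M) = d$. For the base case $n = 0$, $M$ is a nonzero finitely generated graded projective $P$; writing $P \cong \bigoplus_j e_j A(s_j)$ with nonzero idempotents $e_j \in A_0$, the functor $\uExt^i_A(A/J,-)$ vanishes on summands of copies of $A$ for $i \neq d$. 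For $i = d$, since $J$ contains no nonzero idempotents each $\bar e_j \in A/J$ is nonzero, and invertibility of $\mu$ as an $A/J$-bimodule forces $\bar e_j \cdot \mu \neq 0$, so $\uExt^d_A(A/J,P) \neq 0$ and $\depth(P) = d$.

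For the inductive step $n \geq 1$, I would pick a minimal graded projective cover $0 \to K \to P \to M \to 0$, in which case $\pdim(K) = n - 1$, $\depth(K) = d - n + 1$ by induction, and $K \subseteq PJ$. Feeding this short exact sequence into the long exact sequence for $\uExt^*_A(A/J,-)$ and using the vanishing statement from the base case, one obtains $\uExt^i_A(A/J,M) = 0$ for $i < d - n$ and
\begin{equation*}
\uExt^{d-n}_A(A/J,M) \;\cong\; \ker\bigl( \uExt^{d-n+1}_A(A/J,K) \xrightarrow{\alpha} \uExt^{d-n+1}_A(A/J,P) \bigr).
\end{equation*}
When $n \geq 2$ the target vanishes and the kernel is $\uExt^{d-n+1}_A(A/J,K) \neq 0$; when $n = 1$ both terms are nonzero direct sums of copies of $\mu$, but $\alpha$ is represented by the matrix of $K \hookrightarrow P$ acting on $\mu$-components, and since $K \subseteq PJ$ forces all entries of that matrix to lie in $J$, which annihilates $\mu$ on both sides, one concludes $\alpha = 0$. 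Either way $\depth(M) = d - n$, finishing the induction and yielding $\pdim(M) + \depth(M) = d = \depth(A_A)$.

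The principal obstacle is the $n = 1$ step: verifying that $\alpha$ vanishes requires a careful reading of how the bimodule structure of $\mu$ interacts functorially with maps between projectives, which is the noncommutative substitute for the role played by Nakayama's lemma in the classical local argument. This is also where the hypothesis that $\mu$ be invertible (not merely nonzero) as an $A/J$-bimodule becomes essential, and is the main technical departure from the commutative proof.
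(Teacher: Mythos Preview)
Your argument is correct and takes a genuinely different route from the paper's. The paper proceeds via a spectral sequence: tensoring a minimal projective resolution $Q^\cdot \to M$ with the complex $\uHom_A(P^\cdot,A)$ (for $P^\cdot \to A/J$ a projective resolution) gives $E_2^{rs}=\Tor^A_r(M,\uExt^s_A(A/J,A))\Rightarrow\uExt^{s-r}_A(A/J,M)$, which collapses by the GAS hypotheses to $\uExt^{d-r}_A(A/J,M)\cong Q^{-r}\otimes_A\mu$; minimality together with $J\mu=0$ kills the differentials, and invertibility of $\mu$ forces $Q^{-r}\otimes_A\mu\neq 0$ precisely for $0\le r\le p$. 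Your induction is more elementary, trading the spectral sequence for a case analysis at $n=1$; both proofs rest on the same three inputs (minimality, $J\mu=0$, invertibility of $\mu$), and your long exact sequence is in effect unwinding the paper's $E_2$-page one step at a time. One streamlining worth noting: rather than decomposing projectives as $\bigoplus_j e_jA(s_j)$, observe the natural isomorphism $\uExt^d_A(A/J,Q)\cong Q\otimes_A\mu$ for projective $Q$ (via $\uHom_A(P^\cdot,Q)\cong Q\otimes_A\uHom_A(P^\cdot,A)$ and flatness of $Q$). This makes the base case immediate---$(P/PJ)\otimes_{A/J}\mu\neq 0$ by graded Nakayama and invertibility---and identifies $\alpha$ with $\iota\otimes_A 1_\mu$ for the inclusion $\iota\colon K\hookrightarrow P$, which vanishes since $\im\iota\subseteq PJ$ and $J\mu=0$; this avoids any appeal to the fine structure of graded projectives.
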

\begin{proof} If $\injdim A_A=0$, then it is clear. Assume $\injdim(A_A)=d\ge1$ and $\pdim(M_A)=p$. Take a graded projective resolution of the right module $A/J$:
$$\cdots\to P^{-2}\to P^{-1}\to P^0\to A/J\to0,$$  where each $P^i$ is finitely generated for all $i$. Applying $\uHom_A(-,A)$ to the resolution, we obtain the following sequence:
\begin{equation}\label{seq1}
  0\longrightarrow\uHom_A(P^0,A)\longrightarrow\uHom_A(P^{-1},A)\longrightarrow\uHom_A(P^{-2},A)\longrightarrow\cdots.
\end{equation}
Take a minimal graded projective resolution
\begin{equation}\label{seq2}
  0\to Q^{-p}\to\cdots\to Q^{-1}\to Q^0\to M\to0
\end{equation}
of $M$. By taking the tensor product of (\ref{seq2}) and (\ref{seq1}) we obtain a double complex
{\small$$\xymatrix{
\vdots\ar[d]&\vdots\ar[d]&\vdots\ar[d]& \\
Q^{-2}\otimes_A\uHom_A(P^0,A)\ar[d] \ar[r]&Q^{-2}\otimes_A\uHom_A(P^{-1},A)\ar[d] \ar[r]&Q^{-2}\otimes_A\uHom_A(P^{-2},A)\ar[d] \ar[r]&\cdots\\
  Q^{-1}\otimes_A\uHom_A(P^0,A)\ar[d] \ar[r] & Q^{-1}\otimes_A\uHom_A(P^{-1},A)\ar[d] \ar[r]&Q^{-1}\otimes_A\uHom_A(P^{-2},A)\ar[d] \ar[r]&\cdots \\
  Q^0\otimes_A\uHom_A(P^0,A) \ar[r] & Q^0\otimes\uHom_A(P^{-1},A)\ar[r]& Q^0\otimes\uHom_A(P^{-2},A)\ar[r]&\cdots,  }$$}
which implies a convergent spectral sequence
\begin{equation}\label{spec1}
  E^{rs}_2=\Tor^A_r(M,\uExt^s_A(A/J,A))\Longrightarrow \uExt_A^{s-r}(A/J,M).
\end{equation}

By assumption, $\uExt^s_A(A/J,A)=0$ for $s\neq d$ and $\uExt^d_A(A/J,A)\cong V$ for some graded invertible $A/J$-$A/J$-bimodule $V$. Thus the spectral sequence (\ref{spec1}) collapses at the second level. Since the resolution (\ref{seq2}) is minimal, we have $\Tor^A_s(M,V)\cong Q^{-s}\otimes_A V$, and hence $\Tor^A_s(M,V)\neq 0$ for each $0\leq s\leq p$ for $V$ is invertible. Therefore, $\uExt_A^{d-p}(A/J,M)\neq0$ and $\uExt_A^i(A/J,M)=0$ for all $i<d-p$. By Lemma \ref{lemdf}, $\depth(M)=d-p$. The Auslander-Buchsbaum formula follows.
\end{proof}

\section{Noncommutative resolutions}\label{sectnr}
Noncommutative crepant resolutions for commutative Gorenstein algebras were introduced by Van den Bergh (cf. \cite{vdB}). Later, Qin-Wang-Zhang introduced noncommutative quasi-resolutions for noncommutative Auslander Gorenstein algebra in \cite{QWZ}.  In this section, we will modify the definition of noncommutative resolutions of \cite{QWZ} and give a weaker version of noncommutative resolutions for noncommutative isolated singularities.

Let $A$ be a right noetherian graded algebra which is bounded below and locally finite. Recall that $A$ is called a {\it noncommutative isolated singularity} (cf. \cite{Ue}),
if $\qgr A$ has finite global dimension, i.e., there is an integer $n_0\ge0$ such that $\Ext_{\qgr A}^i(\mathcal{M},\mathcal{N})=0$ for all $i>n_0$ and $M,N\in \gr A$.

Recall that a finitely generated right graded $A$-module $M$ is said to be {\it small} if the following conditions are satisfied:

(i) $\uEnd_A(M)$ is a right noetherian graded algebra, and

(ii) $\uHom_A(M,N)$ is a finitely generated right graded $\uEnd_A(M)$-module for any $N\in\gr(A)$ .

Let $M_A$ be small and let $B=\uEnd_A(M)$. Since $M$ is finitely generated and $A$ is bounded below and locally finite, $B$ is also bounded below and locally finite, and we have an additive functor $$F=\uHom_A(M,-)\colon\gr A\longrightarrow\gr B.$$

If $K$ is a finite dimensional right graded $A$-module, then $\uHom_A(M,K)$ is a finite dimensional right graded $B$-module. For $X,Y\in \gr A$, let $f\colon X\to Y$ be a homomorphism such that both $\ker f$ and $\coker f$ are finite dimensional. Let $f_*=\uHom_A(M,f)$. It is not hard to see both $\ker f_*$ and $\coker f_*$ are finite dimensional. Therefore, the functor $F$ induces a functor $\mathcal{F}\colon\qgr A\longrightarrow\qgr B$ which fits into the following commutative diagram
\begin{equation}\label{diag1}
  \xymatrix{
  \gr A \ar[d]_{\pi} \ar[r]^{F} & \gr B \ar[d]^{\pi} \\
  \qgr A \ar[r]^{\mathcal{F}} & \qgr B.   }
\end{equation}

Suppose that $A$ has finite injective dimension $\injdim A_A=d$. Recall that a finitely generated right graded $A$-module $M$ is called a {\it maximal Cohen-Macaulay module} (MCM module, for simplicity) if $R^i\Gamma(M)=0$ for all $i\neq d$.

\begin{definition}\label{def3} {\rm Let $A$ be a right noetherian graded algebra which is bounded below and locally finite with injective dimension $\injdim A_A=d<\infty$. If there is a small MCM module $M_A$ such that

{\rm(i)} right global dimension $\rgldim (B)=d$,

{\rm(ii)} the functor $\mathcal{F}$, as in the diagram (\ref{diag1}), is an equivalence,

\noindent then we call $B$ a {\it right pre-resolution} of $A$.

If further, $B$ is right GAS-regular, then we call $B$ a {\it right quasi-resolution} of $A$.
}
\end{definition}

\begin{remark}(1) A right noetherian graded algebra which admits a right pre-resolution is automatically a noncommutative isolated singularity. This follows from the well-known fact that the global dimension of $\qgr B$ is no greater than the one of $\gr B$, see for instance \cite[Section 7]{AZ}.

(2) The above definition is a modification of \cite[Definition 0.5]{QWZ} for noncommutative isolated singularities, where the algebra $B$ is assumed to be a (both left and right) noetherian Auslander regular $\mathbb N$-graded algebra. We will show some examples of right pre-resolutions and right quasi-resolutions for right GAS-Gorenstein algebras in the subsequent sections. We consider non-positively graded algebras because a noncommutative resolution of a noetherian algebra may not positively graded in general.
\end{remark}

A right noetherian graded locally finite algebra with finite right injective dimension is said to be {\it CM-finite} if it has, up to degree shifts, finitely many nonisomorphic indecomposable MCM modules.

Our main result of this section is as follows. It may be viewed as a noncommutative version of \cite[Theorem 1.5]{IW} in dimension 2 case.

\begin{theorem} \label{thm3} Let $A$ be a right noetherian graded algebra which is bounded below and locally finite with injective dimension $\injdim A_A=2$. Assume that $A$ has a right quasi-resolution. Then

{\rm(i)} $A$ is CM-finite;

{\rm(ii)} any two right quasi-resolutions of $A$ are graded Morita equivalent.
\end{theorem}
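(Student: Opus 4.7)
The plan is to show that the functor $F := \uHom_A(M,-) \colon \gr A \to \gr B$ associated to the given quasi-resolution $B = \uEnd_A(M)$ restricts to a fully faithful functor from the MCM modules over $A$ to the finitely generated projective $B$-modules, with essential image hitting every indecomposable projective up to degree shift. Granting this, part (i) follows at once: $B_0 = \End_{\gr A}(M)$ is finite-dimensional (since $M$ is finitely generated over the bounded-below locally finite algebra $A$), so $B$ admits only finitely many indecomposable graded projectives up to shift, namely the $F(N_i)$ for the finitely many indecomposable summands $N_i$ of $M$; hence the indecomposable MCM $A$-modules are also finite in number up to shift.

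The heart of the argument is the claim that $F(N) \in \gr B$ is projective for each MCM module $N$. Since $B$ is right GAS-regular of dimension $2$, Lemma \ref{lemdf} gives $\depth B_B = 2$ and $\pdim_B F(N) \leq \rgldim B = 2 < \infty$; the Auslander--Buchsbaum formula (Theorem \ref{thm0}) therefore reduces the claim to $\depth_B F(N) \geq 2$, i.e.\ to $F(N) \cong \omega\pi F(N)$. Since $\injdim A_A = 2$ and $N$ is MCM, one has $\depth N \geq 2$, and hence $N \cong \omega\pi N$ by Lemma \ref{lemss}. Because $F$ is compatible with degree shifts and satisfies $F(M) = B$, the induced equivalence $\mathcal{F}$ commutes with shifts and obeys $\mathcal{F}(\pi M) = \pi B$. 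Using the $(\pi,\omega)$-adjunctions on both sides, I obtain the chain
\[
F(N) \cong F(\omega\pi N) \cong \bigoplus_{k \in \mathbb{Z}}\Hom_{\qgr A}(\pi M, \pi N(k)) \cong \bigoplus_{k \in \mathbb{Z}}\Hom_{\qgr B}(\pi B, \pi F(N)(k)) \cong \omega\pi F(N).
\]
The same argument applied to a pair $N, N'$ of MCM modules, combined with the analogous identity $F(N') \cong \omega\pi F(N')$ on the $B$-side, yields $\uHom_A(N, N') \cong \uHom_B(F(N), F(N'))$, so $F$ is fully faithful on $\MCM(A)$ and in particular preserves indecomposability. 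Thus every indecomposable MCM $N$ has $F(N)$ indecomposable projective, so isomorphic to some $F(N_i)(k) = F(N_i(k))$, whence $N \cong N_i(k)$ in $\gr A$.

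For part (ii), let $B_j = \uEnd_A(M_j)$, $j = 1, 2$, be two right quasi-resolutions. Applying the argument above to each shows that every indecomposable MCM of $A$ appears as a summand of both $M_1$ and $M_2$ up to shift, so $M_1 \in \add\{M_2(k) : k \in \mathbb{Z}\}$ and conversely. The bimodules $P := \uHom_A(M_2, M_1)$ and $Q := \uHom_A(M_1, M_2)$ are therefore finitely generated graded progenerators on both sides, and the composition pairings $P \otimes_{B_2} Q \to B_1$ and $Q \otimes_{B_1} P \to B_2$ are isomorphisms by the standard Yoneda check on the additive generators $M_1$ and $M_2$; this is exactly a graded Morita equivalence between $B_1$ and $B_2$. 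The main technical obstacle I foresee is the careful bookkeeping around the equivalence $\mathcal{F}$, the right adjoints $\omega$ on both sides, and the degree shifts in the central chain of isomorphisms above; the smallness hypothesis on $M$ is essential to guarantee that $F(N)$ is a genuine finitely generated object of $\gr B$ so that these comparisons make sense.
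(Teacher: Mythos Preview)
Your proof is correct and follows essentially the same route as the paper. Both arguments hinge on showing that $F(N)=\uHom_A(M,N)$ has depth $\geq 2$ over $B$ for every MCM module $N$, then invoke the Auslander--Buchsbaum formula to conclude $F(N)$ is projective, and finally pull back through $\omega\pi$ to obtain $N\in\add(M)$; the paper carries out the depth computation via the explicit exact sequence arising from $0\to J^n\to B\to B/J^n\to 0$, whereas you package the same information as the single identity $F(N)\cong\omega\pi F(N)$ together with the standard fact that objects in the image of $\omega$ have depth $\geq 2$.
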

\begin{proof} (i) By assumption, there exists some small MCM module $M_A$ such that $B=\uEnd_A(M)$ is a right quasi-resolution of $A$. Consider the functor $F=\uHom_A(M,-)\colon \gr A\to \gr B$ and the induced functor $\mathcal F\colon \qgr A\to \qgr B$. Then we have commutative diagram (\ref{diag1}).
Take an MCM $A$-module $N$, and let $X=\uHom_A(M,N)$. Then $X\in\gr B$. Since $\mathcal{F}$ is an equivalence, we have
$$\Hom_{\qgr A}(\pi(M),\pi(N)(k))\cong \Hom_{\qgr B}(\mathcal{F}(\pi(M)),\mathcal{F}(\pi(N)(k))),\text{ for all } k\in\mathbb Z.$$
By definition
\begin{eqnarray*}
   \Hom_{\qgr B}(\mathcal{F}(\pi(M)),\mathcal{F}(\pi(N)(k)))  &=& \Hom_{\qgr B}(\pi(\uHom_A(M,M)),\pi(\uHom_A(M,N))(k)) \\
     &=& \Hom_{\qgr B}(\pi(B),\pi(X)(k)).
  \end{eqnarray*}

Let $J$ be the graded Jacobson radical of $B$. Since $B/J$ is finite dimensional and $X$ is finitely generated, we have
$$\Hom_{\qgr B}(\pi(B),\pi(X)(k))=\underset{n\to\infty}\lim\Hom_{\gr B}(J^n,X(k)).$$
From the exact sequence $0\to J^n \to B\to B/J^n\to 0$ we obtain the following exact sequence
\begin{eqnarray}\label{seq3}
\nonumber 0\longrightarrow\underset{n\to\infty}\lim\Hom_{\gr B}(B/J^n,X(k))\longrightarrow&\Hom_{\gr B}(B,X(k))\longrightarrow\underset{n\to\infty}\lim\Hom_{\gr B}(J^n,X(k))\\
&\longrightarrow\underset{n\to\infty}\lim\Ext^1_{\gr B}(B/J^n,X(k))\longrightarrow0.
\end{eqnarray}
By the commutative diagram (\ref{diag1}), we have the following commutative diagram
\begin{equation}\label{diag2}
  \xymatrix{
    \Hom_{\gr B}(B,X(k))\ar[r]&\underset{n\to\infty}\lim\Hom_{\gr B}(J^n,X(k)) \\
    \Hom_{\gr B}(B,X(k))\ar[r]\ar[u]_{=}&\Hom_{\qgr B}(\pi(B),\pi(X)(k))\ar[u]_{=}\\
    \Hom_{\gr A}(M,N(k)) \ar[r]\ar[u]_{F} & \Hom_{\qgr A}(\pi(M),\pi(N)(k))\ar[u]_{\mathcal{F}}.   }
\end{equation}
Note that $X_k=\uHom_A(M,N)(k)=\Hom_{\gr A}(M,N(k))$. It follows that the maps in the left column of the diagram (\ref{diag2}) are bijective. Since $\mathcal{F}$ is an equivalence, the maps in the right column are bijective. Since $N$ is MCM, then $\depth(N)=2$. It follows from Lemma \ref{lemss} that $\Hom_{\qgr A}(\pi(M),\pi(N)(k))\cong\Hom_{\gr A}(M,N(k))$. Therefore the bottom map in the diagram (\ref{diag2}) is also bijective. Hence the top map in the diagram (\ref{diag2}) is an isomorphism.

By the exact sequence (\ref{seq3}), we have, for all $k\in\mathbb Z$, $\underset{n\to\infty}\lim\Hom_{\gr B}(B/J^n,X(k))=0$ and $\underset{n\to\infty}\lim\Ext^1_{\gr B}(B/J^n,X(k))=0$. Hence $\Gamma(X)=R^1\Gamma(X)=0$. It follows that $\depth(X)\ge2$. Since  $\rgldim(B)=\injdim A_A=2$, $X$ is a projective $B$-module by Auslander-Buchsbaum formula (cf. Lemma \ref{lem0}). Hence $X\in \add(B)$, where $\add(B)$ is the category of direct summands of direct sums of finite copies of degree shifts of $B$. Therefore $\pi(X)\in \add(\pi(B))$ and hence $\pi(N)\in \add(\pi(M))$ for $\mathcal{F}$ is an equivalence. Since both $M$ and $N$ are MCM, it follows that $\omega\pi(M)\cong M$ and $\omega\pi(N)\cong N$ (cf. Lemma \ref{lemss}). Hence $N\in \add(M)$, which implies that every indecomposable MCM module is a direct summand of $M$ (up to a degree shift). Hence $A$ is CM-finite.

(ii) Suppose that $N'$ is another small MCM module such that $B'=\uEnd_A(M)$ is a right quasi-resolution of $A$. By the proof of (i), $N'\in\add(M)$ and $M\in \add(N')$. Therefore $\add(M)=\add(N')$. Hence $B$ and $B'$ are graded Morita equivalent.
\end{proof}
\begin{remark} In \cite{QWZ}, the authors extended the theory of noncommutative crepant resolutions for commutative algebras to noncommutative settings. They proved that noncommutative quasi-resolutions for a noetherian $\mathbb N$-graded algebra with Gelfand-Kirillov dimension 2 are always Morita equivalent (cf. \cite[Theorem 0.6(1)]{QWZ}), which extensively generalizes a similar result in commutative case (cf. \cite[Theorem 1.5]{IW}). We remark that a noncommutative quasi-resolution $B$ in \cite{QWZ} is assumed to be (left and right) noetherian $\mathbb{N}$-graded Auslander regular and Cohen-Macaulay. In contrast, we assume that the resolution $B$ is right GAS-regular. Moreover, the method we used are different from that of \cite{QWZ}.
\end{remark}

\section{Endomorphism rings of CM-finite AS-Gorenstein algebras}
In this section, $A$ is an AS-Goresntein algebra. We will show that the endomorphism ring of an MCM module over a noncommutative isolated singularity is always right noetherian, which suggests the existence of resolutions for noncommutative isolated singularities.

\begin{lemma}\label{lem1} Let $A$ be an AS-Gorenstein algebra which is a noncommutative isolated singularity. Let $M_A$ be an MCM module, and let $N_A$ be a finitely generated graded $A$-module. Then $\SHom_A(M,N)$ is finite dimensional.
\end{lemma}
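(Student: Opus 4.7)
The plan is to rewrite $\SHom_A(M,N)$ as a classical Ext group by replacing $N$ with a syzygy, and then to exploit the noncommutative isolated singularity hypothesis to bound that Ext group.

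For the first step, choose a surjection $P^0 \twoheadrightarrow N$ with $P^0$ a finitely generated graded projective $A$-module, and let $\Omega N$ denote its kernel. Applying $\uHom_A(M, -)$ to $0 \to \Omega N \to P^0 \to N \to 0$ and using that MCM modules over the AS-Gorenstein algebra $A$ satisfy $\uExt^i_A(M, A) = 0$ for all $i \geq 1$ by local duality (so in particular $\uExt^1_A(M, P^0) = 0$), we obtain the exact sequence
$$\uHom_A(M, P^0) \xrightarrow{\phi} \uHom_A(M, N) \to \uExt^1_A(M, \Omega N) \to 0.$$
The image of $\phi$ is clearly contained in $\uPHom_A(M, N)$. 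Conversely, if $f\colon M \to N$ factors through a projective as $M \xrightarrow{g} Q \xrightarrow{h} N$, then by projectivity of $Q$ the composition $h$ lifts along $P^0 \twoheadrightarrow N$ to a map $Q \to P^0$, so $f$ itself factors through $P^0$. Hence $\im\phi = \uPHom_A(M, N)$, and we conclude
$$\SHom_A(M, N) \cong \uExt^1_A(M, \Omega N).$$

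Iterating the same argument yields $\SHom_A(M, N) \cong \uExt^i_A(M, \Omega^i N)$ for every $i \geq 1$. Taking $i = d := \injdim A_A$, the syzygy $\Omega^d N$ is MCM: its depth is at least $d$ by repeated application of the depth inequality to the short exact sequences $0 \to \Omega^{j+1} N \to P^{-j} \to \Omega^j N \to 0$, while depth cannot exceed $d$ since $R^i\Gamma$ vanishes for $i > d$. The problem therefore reduces to proving that $\uExt^d_A(M, N')$ is finite dimensional whenever $M$ and $N'$ are both MCM. A minimal projective resolution of $M$ by finitely generated projectives presents $\uExt^d_A(M, N')$ as a subquotient of $\uHom_A(P^{-d}, N')$, which is a finite direct sum of degree shifts of $N'$; hence it is automatically bounded below and locally finite, and it suffices to prove it is torsion in $\qgr A$.

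Here the isolated singularity hypothesis enters. Since $\qgr A$ has finite global dimension, and both $M$ and $N'$ are MCM so that the local cohomology of $\uHom_A(M, N')$ is controlled by local duality for AS-Gorenstein algebras, one should obtain $\pi\uExt^i_A(M, N') = 0$ in $\qgr A$ for every $i \geq 1$. A bounded below, locally finite, torsion graded module is automatically finite dimensional, completing the argument. The main obstacle is precisely this last implication: converting finite global dimension of $\qgr A$ into the vanishing of $\pi\uExt^i_A(M, N')$ between MCM modules. This is the noncommutative graded analog of the classical fact that over a commutative Gorenstein isolated singularity, $\Ext^{\geq 1}$ between MCM modules has finite length, and its verification will likely require combining the local-to-global spectral sequence for $\uHom_A(M,N')$ with AS-Gorenstein local duality.
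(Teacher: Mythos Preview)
Your reduction is correct and efficient: dimension-shifting in the second variable via syzygies of $N$ gives $\SHom_A(M,N)\cong\uExt^i_A(M,\Omega^iN)$ for every $i\geq1$, exactly as you argue. The gap is that you stop at $i=d$ and then try to prove directly that $\uExt^d_A(M,N')$ is torsion for $N'$ MCM, which you correctly identify as unfinished. But the MCM property of $\Omega^dN$ is a red herring; you simply stopped one step too early. Taking $i=d+1$ instead yields $\SHom_A(M,N)\cong\uExt^{d+1}_A(M,\Omega^{d+1}N)$, and the standard characterisation of graded isolated singularities (this is exactly what the paper cites as \cite[Proposition 4.3]{SvdB}) says that for an AS-Gorenstein algebra of injective dimension $d$ which is an isolated singularity, $\uExt^i_A(X,Y)$ is finite dimensional for all finitely generated $X,Y$ and all $i>d$. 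No spectral sequence or local duality argument is needed at this point.

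For comparison, the paper's proof is the mirror image of yours: instead of taking syzygies of $N$, it uses that $M$ is MCM to build a \emph{coresolution} $0\to M\overset{\tau}{\to}P^{-d}\to\cdots\to P^0\to P^1\to\cdots$ by finitely generated projectives, sets $X=\im\partial^0$ so that $M=\Omega^{d+1}X$, and obtains $\SHom_A(M,N)\cong\uExt^{d+1}_A(X,N)$ by dimension-shifting in the first variable. Both routes land on an $\uExt^{d+1}$ between finitely generated modules, where the isolated singularity hypothesis applies directly. Your approach has the minor advantage of not needing the existence of a complete resolution of $M$; the paper's has the advantage of keeping $N$ fixed.
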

\begin{proof} Assume that the injective dimension $\injdim(A_A)=d$. Since $M$ is an MCM module, there is an exact sequence $$0\to M\overset{\tau}\to P^{-d}\overset{\partial^{-d}}\to P^{-d+1}\to\cdots\to P^0\overset{\partial^0}\to P^1\to\cdots,$$ where $P^i$ is a finitely generated graded projective $A$-module for all $i\ge -d$. Let $X=\im\partial^0$. Then $\uExt^{d+1}_A(X,N)=\uHom_A(M,N)/\im(\tau^*)$, where $\tau^*\colon \uHom_A(P^{-d},N)\to\uHom_A(M,N)$ is the induced map. Moreover $\im(\tau^*)=\uPHom_A(M,N)$ since $M$ is an MCM module. Hence $\uExt^{d+1}_A(X,N)=\uHom_A(M,N)/\im(\tau^*)=\SHom_A(M,N)$. By assumption $A$ is a noncommutative isolated singularity, it follows that $\uExt^{d+1}_A(X,N)$ is finite dimensional (cf. \cite[Proposition 4.3]{SvdB}).
\end{proof}

We fix a notation which will be used in the proof of the next theorem. For an integer $i$ and a graded $A$-module $X$, we define a graded homomorphism $s^i\colon X(i)\to X$ by setting $s^i(x)=x$ for all $x\in X$. Then $s^i$ is a graded homomorphism of degree $i$.

\begin{theorem}\label{thm1} Let $A$ be an AS-Gorenstein algebra which is a noncommutative isolated singularity. Let $M_A$ be an MCM module. Then $\uEnd_A(M)$ is a right noetherian graded algebra.
\end{theorem}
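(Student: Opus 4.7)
The plan is to reduce right noetherianness of $B = \uEnd_A(M)$ to a claim about $M^* = \uHom_A(M, A)$ as a right $B$-module, which in turn can be addressed via the stable-$\Hom$ finiteness proved in Lemma~\ref{lem1}. First, from the short exact sequence of right $B$-modules
\[
0 \longrightarrow K \longrightarrow B \longrightarrow \SHom_A(M, M) \longrightarrow 0,
\]
with $K = \uPHom_A(M, M)$, together with the fact (Lemma~\ref{lem1}) that $\SHom_A(M, M)$ is finite dimensional, it suffices to prove $K$ is noetherian as a right $B$-module. I would identify $K$ with the image of the $(B, B)$-bimodule map
\[
\mu \colon M \otimes_A M^* \longrightarrow B, \qquad m \otimes \phi \longmapsto \bigl( x \mapsto m\phi(x) \bigr),
\]
whose image is exactly $K$: since $M$ is finitely generated, any factorization through a graded projective can be realized through a finite direct sum of shifts of $A$. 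Choosing a surjection $A^r \twoheadrightarrow M$ from a finite generating set yields a right $B$-linear surjection $(M^*)^r \cong A^r \otimes_A M^* \twoheadrightarrow K$, so the problem reduces to proving that $M^*$ is a noetherian right $B$-module.

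For this central claim, consider an ascending chain $N_1 \subseteq N_2 \subseteq \cdots$ of right $B$-submodules of $M^*$. Dualizing the surjection $A^r \twoheadrightarrow M$ embeds $M^* \hookrightarrow A^r$ as left $A$-modules, so $M^*$ is finitely generated as a left $A$-module over the left noetherian $A$; hence the chain of left $A$-submodules $A\cdot N_i \subseteq M^*$ stabilizes at some $L \subseteq M^*$. For $i$ large enough, every $N_i$ lies in $L$ and $A$-generates it. To conclude that the $N_i$ themselves stabilize, one would iterate Lemma~\ref{lem1}: using the MCM coresolution $0 \to M \to P^{-d} \to P^{-d+1} \to \cdots$, each successive quotient $N_{i+1}/N_i$ becomes controlled by a finite-dimensional stable-$\Hom$ group attached to the cosyzygies, forcing the chain to terminate.

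The main obstacle lies precisely in this last step. The left $A$-action and the right $B$-action on $M^*$ do not naturally combine into a module over a single noetherian ring---only the central part of $A$ embeds into $B$---so left $A$-noetherianness of $M^*$ does not transfer directly to right $B$-noetherianness. Overcoming this requires carefully engineering a filtration of $M^*$ whose successive subquotients are finite dimensional (via repeated application of Lemma~\ref{lem1}), and then checking that the filtration is compatible with the graded right $B$-module structure. This filtration argument, and its interaction with the MCM coresolution of $M$, is the technical heart of the proof.
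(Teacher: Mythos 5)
There is a genuine gap. Your chain of reductions is sound as far as it goes: $B$ sits in a short exact sequence $0\to K\to B\to \SHom_A(M,M)\to 0$ of right $B$-modules with $\SHom_A(M,M)$ finite dimensional by Lemma~\ref{lem1}, and $K=\uPHom_A(M,M)$ is a right $B$-module quotient of $(M^\vee)^r$ via $A^r\twoheadrightarrow M$; so right noetherianness of $B$ would indeed follow from $M^\vee$ being a \emph{noetherian} right $B$-module. But you then observe, correctly, that the left $A$-structure on $M^\vee$ does not control the right $B$-structure, gesture at an unspecified filtration of $M^\vee$ with finite-dimensional subquotients ``obtained by iterating Lemma~\ref{lem1} along a coresolution,'' and stop. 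No such filtration is constructed, and none is obviously available: $M^\vee$ is typically infinite-dimensional and has no reason to admit a finite-length $B$-filtration with finite-dimensional layers. In effect you have traded the goal ``$B$ is right noetherian'' for the goal ``$M^\vee$ is a noetherian right $B$-module,'' and the latter is at least as hard as the former. (Indeed, in the paper this direction is reversed: Corollary~\ref{cor1}, that $\uHom_A(M,N)$ is finitely generated over $B$, is extracted from the proof of Theorem~\ref{thm1}, not used as an input to it.) So the ``technical heart'' you identify is precisely the missing step, and as written the proof does not close.

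For comparison, the paper's argument does not pass through $M^\vee$ at all. Given a graded right ideal $I\subseteq B$, it forms the $A$-submodule $IM=\sum_{b\in I}b(M)\subseteq M$; since $A$ is noetherian and $M$ is finitely generated, $IM=\sum_{i=1}^n b_i(M)$ for finitely many $b_i\in I$, yielding a degree-zero epimorphism $\phi\colon\bigoplus_i M(-k_i)\twoheadrightarrow IM$. For $b'\in I$ one then argues in two cases: if $b'$ (viewed in $\uHom_A(M,IM)$) factors through a graded projective, one lifts that factorization along the epimorphism $\phi$ to write $b'=\sum b_i h_i'$ with $h_i'\in B$; if not, one uses that the image of $I$ in $\SHom_A(M,IM)$ is finite dimensional (Lemma~\ref{lem1}) to adjoin finitely many further generators $f_1,\dots,f_m$ so that $b'-\sum l_jf_j$ falls back into the first case. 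This is a direct finite-generation argument for $I$, avoiding the intermediate claim you were unable to establish.
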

\begin{proof} Set $B=\uEnd_A(M)$. Let $I$ be a graded right ideal of $B$ and let $S$ be a finite set consisting of homogeneous elements of $I$. We write $M^S=\sum_{b\in S} b(M)$. Then $M^S$ is a submodule of $M$. Consider the set $$\mathcal{X}=\{M^S|S\text{ finite set of homogeneous elements of }I\}.$$ Since $A$ is noetherian and $M$ is finitely generated, the set $\mathcal{X}$ has a maximal object. Let $M^{S_0}$ be a maximal object in $\mathcal{X}$. For every element $b'\in I$, consider the set $S_1=S_0\cup\{b'\}$. Since $M^{S_1}\supseteq M^{S_0}$, and by assumption $M^{S_0}$ is maximal, it follows $M^{S_1}=M^{S_0}$. Hence $b'(M)\subseteq \sum_{b\in S_0}b(M)$ and $M^{S_0}=IM$. Set $N=IM=M^{S_0}$.

Assume $S_0=\{b_1,\dots,b_n\}$, and the degrees of $b_1,\dots,b_n$ are $k_1,\dots,k_n$ respectively. Let $$\phi\colon M(-k_1)\oplus M(-k_2)\oplus\cdots \oplus M(-k_n)\longrightarrow N$$ be the homomorphism defined by the homomorphisms $\{b_1s^{-k_1},\dots, b_ns^{-k_n}\}$. Then $\phi$ is a homomorphism of degree 0.

We next prove that the right ideal $I$ is finitely generated. There two different situations.

Case 1. For $b'\in I$, assume the degree of the homomorphism $b'$ is $k$, and assume that the composition $M(-k)\overset{s^{-k}}\to M\overset{b'}\to N$ factors through a graded projective module, that is, there is a graded projective module $P$ such that $b's^{-k}$ is the composition $M(-k)\overset{r}\to P\overset{t}\to N$, where both $r$ and $t$ are homomorphisms of degree 0. Since $\phi$ is an epimorphism, there is a homomorphism $f\colon P\to N$ such that $t=\phi f$. Then $b's^{-k}=tr=\phi f r$. Let $h=fr$. Then $h$ is a morphism from $M(-k)$ to $M(-k_1)\oplus M(-k_2)\oplus\cdots \oplus M(-k_n)$. Let $h'\colon M\to M(-k_1)\oplus M(-k_2)\oplus\cdots \oplus M(-k_n)$ be the homomorphism such that $h=h's^{-k}$. Then $b'=\phi h'$. Let $p_i\colon M(-k_1)\oplus M(-k_2)\oplus\cdots \oplus M(-k_n)\to M(-k_i)$ be the projection map. Then $b'=\sum_{i=1}^n(b_is^{-k_i})(p_ih')$. For each $i$, let $h'_i=s^{-k_i}p_ih'$. Then $h'_i$ is an endomorphism of $M$. Hence, in this case, $b'=\sum_{i=1}^nb_ih'_i\in\sum_{i=1}^nb_iB$.

Case 2. Assume $b'$ does not factor through any projective modules. Since $A$ is a noncommutative isolated singularity, $\SHom_A(M,N)$ is finite dimensional by Lemma \ref{lem1}. Note that $\im(b)=bM\subseteq N$ for any $b\in I$, thus we may view $b$ as an element in $\Hom_A(M,N)$ and identify $I$ with a subspace of $\Hom_A(M,N)$. Then
\[\bar I= I/(I\cap\uPHom_A(M,N))\cong (I+\uPHom_A(M,N))/\uPHom_A(M,N)\] is finite dimensional, for the latter is a subspace of $\SHom_A(M,N)$. Choose homogeneous elements $f_1,\dots,f_m\in I$ such that their images $\overline{f}_1,\dots,\overline{f}_m$ in $\bar I$ form a basis of $\bar I$. Let $\overline{b'}$ be the image of $b'$ in the quotient space $\bar I$. Since $\overline{f}_1,\dots,\overline{f}_m$ is a basis, we may write $\overline{b'}=l_1\overline{f}_1+\cdots+l_m\overline{f}_m$ for some $l_1,\dots,l_m\in\kk$. Then $b'-(l_1f_1+\cdots+l_mf_m)\in I\cap \uPHom_A(M,N)$. By Case 1, $b'-(l_1f_1+\cdots+l_mf_m)=\sum_{i=1}^nb_ig_i$ for some $g_1,\dots,g_n\in B$. It follows that $b'=l_1f_1+\cdots+l_mf_m+\sum_{i=1}^nb_ig_i$.

Summarizing, the right ideal $I$ is generated by $f_1,\dots,f_m,b_1,\dots,b_n$.
\end{proof}

The proof of Theorem \ref{thm1} also implies the following result.

\begin{corollary} \label{cor1} Retain the same notations as in Theorem \ref{thm1}. Let $N$ be a finitely generated right graded $A$-module. Then $\uHom_A(M,N)$ is a finitely generated right graded $\uEnd_A(M)$-module.
\end{corollary}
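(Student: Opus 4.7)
The plan is to emulate the proof of Theorem \ref{thm1} almost verbatim, replacing the right ideal $I \subseteq B$ there by $X := \uHom_A(M,N)$ (viewed as a graded right $B$-module) and replacing the submodule $IM \subseteq M$ by the submodule $N_0 := \sum_{b \in X} b(M) \subseteq N$. Because $N$ is noetherian as a finitely generated module over the right noetherian ring $A$, the same maximal-object argument as in Theorem \ref{thm1} produces finitely many homogeneous elements $b_1, \dots, b_n \in X$ of degrees $k_1, \dots, k_n$ such that $N_0 = \sum_{i=1}^n b_i(M)$, and by maximality every $b \in X$ satisfies $b(M) \subseteq N_0$. In particular $\uHom_A(M,N_0) = \uHom_A(M,N) = X$. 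I would then form the degree-$0$ surjection $\phi = (b_1 s^{-k_1}, \dots, b_n s^{-k_n}) \colon \bigoplus_{i=1}^n M(-k_i) \twoheadrightarrow N_0$.

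The key adjustment relative to Theorem \ref{thm1} is to apply Lemma \ref{lem1} to the finitely generated submodule $N_0$ rather than to $N$; this yields that $\SHom_A(M,N_0)$ is finite-dimensional, and I pick homogeneous $f_1,\dots,f_m \in X$ whose classes form a basis. For any homogeneous $b' \in X$ of degree $k'$, decompose $b' = \sum_j l_j f_j + h$ with $l_j \in \kk$ and $h \in \uPHom_A(M, N_0)$. By the very definition of $\uPHom_A(M,N_0)$, $h$ factors as $h s^{-k'} = tr$ for some graded projective $P$, $r \colon M(-k') \to P$, and $t \colon P \to N_0$. Since $P$ is projective and $\phi$ is surjective onto $N_0$, there exists a lift $\tilde t \colon P \to \bigoplus_i M(-k_i)$ with $\phi \tilde t = t$. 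Reading off the coordinates of $\tilde t r$ exactly as in Case 1 of Theorem \ref{thm1} yields $h \in \sum_i b_i B$, so $b' \in \sum_j f_j B + \sum_i b_i B$. Thus $X$ is generated as a right $B$-module by the finite set $\{f_1, \dots, f_m, b_1, \dots, b_n\}$.

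The only real obstacle, and the reason one cannot simply invoke Theorem \ref{thm1} verbatim, is the mismatch between $\uPHom_A(M,N)$ and $\uPHom_A(M,N_0)$: by the paper's convention, a map in $\uPHom_A(M, N)$ factors through a projective whose target is $N$, and such a projective need not have image contained in $N_0$, so a naive attempt to lift $t$ through $\phi$ would fail. Passing through $\uHom_A(M,N_0)$ and applying Lemma \ref{lem1} to $N_0$ rather than $N$ forces the intermediate projective to map into $N_0$ and restores the analogy with Theorem \ref{thm1}, where the ``target'' $IM$ was constructed to be the image of $\phi$ in the first place.
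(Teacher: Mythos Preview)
Your proposal is correct and is precisely the adaptation the paper has in mind when it says that the proof of Theorem \ref{thm1} ``also implies'' the corollary: you replace the right ideal $I\subseteq B$ by the $B$-module $X=\uHom_A(M,N)$, replace $IM$ by $N_0=\sum_{b\in X}b(M)$, and rerun the two cases verbatim. Your observation that one must apply Lemma \ref{lem1} to $N_0$ (so that the projective in the factorization maps into the target of $\phi$) is exactly the point---and it mirrors what already happens in Theorem \ref{thm1}, where Case 1 and Case 2 are both phrased for maps into $N=IM$ rather than into $M$.
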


We conclude the following result which suggests noncommutative resolutions for noncommutative singularities.

\begin{proposition} \label{prop1} Let $A$ be an AS-Gorenstein algebra and $M_A\in \gr A$ be an MCM module. Assume that $A$ is a noncommutative isolated singularity. Then $B:=\uEnd_A(M\oplus A)$ is a right noetherian graded algebra, and moreover, there is an equivalence of abelian categories $\qgr B\cong \qgr A$.
\end{proposition}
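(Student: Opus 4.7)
The first claim is immediate from Theorem \ref{thm1}: since $A$ is AS-Gorenstein, $A$ itself is MCM, hence $N:=M\oplus A$ is MCM, so $B=\uEnd_A(N)$ is right noetherian. By Corollary \ref{cor1}, the functor $F:=\uHom_A(N,-)\colon\gr A\to\gr B$ is well defined and, as explained in Section \ref{sectnr}, descends to a functor $\mathcal F\colon\qgr A\to\qgr B$ fitting in diagram \eqref{diag1}.

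To construct a quasi-inverse, I would use the idempotent $e\in B_0$ corresponding to the summand $A\subseteq N$. Then $eBe\cong A$ and $Be\cong N$ as $B$-$A$-bimodules, so the exact functor $G:=(-)e=(-)\otimes_B Be\colon\gr B\to\gr A$ is left adjoint to $F$ by hom-tensor duality. A direct computation gives $GF(X)=F(X)e=\uHom_A(Be,X)=\uHom_A(A,X)=X$ canonically, so $F$ is fully faithful, and a further unwinding of the adjunction shows that the unit map $\eta_B\colon B\to FG(B)$ is the identity. Since both $F$ and $G$ send finite-dimensional modules to finite-dimensional modules, they descend to an adjoint pair $\mathcal G\dashv\mathcal F$ between $\qgr B$ and $\qgr A$, and $\mathcal G\mathcal F\cong\mathrm{Id}_{\qgr A}$.

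The remaining task is to show $\mathcal F\mathcal G\cong\mathrm{Id}_{\qgr B}$, i.e., that $\eta_Y\colon Y\to FG(Y)=\uHom_A(N,Ye)$ has finite-dimensional kernel and cokernel for every $Y\in\gr B$. Since $B$ is right noetherian, pick a finite graded free presentation $P_1\to P_0\to Y\to 0$; additivity together with $\eta_B=\mathrm{id}$ shows $\eta_{P_i}$ is an isomorphism. Applying $G$ (exact) then $F$ (left exact) and setting $I=\ker(GP_0\to GY)$, $J=\ker(GP_1\to GP_0)$, the naturality square for $\eta$ is exact except for error terms that are subquotients of $\uExt^1_A(N,I)$ and $\uExt^1_A(N,J)$. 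Once these error terms are shown to be finite dimensional, a five-lemma chase in $\qgr B$ (where they become zero) yields that $\pi\eta_Y$ is an isomorphism, finishing the proof.

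The main obstacle is therefore the finite-dimensionality of those $\uExt^1$ groups, and this is exactly the point at which the noncommutative isolated singularity assumption is indispensable. Specifically, I would invoke \cite[Proposition 4.3]{SvdB}, used in the same spirit as in the proof of Lemma \ref{lem1}, to conclude that $\uExt^i_A(X,Y)$ is finite dimensional for every $i\ge1$ and every $X,Y\in\gr A$; in particular $\uExt^1_A(N,I)$ and $\uExt^1_A(N,J)$ are finite dimensional. Granted this, the equivalence $\qgr B\cong\qgr A$ follows.
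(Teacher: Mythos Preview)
Your overall strategy—using the idempotent $e$, the adjunction $G=(-)e\dashv F=\uHom_A(N,-)$, and a presentation argument for $\eta_Y$—is sound, and in effect amounts to a direct proof of the result the paper simply cites as \cite[Lemma 2.3]{BHZ}. The paper's route is shorter: it computes $B/BeB\cong\SHom_A(M,M)$, observes this is finite-dimensional by Lemma~\ref{lem1}, and then invokes that lemma to conclude $\qgr B\cong\qgr(eBe)=\qgr A$.

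There is, however, a genuine gap in your justification of the key finiteness step. The assertion that \cite[Proposition 4.3]{SvdB} yields $\uExt^i_A(X,Y)$ finite-dimensional for \emph{every} $i\ge 1$ and all $X,Y\in\gr A$ is false: the isolated-singularity hypothesis controls only $\uExt^i$ for $i$ exceeding the global dimension of $\qgr A$—which is precisely why the proof of Lemma~\ref{lem1} has to pass to $\uExt^{d+1}$. For a counterexample, take $A=\kk[x,y]$ (AS-regular, hence trivially an isolated singularity) and $X=A/xA$: then $\uExt^1_A(X,A)\cong (A/xA)(1)$ is infinite-dimensional. What actually rescues your argument is a fact specific to your module $N=M\oplus A$ that you did not invoke: since $\uExt^1_A(A,-)=0$, you only need $\uExt^1_A(M,I)$ finite-dimensional, and because $M$ is MCM one has $\uExt^1_A(M,I)\cong\SHom_A(\Omega M,I)$. (The MCM condition gives $\uExt^1_A(M,Q)=0$ for projective $Q$, so in the syzygy sequence $0\to\Omega M\to P\to M\to0$ the image of $\uHom_A(P,I)\to\uHom_A(\Omega M,I)$ is exactly $\uPHom_A(\Omega M,I)$.) Since $\Omega M$ is again MCM, Lemma~\ref{lem1} then supplies the required finiteness. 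With this correction your argument goes through, though it is visibly more laborious than the paper's one-line reduction to $B/BeB$.
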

\begin{proof} The graded algebra $B$ may be written as a matrix algebra $$B=\left(
                                                             \begin{array}{cc}
                                                               \uEnd_A(M) & M \\
                                                               M^\vee & A \\
                                                             \end{array}
                                                           \right),
$$ where $M^\vee=\uHom_A(M,A)$. Define a map $$\varphi\colon M\otimes_AM^\vee\longrightarrow\uEnd_A(M), m_1\otimes_A f\mapsto [m_2\mapsto m_1f(m_2)].$$ The multiplication of the above matrix algebra reads as  $$\left(
                                                             \begin{array}{cc}
                                                               g_1 & m_1 \\
                                                               f_1 & a_1 \\
                                                             \end{array}
                                                           \right)\left(
                                                             \begin{array}{cc}
                                                               g_2 &m_2 \\
                                                               f_2 & a_2 \\
                                                             \end{array}
                                                           \right)=\left(
                                                             \begin{array}{cc}
                                                               g_1g_2+\varphi(m_1\otimes f_2) & g_1(m_2)+m_1a_2 \\
                                                               f_1g_2+a_1f_1 & f_1(m_2)+a_1a_2 \\
                                                             \end{array}
                                                           \right).$$
Let $e=\left(
         \begin{array}{cc}
           0 & 0 \\
           0 & 1 \\
         \end{array}
       \right).
$ Then $A\cong eBe$, and $BeB=\left(
                                                             \begin{array}{cc}
                                                               \varphi(M\otimes_AM^\vee) & M \\
                                                               M^\vee & A \\
                                                             \end{array}
                                                           \right)$. It is easy to see $\varphi(M\otimes_AM^\vee)=\uPHom_A(M,M)$. Then $$B/BeB\cong \SHom_A(M,M).$$
By Lemma \ref{lem1}, $B/BeB$ is finite dimensional. By the graded version of the proof of \cite[Lemma 2.3]{BHZ}, we obtain the desired equivalence $\qgr B\cong \qgr A$.
\end{proof}

\begin{lemma}\label{lem2} Let $A$ be an AS-Gorenstein algebra which is a noncommutative isolated singularity. Let $M_A$ be an MCM module. For  $X\in \gr A$, define $$\varphi_X\colon X\otimes_A\uHom_A(M,A)\longrightarrow\uHom_A(M,X),\ x\otimes f\mapsto[m\mapsto xf(m)].$$ Then both $\ker(\varphi)$ and $\coker(\varphi)$ are finite dimensional.
\end{lemma}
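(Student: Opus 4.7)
The plan is to identify both $\ker(\varphi_X)$ and $\coker(\varphi_X)$ with stable Hom spaces $\SHom_A(N,X)$ for suitable MCM modules $N$, and then invoke Lemma~\ref{lem1}.

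First I would show that $\im(\varphi_X)=\uPHom_A(M,X)$. For any pure tensor $x\otimes f\in X\otimes_A\uHom_A(M,A)$, the map $\varphi_X(x\otimes f)=(\cdot x)\circ f$ factors through the free module $A$, giving one inclusion. Conversely, if $g\colon M\to X$ factors as $g=b\circ a$ through a finitely generated graded projective $P=\bigoplus_{i=1}^{n}A(k_i)$, then decomposing by the projections gives $g=\sum_{i=1}^n b_i\circ a_i$, where $a_i\colon M\to A(k_i)$ and $b_i\colon A(k_i)\to X$. Since $b_i$ is determined by its value on the canonical generator $\bar 1_i\in A(k_i)_{-k_i}$, one verifies $b_i\circ a_i=\varphi_X(b_i(\bar 1_i)\otimes a_i)$, so $g\in\im(\varphi_X)$. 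Hence $\coker(\varphi_X)\cong\SHom_A(M,X)$, which is finite dimensional by Lemma~\ref{lem1}.

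For the kernel I would use the Frobenius structure of MCM modules over the AS-Gorenstein algebra $A$. Choosing a surjection $R\twoheadrightarrow\uHom_A(M,A)$ with $R$ a finitely generated graded projective left $A$-module and dualising, the double-dual isomorphism $M\cong\uHom_A(\uHom_A(M,A),A)$ (valid for MCM $M$ over AS-Gorenstein $A$) yields an embedding $M\hookrightarrow P^0:=\uHom_A(R,A)$ with $P^0$ a finitely generated graded projective right $A$-module. Set $\Omega^{-1}M:=P^0/M$. The long exact sequence of $\uExt_A^{\ast}(-,A)$ associated to $0\to M\to P^0\to\Omega^{-1}M\to 0$, together with $\uExt_A^{i}(M,A)=0$ for $i\ge 1$ and the fact that the induced map $\uHom_A(P^0,A)=R\to\uHom_A(M,A)$ is the original surjection, yields $\uExt_A^{i}(\Omega^{-1}M,A)=0$ for all $i\ge 1$. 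In particular $\Omega^{-1}M$ is MCM and the dualised short exact sequence
\[
0\longrightarrow\uHom_A(\Omega^{-1}M,A)\longrightarrow\uHom_A(P^0,A)\longrightarrow\uHom_A(M,A)\longrightarrow 0
\]
of left $A$-modules is exact.

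Tensoring this on the left by $X$ and pairing with the long exact sequence obtained by applying $\uHom_A(-,X)$ to $0\to M\to P^0\to\Omega^{-1}M\to 0$ yields a morphism of four-term exact sequences in which the middle vertical $\varphi_X^{P^0}$ is an isomorphism. Under this middle isomorphism, $X\otimes_A\uHom_A(M,A)$ is identified with $\uHom_A(P^0,X)/\uPHom_A(\Omega^{-1}M,X)$ (applying the cokernel identification already proved to $\Omega^{-1}M$), and $\varphi_X$ corresponds to the map induced on this quotient by restriction along $M\hookrightarrow P^0$. Its kernel is therefore $\uHom_A(\Omega^{-1}M,X)/\uPHom_A(\Omega^{-1}M,X)=\SHom_A(\Omega^{-1}M,X)$, which is finite dimensional by Lemma~\ref{lem1} applied to $\Omega^{-1}M$. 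The main obstacle I foresee is the Frobenius step: constructing the embedding $M\hookrightarrow P^0$ and verifying that $\Omega^{-1}M$ remains MCM uses the double-dual isomorphism together with the vanishing of higher $\uExt$ of $M$ into $A$; once this Frobenius input is secured the remainder is a routine diagram chase combined with Lemma~\ref{lem1}.
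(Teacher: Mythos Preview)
Your proof is correct, but the paper's route to the kernel is considerably shorter. Both of you identify $\coker(\varphi_X)$ with $\SHom_A(M,X)$ and invoke Lemma~\ref{lem1}. For the kernel, however, the paper resolves the \emph{covariant} argument: choose any short exact sequence $0\to K\to P\to X\to 0$ with $P$ finitely generated projective, form the commutative ladder with vertical maps $\varphi_K,\varphi_P,\varphi_X$, and note that $\varphi_P$ is an isomorphism. The Snake Lemma then yields $\ker(\varphi_X)\cong\coker(\varphi_K)$, and since $K$ is finitely generated (by noetherianity alone, no MCM hypothesis on $K$ required), the cokernel identification already proved applies verbatim to $K$. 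Your route resolves the \emph{contravariant} argument instead, building an embedding $M\hookrightarrow P^0$ via reflexivity and checking that $\Omega^{-1}M$ remains MCM. This works and has the mild conceptual payoff of the explicit formula $\ker(\varphi_X)\cong\SHom_A(\Omega^{-1}M,X)$, but it imports the Frobenius structure of $\mcm A$ (double-dual isomorphism, $\uExt$-vanishing for MCM modules), all of which the paper's argument sidesteps entirely.
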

\begin{proof}
Let $0\to K\to P\to X\to 0$ be an exact sequence with $P$ a finitely generated graded projective $A$-module. Then we have the following commutative diagram
$$\xymatrix{
  &K\otimes_A\uHom_A(M,A)\ar[d]_{\varphi_K} \ar[r] & P\otimes_A\uHom_A(M,A) \ar[d]_{\varphi_P}\ar[r]&X\otimes_A\uHom_A(M,A)\ar[r]\ar[d]_{\varphi_X}&0 \\
  0\ar[r]&\uHom_A(M,K) \ar[r] & \uHom_A(M,P)\ar[r]&\uHom_A(M,X)&   }$$ with exact rows.
  Note that $\varphi_P$ is an isomorphism. By Snake Lemma, $\ker(\varphi_X)\cong \coker(\varphi_K)$.

  Since $\im(\varphi_X)=\uPHom_A(M,X)$, it follows from Lemma \ref{lem1} that $\coker(\varphi_X)$ is finite dimensional. Similarly $\coker(\varphi_K)$ and hence $\ker(\varphi_X)$ is also finite dimensional.
  \end{proof}

\begin{theorem}\label{thm2} Let $A$ be an AS-Gorenstein algebra with $\injdim A\ge2$ which is a noncommutative isolated singularity. Assume that $A$ is CM-finite. Let $\{P_0=A,P_1,\dots,P_n\}$ be the set of all the nonisomorphic indecomposable MCM modules (up to degree shifts). Let $M=\bigoplus_{i=1}^nP_n\oplus A$. Then $B:=\uEnd_A(M)$ is a right pre-resolution of $A$.
\end{theorem}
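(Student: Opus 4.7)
The plan is to verify the three defining conditions of a right pre-resolution (Definition \ref{def3}) for $B=\uEnd_A(M)$: namely that $M$ is a small MCM module, that $F=\uHom_A(M,-)$ induces an equivalence $\mathcal{F}\colon\qgr A\cong\qgr B$, and that $\rgldim B=d$, where $d=\injdim A_A$. The first condition is essentially immediate: $M$ is MCM as a direct sum of MCM modules, and Theorem \ref{thm1} together with Corollary \ref{cor1} shows that $B$ is right noetherian and that $\uHom_A(M,N)$ is finitely generated over $B$ for every $N\in\gr A$, so $M$ is small.

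For the equivalence of quotient categories, I would repeat the argument of Proposition \ref{prop1}. Let $e\in B$ be the idempotent projecting $M$ onto its direct summand $A$; then $eBe\cong A$ and $BeB=\uPHom_A(M,M)$, so $B/BeB\cong\SHom_A(M,M)$, which is finite-dimensional by Lemma \ref{lem1} (using that $A$ is an isolated singularity). The graded version of \cite[Lemma 2.3]{BHZ} invoked in the proof of Proposition \ref{prop1} then yields the desired equivalence $\qgr A\cong\qgr B$ induced by $F$ and fitting into diagram (\ref{diag1}).

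The heart of the argument is the equality $\rgldim B=d$. For the upper bound, the strategy is to exploit the bijection $F\colon\add M\to\add B$ sending MCM summands of $M$ to projective $B$-modules. Given a finitely generated $Y\in\gr B$, take a projective presentation $F(N_1)\xrightarrow{F(f)}F(N_0)\to Y\to 0$ with $N_0,N_1\in\add M$ and analyse the syzygies of $Y$ in terms of $F$ applied to $\ker f$, $\im f$, and $\coker f$. Since $A$ is AS-Gorenstein of injective dimension $d$, the $d$-th syzygy of any finitely generated $A$-module is MCM; since $M$ is a representation-generator of the MCM category, these $d$-th syzygies lie in $\add M$. Iterating right $\add M$-approximations along the syzygies should, after at most $d$ steps, produce syzygies in $\add B$, yielding $\rgldim B\le d$. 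For the lower bound, a natural candidate is a simple $B$-module whose projective $B$-resolution encodes, via the corner-ring identification $A\cong eBe$, the failure of $\kk_A$ to have finite projective dimension: the nonvanishing $\uExt^d_A(\kk,A)\ne 0$ guaranteed by AS-Gorenstein forces $\rgldim B\ge d$.

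The main obstacle will be the upper bound: translating $\add M$-approximations of $A$-modules into honest projective resolutions over $B$ requires careful control on the left-exactness of $F$ and on the finite-dimensional quotient $B/BeB$, and one must verify that the iterated syzygies actually stabilise into $\add B$ within $d$ steps rather than drifting. Once this bookkeeping is in place, the precise value $d$ follows by combining the two inequalities with the Auslander--Buchsbaum formula (Theorem \ref{thm0}) applied in $\gr B$, using $\depth A_A=d$ transported through the corner correspondence.
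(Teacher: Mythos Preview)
Your smallness argument matches the paper exactly. The remaining two pieces diverge from the paper in ways worth flagging.

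\textbf{The equivalence $\mathcal{F}$.} You claim that the argument of Proposition~\ref{prop1} (via \cite[Lemma 2.3]{BHZ}) yields the equivalence $\qgr A\cong\qgr B$ \emph{induced by $F=\uHom_A(M,-)$}. But Definition~\ref{def3}(ii) requires precisely that the specific functor $\mathcal{F}$ from diagram~(\ref{diag1}) be an equivalence, and the idempotent argument in Proposition~\ref{prop1} only produces \emph{some} equivalence. In the paper's proof the equivalence coming from \cite{BHZ} is identified with the one induced by $G=-\otimes_A M^\vee$, not by $F$; the authors then invoke Lemma~\ref{lem2} (the natural map $X\otimes_A M^\vee\to\uHom_A(M,X)$ has finite-dimensional kernel and cokernel) to conclude $\mathcal{F}\cong\mathcal{G}$. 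You have skipped this identification, and without it you have not verified condition~(ii).

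\textbf{The global dimension.} The paper disposes of $\rgldim B=d$ in one line by citing \cite[Theorem 5.4]{CKWZ} (see also \cite{Le}). Your sketch of the upper bound via iterated right $\add M$-approximations is exactly the Leuschke/Iyama argument underlying that citation, so this is a legitimate self-contained alternative; it buys independence from the reference at the cost of redoing the bookkeeping. Your lower-bound idea---applying $(-)e$ to the minimal projective $B$-resolution of the simple top of $eB$ to obtain an $\mcm$-resolution of $\kk_A$, and then using $\uExt^d_A(\kk,A)\neq 0$ to force length $\ge d$---is also sound in outline. However, your closing appeal to the Auslander--Buchsbaum formula (Theorem~\ref{thm0}) \emph{in $\gr B$} is unjustified: that theorem is stated for right GAS-Gorenstein algebras, and nothing you have proved shows $B$ is GAS-Gorenstein (indeed, the theorem only claims $B$ is a pre-resolution, not a quasi-resolution). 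Fortunately you do not actually need it---the two inequalities you sketched already pin down $\rgldim B=d$---so this line should simply be dropped.
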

\begin{proof} By Theorem \ref{thm1}, $B$ is right noetherian. Since $M$ is an MCM $A$-module and $A$ is a noncommutative isolated singularity, it follows from Corollary \ref{cor1} that $\uHom_A(M,K)$ is finitely generated for every finitely generated right graded $A$-module $K$. Therefore $M$ is a small $A$-module.

Set $F=\uHom_A(M,-)\colon \gr A\to \gr B$ and $\mathcal F\colon \qgr A\to \qgr B$ to be the induced functor.
We next show that $\mathcal{F}$ is an equivalence. By the proof of Proposition \ref{prop1} (see also \cite[Lemma 2.3]{BHZ}), the functor $G=-\otimes_A M^\vee\colon\gr A\to \gr B$ induces an equivalence of abelian categories $\mathcal{G}\colon\qgr A\to \qgr B$. Now for any $X\in\gr A$, $$\mathcal{G}(\pi(X))=\pi(X\otimes_A M^\vee)=\pi(X\otimes_A \uHom_A(M,A)).$$ Let $\varphi_X\colon X\otimes_A\uHom_A(M,A)\longrightarrow\uHom_A(M,X)$ be the map as in Lemma \ref{lem2}. Then both $\ker(\varphi_X)$ and $\coker(\varphi_X)$ are finite dimensional, and $\varphi_X$ induces a natural isomorphism $$\pi(X\otimes_A\uHom_A(M,A))\cong\pi(\uHom_A(M,X)).$$ It follows that $\mathcal{F}$ is natural isomorphic to $\mathcal{G}$, and hence an equivalence.

By \cite[Theorem 5.4]{CKWZ} (where $A$ is assumed to be Cohen-Macaulay, but the proof applies in our case. See also \cite{Le} for commutative case), $\rgldim(B)=d$. Therefore all conditions in Definition \ref{def3} are satisfied. By definition $B$ is a right pre-resolution of $A$.
\end{proof}

\section{Noncommutative quadric hypersurfaces}\label{secthyper}

In this section, we focus on noncommutative resolutions of noncommutative quadric hypersurfaces. Let us recall some terminologies.

Let $A$ be a locally finite connected graded algebra. A graded $A$-module $M_A$ is called a {\it Koszul module} (cf. \cite{P}) if $M_A$ has a linear projective resolution:
$$\cdots \to P^{-n}\to\cdots\to P^{-1}\to P^0\to M\to0,$$ where $P^{-n}$ is a graded projective module generated in degree $n$ for all $n\ge0$. A left Koszul $A$-module is defined similarly. If the trivial module $\kk_A$ is a Koszul module, then $A$ is called a {\it Koszul Algebra}. It is known that a Koszul algebra $A$ must be quadratic, that is, $A$ may be written as $A=T(V)/(R)$, where $V$ is a finite dimensional vector space, and the generating relations $R$ is contained in $V\otimes V$. The {\it quadratic dual} of $A$ is defined to be the graded algebra $A^!=T(V^*)/(R^\bot)$, where $R^\bot$ is the orthogonal dual of $R$ in the space $V^*\otimes V^*$. Note that $A^!$ is also a Koszul algebra.

For a locally finite graded algebra $A$, the Hilbert series of $A$ is defined to be the formal power series: $$H_A(t)=\sum_{i\in \mathbb Z}(\dim A_i)t^i.$$

A noetherian connected graded algebra $S$ is called a {\it quantum polynomial algebra} if the following conditions are satisfied:
\begin{itemize}
  \item [(i)] $S$ is a Koszul AS-regular algebra;
  \item [(ii)] the Hilbert series of $S$ is $H_S(t)=\frac{1}{(1-t)^d}$ for some $d\ge1$.
\end{itemize}

Let $S$ be a quantum polynomial algebra. Suppose $w\in S_2$ is a central regular element in $S$ of degree two. The quotient algebra $A=S/Sw$ is usually called a {\it noncommutative quadric hypersurface}.

The following properties of noncommutative quadric hypersufaces are well known (see \cite[Lemma 5.1(1)]{SvdB}, \cite[Lemma 1.2]{HY} for instance). Note that for a quantum polynomial algebra the Gorenstein parameter coincides with the global dimension.

\begin{lemma}\label{qlem1} Assume $S$ is a quantum polynomial algebra with global dimension $d+1$ ($d\ge0$). Let $w\in S_2$ be a central regular element of $S$, and let $A=S/Sw$.
\begin{itemize}
  \item [(i)]  $A$ is a Koszul algebra.
  \item [(ii)] $A$ is AS-Gorenstein of injective dimension $d$ with Gorenstein parameter $d-1$.
  \item [(iii)] There is a central regular element $\varpi \in A^!_2$ such that $S^!\cong A^!/A^!\varpi$.
\end{itemize}
\end{lemma}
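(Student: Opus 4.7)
For part (i), I would use the classical fact that the quotient of a Koszul algebra by a central regular element of degree two remains Koszul. Concretely, one combines the linear Koszul resolution of $\kk_S$ with the $2$-step $S$-resolution
\[ 0 \to S(-2) \xrightarrow{w} S \to A \to 0 \]
(which is exact because $w$ is regular) via a change-of-rings argument to produce a linear $A$-projective resolution of $\kk_A$, yielding Koszulness.

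For part (ii), I apply $\uHom_S(-, S)$ to the above resolution. Regularity of $w$ forces $\uExt^i_S(A, S) = 0$ for $i \neq 1$ and $\uExt^1_S(A, S) \cong A(2)$. The standard change-of-rings identity between $S$- and $A$-Ext groups (coming from the adjunction $\uHom_A(M, \uHom_S(A, -)) \cong \uHom_S(M, -)$ at the derived level) then gives, for any $M \in \gr A$,
\[ \uExt^i_A(M, A) \cong \uExt^{i+1}_S(M, S)(-2). \]
Specialising to $M = \kk$ and using that $S$ is AS-regular of global dimension $d+1$ with Gorenstein parameter $d+1$ (as the paper observes, the two coincide for quantum polynomial algebras), we obtain $\uExt^i_A(\kk, A) = 0$ for $i \neq d$ and $\uExt^d_A(\kk, A) \cong \kk(d-1)$. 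Centrality of $w$ produces the symmetric left-sided statement, so $A$ is AS-Gorenstein of injective dimension $d$ with Gorenstein parameter $d - 1$.

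For part (iii), set $V = A_1 = S_1$ and fix a lift $\tilde w \in V \otimes V$ of $w$; then $S = T(V)/(R_S)$ and $A = T(V)/(R_S \oplus \kk \tilde w)$. Quadratic duality gives $A^! = T(V^*)/(R_A^\perp)$ with $R_A^\perp = R_S^\perp \cap \tilde w^\perp$ of codimension one in $R_S^\perp$, so there is a canonical surjection $A^! \twoheadrightarrow S^!$ whose degree-two kernel is one-dimensional; pick a generator $\varpi \in A^!_2$. To conclude $S^! \cong A^!/A^!\varpi$, I need $\varpi$ central and regular. Centrality reduces, via the Koszul pairing between $T(V)$ and $T(V^*)$, to the centrality of $w$ in $S$: the commutators $[\varpi, x] \in A^!_3$ for $x \in V^*$ pair against $V^{\otimes 3}$ in a way controlled by the defining relations of $S$ and the central position of $\tilde w$, producing the desired vanishing. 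Regularity of $\varpi$ then follows from the Hilbert-series identity $H_{A^!}(t)(1 - t^2) = H_{S^!}(t)$, which is verified from $H_A(t) = (1-t^2)H_S(t)$, $H_S(t) = 1/(1-t)^{d+1}$, and the Koszul formula $H_B(t) H_{B^!}(-t) = 1$ applied to both $B = S$ and $B = A$.

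The main obstacle is the centrality of $\varpi$ in part (iii): the Koszul-pairing bookkeeping is delicate, and I would either carry it out directly on the tensor algebra or invoke the framework developed in \cite{SvdB} and \cite{HY} for regular central extensions of Koszul algebras.
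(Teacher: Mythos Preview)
The paper does not actually prove this lemma: it states that the properties ``are well known'' and cites \cite[Lemma 5.1(1)]{SvdB} and \cite[Lemma 1.2]{HY}. So there is no in-paper argument to compare against; your task was only to supply a plausible justification, and you have done so.

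Your sketch is correct and follows the standard route taken in those references. Part (i) is the classical fact about degree-two central regular quotients of Koszul algebras. Part (ii) is the Rees-type change of rings: from $R\uHom_S(A,S)\cong A(2)[-1]$ one gets $\uExt^i_A(M,A)\cong\uExt^{i+1}_S(M,S)(-2)$, and specialising to $M=\kk$ gives the Gorenstein parameter $d-1$; finiteness of injective dimension follows by the same isomorphism applied to an arbitrary finitely generated $M$. Part (iii) is set up correctly: $R_A^\perp$ has codimension one in $R_S^\perp$, so the kernel of $A^!\twoheadrightarrow S^!$ is generated in degree two by a single element $\varpi$; once $\varpi$ is central the two-sided ideal it generates is $A^!\varpi$, and your Hilbert-series computation $(1-t^2)H_{A^!}(t)=(1+t)^{d+1}=H_{S^!}(t)$ forces the multiplication map to be injective. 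You are right that the only genuinely delicate step is the centrality of $\varpi$; the Koszul-pairing argument you outline (or the ``central extension'' formalism of \cite{SvdB}) is exactly how this is handled in the cited sources.
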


\begin{setup}\label{setup} In the rest of this section, $S$ is a quantum polynomial algebra of global dimension $d+1$ with $d\ge0$, and $w\in S_2$ is a central regular element. Set $A=S/Sw$.
\end{setup}

We recall some results obtained in \cite{SvdB}. Let $D^b(\gr A)$ be the bounded derived category of $\gr A$. There is a Koszul duality (cf. \cite[Subsection 2.4]{SvdB}, or \cite[Section 3]{BGS} for general situation):
\begin{equation}\label{eqq1}
  K\colon D^b(\gr A)\longrightarrow D^b(\gr A^!).
\end{equation}
Notice that
\begin{equation}\label{eqq7}
  K(\kk)=A^!,\ K(A)=\kk\text{ and }K(M(1))=K(M)[-1](1)\text{ for all }M\in D^b(\gr A).
\end{equation}

The above duality induces the following duality:
\begin{equation}\label{eqq2}
  \overline{K}\colon D^b(\gr A)/per A\longrightarrow D^b(\gr A^!)/D^b_{\tor} (\gr A^!),
\end{equation}
where $per A$ is the full subcategory of $D^b(\gr A)$ consisting of perfect complexes, and $D^b_{\tor}(\gr A^!)$ is the full subcategory of $D^b(\gr A^!)$ consisting of complexes with finite dimensional total cohomology. Notice that $D^b(\gr A^!)/D^b_{\tor} (\gr A^!)\cong D^b(\qgr A^!)$. Let $\mcm A$ be the full subcategory of $\gr A$ consisting of all the MCM modules, and let $\underline{\mcm} A$ be the stable category. Then there is a natural equivalence of triangulated categories \cite[Theorem 4.4.1(2)]{B}:
\begin{equation}\label{eqq3}
  G\colon\underline{\mcm}A\longrightarrow D^b(\gr A)/per A.
\end{equation}
Combining the functors in (\ref{eqq2}) and (\ref{eqq3}), we obtain the following Buchweitz's duality (cf. \cite[Theorem 3.2]{SvdB}):
\begin{equation}\label{eqq4}
  B\colon\underline{\mcm}A\longrightarrow D^b(\qgr A^!).
\end{equation}

By Lemma \ref{qlem1}(iii), there is a central regular element $\varpi\in A^!_2$. Let $A^![\varpi^{-1}]$ be the localization of $A^!$ by the multiplication system defined by $\varpi$. Then $A^![\varpi^{-1}]$ is a $\mathbb Z$-graded algebra. Define a finite dimensional algebra (cf. \cite[Subsection 5.1]{SvdB}):
\begin{equation}\label{def-CA}
  C(A)=A^![\varpi^{-1}]_0,
\end{equation}
which is the degree zero part of $A^![\varpi^{-1}]$. Since $S$ is a quantum polynomial algebra, it follows that (cf. \cite{SvdB}):
\begin{equation}\label{eqqdim}
  \dim C(A)=\sum_{i\ge0}\dim S^!_{2i}=\frac{1}{2}\dim S^!.
\end{equation}
Notice that $A^!/A^!\varpi$ is finite dimensional. It follows that $M\in \tor A^!$ if and only if any $m\in M$ is annihilated by some power of $\varpi$. Therefore, there is an equivalence: $$L\colon D^b(\qgr A^!)\longrightarrow D^b(\mmod C(A)),$$
where $\mmod C(A)$ is the category of finite dimension modules. Notice that
 \begin{equation}\label{eqq8}
   L(\pi(A^!))=C(A).
\end{equation} Combining $L$ with the Buchweitz's duality, we obtain the following equivalence (cf. \cite[Proposition 5.2]{SvdB}):
\begin{equation}\label{eqq5}
  F\colon \underline{\mcm} A\overset{B}\longrightarrow D^b(\qgr A^!)\overset{L}\longrightarrow D^b(\mmod C(A)).
\end{equation}

We may put the above mentioned triangle equivalences in the following commutative diagram
$$
\xymatrix
{
\underline{\mcm} A \ar[r]^{B} \ar@(ur,ul)[rr]^{F} \ar[d]^{G} & D^b(\qgr A^!) \ar[d]^{\cong} \ar[r]^{L} &D^b(\mmod C(A)) \\
D^b(\gr A)/per A \ar[r]^{\overline{K}} & D^b(\gr A^!)/D^b_{\tor} (\gr A^!)
}.
$$

We remark that the finite dimensional algebra $C(A)$ may be obtained from a Clifford deformation of the Frobenius algebra $S^!$ (cf. \cite{HY}). In this section, we will give a new method to obtain the finite dimensional algebra $C(A)$.

Taking a minimal graded projective resolution of $\kk_A$ as follows:
\begin{equation}\label{res}
  \cdots\longrightarrow P^{-d}\overset{\partial^{-d}}\longrightarrow P^{-d+1}\overset{\partial^{-d+1}}\longrightarrow\cdots \longrightarrow P^0\longrightarrow\kk_A\longrightarrow0.
\end{equation}
Let $\Omega^d(\kk_A)=\ker\partial^{-d+1}$ be the $d$th syzygy of the trivial module. Since $A$ is a Koszul algebra, $\Omega^{d}(\kk_A)$ is generated in degree $d$. We fix a notion as follows:
$$\mathbb M:=\Omega^d(\kk_A)(d).$$
The following properties of $\mathbb M$ is clear.

\begin{lemma} Retain the notation as above.
\begin{itemize}
  \item [(i)] $\mathbb M$ is a Koszul $A$-module.
  \item [(ii)] $\mathbb M$ is an MCM module.
\end{itemize}
\end{lemma}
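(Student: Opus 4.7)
For part (i), my plan is to read the Koszulness directly off the minimal projective resolution of $\kk_A$ in $(\ref{res})$. Since $A$ is a Koszul algebra (Lemma \ref{qlem1}(i)), this resolution is linear, so each $P^{-i}$ is generated in degree $i$. Because $\partial^{-d+1}$ is a map between free modules generated in degrees $d-1$ and $d-2$ respectively, its kernel $\Omega^d(\kk_A)$ is concentrated in degrees $\geq d$ and generated in degree $d$; hence $\mathbb{M}=\Omega^d(\kk_A)(d)$ is generated in degree $0$. The tail
\[
\cdots \longrightarrow P^{-d-2}(d)\longrightarrow P^{-d-1}(d)\longrightarrow P^{-d}(d)\longrightarrow \mathbb{M}\longrightarrow 0
\]
of $(\ref{res})$ shifted by $(d)$ is then a minimal graded projective resolution of $\mathbb{M}$ in which the $n$-th term $P^{-d-n}(d)$ is generated in degree $n$. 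This is precisely the defining property of a Koszul module.

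For part (ii), I will deduce that $R^j\Gamma(\mathbb{M})=0$ for $j\neq d$ by a dimension-shifting argument along the syzygies $\Omega^i:=\Omega^i(\kk_A)$. Because $\injdim A_A=d$ (Lemma \ref{qlem1}(ii)), $R^j\Gamma$ vanishes on $\gr A$ for all $j>d$, which handles one direction. For the other, $A$ is AS-Gorenstein of dimension $d$, so $A$ itself is MCM: $R^j\Gamma(A)=0$ for $j<d$. Shifting preserves this, so the same vanishing holds for every finitely generated graded projective $P^{-i}$ appearing in $(\ref{res})$.

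Now from each short exact sequence $0\to \Omega^{i+1}\to P^{-i}\to \Omega^i\to 0$ the long exact local cohomology sequence yields, for $j<d$, isomorphisms $R^{j}\Gamma(\Omega^{i+1})\cong R^{j-1}\Gamma(\Omega^i)$ (both flanking $R^{*}\Gamma(P^{-i})$ terms vanish). Starting from $\Omega^0=\kk$, for which $R^0\Gamma(\kk)=\kk$ and $R^j\Gamma(\kk)=0$ for $j>0$, induction on $i$ gives $R^j\Gamma(\Omega^i)=0$ for $j<d$ and $j\neq i$, and $R^i\Gamma(\Omega^i)\cong\kk$ for $i<d$. Feeding this into the case $i=d-1$ and using the isomorphism above in the range $0\leq j<d$ produces $R^j\Gamma(\Omega^d)\cong R^{j-1}\Gamma(\Omega^{d-1})=0$ for every such $j$ (the value $j=d$ is excluded, so the surviving copy of $\kk$ never appears). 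Combined with the vanishing for $j>d$, this proves $\Omega^d(\kk_A)$, and hence $\mathbb{M}$, is an MCM module.

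Neither part presents a real obstacle; the subtlest point is merely keeping the indexing careful in the dimension-shift for part (ii), in particular checking the boundary case $j=0$ (where $R^{-1}\Gamma$ is vacuously zero) and confirming that the copy of $\kk$ produced by the syzygy chain lands at cohomological degree $d$ rather than leaking into some $j<d$.
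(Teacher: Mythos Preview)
Your argument is correct. The paper gives no proof of this lemma at all: it simply asserts that ``the following properties of $\mathbb{M}$ [are] clear'' and moves on. Your treatment of (i) is exactly the intended one-line observation that a syzygy of a Koszul module over a Koszul algebra is again Koszul after the appropriate shift. For (ii), your local-cohomology dimension shift is valid; the only point that deserves a word of justification is the blanket vanishing $R^j\Gamma(M)=0$ for $j>d$ on $\gr A$, which follows here from local duality for the AS-Gorenstein algebra $A$ (not merely from $\injdim A_A=d$ in isolation). An equivalent and slightly shorter route, likely what the authors had in mind, is to use the dual characterization of MCM over an AS-Gorenstein algebra: $\uExt^i_A(\Omega^d(\kk_A),A)\cong \uExt^{i+d}_A(\kk_A,A)=0$ for all $i>0$ by the Gorenstein condition, so $\Omega^d(\kk_A)$ is MCM. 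Either way the lemma is immediate.
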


We investigate more properties of $\mathbb M$. For a right graded $A$-module $X$, let $X^\vee =\uHom_A(X,A)$ denote the dual module. Clearly $X^\vee$ is a left graded module in an obvious way.

\begin{proposition}\label{qlem2} Retain the notations as above. Then $\mathbb M^\vee(1)$ is a left Koszul $A$-module.
\end{proposition}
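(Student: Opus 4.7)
The plan is to fit $\mathbb M^\vee(1)$ into a short exact sequence of left graded $A$-modules
$$0 \to K \to \mathbb M^\vee(1) \to \kk \to 0$$
in which $K$ is itself Koszul with a \emph{finite} linear projective resolution, and then conclude by the standard fact that an extension of two Koszul modules is Koszul.

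To construct this sequence, I would first truncate the minimal (hence linear) projective resolution $\cdots \to P^{-i} \to \cdots \to P^0 \to \kk_A \to 0$ at stage $d$, obtaining the finite exact sequence $0 \to \mathbb M(-d) \to P^{-d+1} \to \cdots \to P^0 \to \kk_A \to 0$ with $P^{-i} = A(-i)^{d_i}$, $d_i := \dim A^!_i$. Applying $\uHom_A(-,A)$ to the associated short exact syzygy sequences and splicing them, using (i) the AS-Gorenstein identities $\uExt^i_A(\kk,A) = 0$ for $i \neq d$ together with $\uExt^d_A(\kk,A) = \kk(d-1)$ (Lemma~\ref{qlem1}(ii)), and (ii) the MCM vanishing $\uExt^i_A(\mathbb M,A) = 0$ for $i \geq 1$, yields the exact complex
$$0 \to A \to A(1)^{d_1} \to \cdots \to A(d-1)^{d_{d-1}} \to \mathbb M^\vee(d) \to \kk(d-1) \to 0.$$
A twist by $(1-d)$ then converts the rightmost quotient to $\kk$, the middle term to $\mathbb M^\vee(1)$, and the left-hand tail into a finite linear projective resolution
$$0 \to A(1-d) \to A(2-d)^{d_1} \to \cdots \to A(-1)^{d_{d-2}} \to A^{d_{d-1}} \to K \to 0$$
of the kernel $K := \ker(\mathbb M^\vee(1) \twoheadrightarrow \kk)$; in particular, $K$ is a Koszul left $A$-module.

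For the last step I would invoke the $\uExt$-criterion for Koszulness: a graded module $N$ generated in degree $0$ is Koszul if and only if $\uExt^i_A(N,\kk)$ is concentrated in internal degree $-i$ for every $i \geq 0$. Applying $\uExt^*_A(-,\kk)$ to $0 \to K \to \mathbb M^\vee(1) \to \kk \to 0$ and using that this criterion already holds for both $K$ and $\kk$, a term-by-term inspection of the long exact sequence forces $\uExt^i_A(\mathbb M^\vee(1),\kk)$ to vanish outside internal degree $-i$: the neighbouring groups $\uExt^{i-1}_A(K,\kk)$ and $\uExt^{i+1}_A(\kk,\kk)$ live only in internal degrees $1-i$ and $-i-1$ respectively, so the connecting and boundary maps in the other internal degrees are automatically forced between zero groups.

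The main nuisance throughout the argument is the bookkeeping of graded twists---matching the Gorenstein twist $(d-1)$ appearing in $\uExt^d_A(\kk,A) = \kk(d-1)$ with the $(1-d)$-shift needed to land on $\mathbb M^\vee(1)$, and verifying that linearity of the minimal resolution of $\kk_A$ propagates correctly through dualization and shifting into linearity of the finite resolution of $K$.
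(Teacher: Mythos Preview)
Your proof is correct and follows essentially the same route as the paper: dualize the truncated Koszul resolution of $\kk_A$, use the AS-Gorenstein condition (Lemma~\ref{qlem1}(ii)) together with the MCM property of $\mathbb M$ to obtain the short exact sequence $0\to K\to \mathbb M^\vee(1)\to\kk\to0$ with $K$ Koszul, and deduce Koszulity of $\mathbb M^\vee(1)$ from it. The paper simply asserts that last implication, whereas you spell it out via the $\uExt$-criterion; your explanation there is slightly roundabout (the relevant point is just that in any internal degree $j\neq -i$ the two groups \emph{immediately} flanking $\uExt^i_A(\mathbb M^\vee(1),\kk)_j$, namely $\uExt^i_A(\kk,\kk)_j$ and $\uExt^i_A(K,\kk)_j$, already vanish---the outer terms $\uExt^{i-1}_A(K,\kk)$ and $\uExt^{i+1}_A(\kk,\kk)$ need not be invoked), but the conclusion is correct.
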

\begin{proof} Applying the functor $\uHom_A(-,A)$ to the exact sequence (\ref{res}), we obtain the sequence
\begin{equation}\label{ex1}
  0\to \uHom_A(P^0,A)\to\uHom_A(P^{-1},A)\to\cdots\to\uHom_A(P^{-d+1},A)\overset{\iota^\vee}\longrightarrow\uHom_A(\Omega(\kk_A),A)\to0.
\end{equation}
Since $A$ is AS-Gorenstein of injective dimension $d$ with Gorenstein parameter $d-1$, the sequence (\ref{ex1}) is exact except at the last position, where
the cohomology group is ${}_A\kk(d-1)$. Note that $\mathbb M^\vee(d)=\uHom_A(\Omega(\kk_A),A)$. The sequence (\ref{ex1}) implies an exact sequence of left $A$-modules
\begin{equation}\label{ex2}
  0\to K\to\mathbb M^\vee(d)\to {}_A\kk(d-1)\to0,
\end{equation}
where $K=\im\iota^\vee$. Note that $K$ has a projective resolution $$0\to \uHom_A(P^0,A)\to\uHom_A(P^{-1},A)\to\cdots\to\uHom_A(P^{-d+1},A)\to K\to0.$$ Hence $K(-d+1)$ is a left Koszul $A$-module, and the Koszulity of $\mathbb M^\vee(1)$ follows from (\ref{ex2}).
\end{proof}

\begin{proposition}\label{qlem3} Retain the notations as above. We have the following properties.
\begin{itemize}
\item[(i)] $\uExt_A^i(\kk_A,\mathbb M)=0$ for $i<d$.
\item[(ii)] The graded vector space $\uExt_A^d(\kk_A, \mathbb M)$ is concentrated in degree $-d$.
\end{itemize}
\end{proposition}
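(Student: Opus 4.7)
The plan is to treat (i) by a direct appeal to the MCM hypothesis and the depth characterization, and (ii) by a careful dimension-shifting argument through the minimal Koszul resolution of $\kk$, bookkeeping internal degrees using the Gorenstein parameter $d-1$ established in Lemma \ref{qlem1}(ii).

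For (i), since $\mathbb{M}$ is MCM, $R^i\Gamma(\mathbb{M})=0$ for $i<d$; equivalently, $\depth(\mathbb{M})\ge d$. By Lemma \ref{lemdf} this yields $\uExt^i_A(\kk,\mathbb{M})=0$ for $i<d$, as $A$ is connected graded so $A/J=\kk$.

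For (ii), write $Y_i=\Omega^i(\kk_A)$, so $Y_0=\kk$ and $Y_d=\Omega^d(\kk)=\mathbb{M}(-d)$. Since $A$ is Koszul (Lemma \ref{qlem1}(i)), the minimal resolution has $P^{-i}=A(-i)^{r_i}$ with $r_i=\dim A^!_i$, producing short exact sequences $0\to Y_{i+1}\to P^{-i}\to Y_i\to 0$. The numerical ingredients I will use are: (a) $\uExt^j_A(\kk,P^{-i})=0$ for $j\ne d$ by AS-Gorensteinness; (b) $\uExt^d_A(\kk,P^{-i})\cong\kk^{r_i}$ concentrated in internal degree $i-d+1$, coming from $\uExt^d_A(\kk,A)=\kk(d-1)$ and the shift $P^{-i}=A(-i)^{r_i}$; (c) $\uExt^j_A(\kk,\kk)\cong A^!_j$ concentrated in internal degree $-j$, coming from the Koszul resolution. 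From (a) and dimension shifting one gets the iso $\uExt^j_A(\kk,Y_i)\cong \uExt^{j+1}_A(\kk,Y_{i+1})$ whenever $j\le d-2$, which iterated gives $\uExt^{d-1}_A(\kk,Y_i)\cong A^!_{d-1-i}$ concentrated in internal degree $i-d+1$ for $0\le i\le d-1$.

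With these in hand, I prove by induction on $i$ that $\uExt^d_A(\kk,Y_i)$ is concentrated in internal degree $i-d$. The base $i=0$ is Koszul duality. For the step, the LES of $0\to Y_{i+1}\to P^{-i}\to Y_i\to 0$ reads
\[
\uExt^{d-1}_A(\kk,Y_i)\to \uExt^d_A(\kk,Y_{i+1})\to \uExt^d_A(\kk,P^{-i})\xrightarrow{\ \varphi\ }\uExt^d_A(\kk,Y_i)\to \uExt^{d+1}_A(\kk,Y_{i+1})\to 0.
\]
The crucial observation is that $\varphi$ is a degree-$0$ graded map from a space concentrated in internal degree $i-d+1$ to one concentrated in (inductively) internal degree $i-d$; these internal degrees differ by one, so $\varphi=0$. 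The remaining three non-trivial entries $\uExt^{d-1}_A(\kk,Y_i)$, $\uExt^d_A(\kk,Y_{i+1})$, $\uExt^d_A(\kk,P^{-i})$ all live in internal degree $i-d+1=(i+1)-d$, so exactness forces $\uExt^d_A(\kk,Y_{i+1})$ to be concentrated in degree $(i+1)-d$ as well. Specializing to $i+1=d$ yields $\uExt^d_A(\kk,\mathbb{M}(-d))$ concentrated in degree $0$, equivalently $\uExt^d_A(\kk,\mathbb{M})$ concentrated in degree $-d$.

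The main obstacle is purely organizational: lining up the auxiliary statement about $\uExt^{d-1}_A(\kk,Y_i)$ with the inductive statement about $\uExt^d_A(\kk,Y_i)$ so that at every step of the induction the connecting map collapses by a degree-mismatch argument. A naive attempt via the hypercohomology spectral sequences of the complex $[P^{-d+1}\to\cdots\to P^0]$ appears to force contradictions until one notices that $\uExt^{>d}_A(\kk,\mathbb{M})$ need not vanish, so the cleanest route is the explicit LES bookkeeping sketched above rather than a spectral sequence.
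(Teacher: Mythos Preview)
Your proof is correct, but it takes a genuinely different route from the paper's.

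For part (i) both arguments coincide. For part (ii), the paper applies the Gorenstein duality $R\uHom_A(-,A)\colon D^b(\gr A)\to D^b(\gr A^\circ)$: since $\mathbb M$ is MCM this sends $\mathbb M$ to $\mathbb M^\vee$, and sends $\kk_A$ to ${}_A\kk[-d](d-1)$, so $\uExt^d_A(\kk,\mathbb M)_i\cong \Hom_{\gr A^\circ}(\mathbb M^\vee(1),{}_A\kk(d+i))$. Then Proposition~\ref{qlem2} (that $\mathbb M^\vee(1)$ is a left Koszul module, in particular generated in degree $0$) forces this to vanish unless $i=-d$. Your argument instead stays entirely on the right side: you walk up the Koszul syzygy tower via the long exact sequences of $0\to Y_{i+1}\to P^{-i}\to Y_i\to 0$, using only the Gorenstein parameter to pin down the internal degree of $\uExt^d_A(\kk,P^{-i})$ and the Koszul identification $\uExt^j_A(\kk,\kk)\cong A^!_j$ to handle the base and the auxiliary $\uExt^{d-1}$ terms; the key collapse $\varphi=0$ then follows from a one-degree mismatch.

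What each buys: the paper's proof is short and conceptual but relies on derived duality and on Proposition~\ref{qlem2}, which was established separately. Your proof is more elementary and entirely self-contained---it uses neither the duality functor nor any facts about $\mathbb M^\vee$---at the cost of more explicit bookkeeping. In fact your argument makes Proposition~\ref{qlem2} unnecessary for this particular result, though the paper still uses \ref{qlem2} elsewhere (e.g.\ in the proof of Theorem~\ref{qthm1}).
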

\begin{proof} (i) follows from the fact that $\mathbb M$ is an MCM module.

(ii) Since $A$ is AS-Gorenstein, $R\uHom_A(-,A)\colon D^b(\gr A)\to D^b(\gr A^\circ)$ is a duality (cf. \cite{Y}), where $A^\circ$ is the opposite algebra of $A$. Since $\mathbb M$ is an MCM module, it follows that $R\uHom_A(\mathbb M,A)\cong \uHom_A(\mathbb M,A)=\mathbb M^\vee$ in $D^b(\gr A^\circ)$. Note that $R\uHom_A(\kk_A,A)\cong {}_A\kk[-d](d-1)$. Therefore we have
\begin{eqnarray*}
 \uExt_A^d(\kk_A,\mathbb M)_i&=&\Hom_{D^b(\gr A)}(\kk_A,\mathbb M[d](i))\\
 &\cong&\Hom_{D^b(\gr A^\circ)}\left(R\uHom_A(\mathbb M,A)[-d],R\uHom_A(\kk_A,A)(i)\right)\\
 &\cong&\Hom_{D^b(\gr A^\circ)}\left(\mathbb M^\vee,{}_A\kk(d+i-1)\right)\\
 &\cong&\Hom_{D^b(\gr A^\circ)}\left(\mathbb M^\vee(1),{}_A\kk(d+i)\right)\\
 &\cong&\Hom_{\gr A^\circ}\left(\mathbb M^\vee(1),{}_A\kk(d+i)\right).
\end{eqnarray*}
By Proposition \ref{qlem2}, $\mathbb M^\vee(1)$ is generated in degree one. Therefore, $\Hom_{\gr A^\circ}(\mathbb M^\vee(1),{}_A\kk(d+i))=0$ for $i\neq -d$.
Hence $\uExt_A^d(\kk_A, \mathbb M)$ is concentrated in degree $-d$.
\end{proof}

\begin{theorem}\label{qthm1} Retain the notations as above. We have
\begin{itemize}
  \item [(i)] $\End_{\gr A}(\mathbb M)\cong C(A)$;
  \item [(ii)] $A$ is a noncommutative isolated singularity if and only if $\End_{\gr A}(\mathbb M)$ is semisimple.
\end{itemize}
\end{theorem}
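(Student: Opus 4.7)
Statement (ii) would reduce immediately to (i): once $\End_{\gr A}(\mathbb M)\cong C(A)$ is known, the equivalence ``$A$ is a noncommutative isolated singularity'' $\iff$ ``$C(A)$ is semisimple'' is \cite[Theorem 6.3]{HY}. So my plan concentrates on (i), where I would first identify $\End_{\gr A}(\mathbb M)$ with a single graded piece of $\uExt^d_A(\kk_A,\mathbb M)$ and then compute that piece via the Buchweitz/Koszul machinery from \eqref{eqq2}--\eqref{eqq5}.

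For the first step, applying $\uHom_A(-,\mathbb M)$ to the short exact sequence $0\to\Omega^d(\kk_A)\to P^{-d+1}\to\Omega^{d-1}(\kk_A)\to 0$ extracted from \eqref{res} and using Proposition \ref{qlem3}(i) yields a surjection $\uHom_A(\Omega^d(\kk_A),\mathbb M)\twoheadrightarrow \uExt^d_A(\kk_A,\mathbb M)$ whose kernel is the image of $\uHom_A(P^{-d+1},\mathbb M)$. The identification $\Omega^d(\kk_A)=\mathbb M(-d)$ turns the degree-$(-d)$ piece of the source into $\End_{\gr A}(\mathbb M)$. Since $A$ is Koszul, $P^{-d+1}$ is generated in degree $d-1$, so any degree-$(-d)$ map $P^{-d+1}\to \mathbb M$ must send its degree-$(d-1)$ generators into $\mathbb M_{-1}=0$ and therefore vanishes. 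Together with Proposition \ref{qlem3}(ii) this produces $\End_{\gr A}(\mathbb M)\cong \uExt^d_A(\kk_A,\mathbb M)_{-d}=\uExt^d_A(\kk_A,\mathbb M)$.

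For the second step, I would feed $\mathbb M$ through the equivalence $F\colon \underline{\mcm} A\to D^b(\mmod C(A))$ of \eqref{eqq5}. The minimal resolution \eqref{res} gives $\Omega^d(\kk_A)\cong \kk_A[-d]$ in $D^b(\gr A)/per A$, so $\mathbb M\cong \kk_A(d)[-d]$ there. Iterating $K(M(1))=K(M)[-1](1)$ with $K(\kk_A)=A^!$ produces $\overline K(\mathbb M)\cong \pi(A^!(d))[-2d]$ in $D^b(\qgr A^!)$, and composition with $L$ produces $F(\mathbb M)\cong A^![\varpi^{-1}]_d[-2d]$ in $D^b(\mmod C(A))$. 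Since cohomological shifts do not affect $\End$, I obtain $\End_{D^b(\mmod C(A))}(F(\mathbb M))\cong \End_{C(A)}(A^![\varpi^{-1}]_d)$. Because $\varpi\in A^!_2$ becomes a unit of degree $2$ in the localization, the component $A^![\varpi^{-1}]_d$ is a free right $C(A)$-module of rank one, and its endomorphism ring is $C(A)$ itself acting by left multiplication. Combined with the first step, this gives $\End_{\gr A}(\mathbb M)\cong C(A)$ as rings.

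The technical heart of the argument lies in the second step: identifying $F(\mathbb M)$ with $A^![\varpi^{-1}]_d$ as a $C(A)$-module, and then verifying that its right-$C(A)$-endomorphism ring really is $C(A)$ itself. For even $d$ this is immediate from multiplication by $\varpi^{d/2}$, but for odd $d$ one must work with the odd component of the $\mathbb Z_2$-graded algebra $A^![\varpi^{-1}]$; showing the required rank-one freeness requires combining the Frobenius property of $S^!$ with the Gorenstein parameter $d-1$ from Lemma \ref{qlem1}(ii). Faithfully tracking the interleaved cohomological and internal shifts through the three functors $G$, $\overline K$ and $L$ is the main bookkeeping burden of the proof.
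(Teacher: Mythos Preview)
Your overall architecture matches the paper's: compute the endomorphism ring of $\mathbb M$ in the stable category via the equivalence $F$ of \eqref{eqq5}, and separately argue that the graded endomorphism ring coincides with the stable one. However, your two steps do not actually connect.

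Step 2, as you present it, computes $\End_{\underline{\mcm}A}(\mathbb M)$: the equivalence $F$ identifies $\End_{D^b(\mmod C(A))}(F(\mathbb M))$ with the \emph{stable} endomorphism ring, not with $\uExt^d_A(\kk_A,\mathbb M)$. Your step 1, on the other hand, identifies $\End_{\gr A}(\mathbb M)$ with $\uExt^d_A(\kk_A,\mathbb M)_{-d}$. To combine these you would need $\uExt^d_A(\kk_A,\mathbb M)_{-d}\cong\End_{\underline{\mcm}A}(\mathbb M)$, which you never establish. Concretely: the kernel of $\End_{\gr A}(\mathbb M)\to\uExt^d_A(\kk_A,\mathbb M)_{-d}$ consists of maps factoring through the \emph{specific} inclusion $\Omega^d(\kk_A)\hookrightarrow P^{-d+1}$, whereas the kernel of $\End_{\gr A}(\mathbb M)\to\End_{\underline{\mcm}A}(\mathbb M)$ is $\PHom_{\gr A}(\mathbb M,\mathbb M)$, the maps factoring through \emph{any} projective. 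Your degree argument kills the former but says nothing about the latter. The paper closes this gap differently: it invokes Proposition~\ref{qlem2} to see that $\mathbb M^\vee$ lives in degrees $\ge 1$, hence $\Hom_{\gr A}(\mathbb M,A)=0$, hence $\PHom_{\gr A}(\mathbb M,\mathbb M)=0$ and $\End_{\gr A}(\mathbb M)=\End_{\underline{\mcm}A}(\mathbb M)$ on the nose.

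A secondary remark: your step 2 is more laborious than necessary. The paper works with $\Omega^d(\kk_A)[d]$ rather than $\mathbb M=\Omega^d(\kk_A)(d)$, obtaining $F(\Omega^d(\kk_A)[d])\cong L(\pi(A^!))\cong C(A)$ directly from \eqref{eqq7} and \eqref{eqq8}, and only afterwards observes that the internal shift $(d)$ is harmless for $\End$. Tracking the shift through to $A^![\varpi^{-1}]_d$ and then arguing rank-one freeness (with the odd/even case split you flag) is correct but avoidable.
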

\begin{proof} (i) From the resolution (\ref{res}), we have the following exact triangle in $D^b(\gr A)$: $$\Omega^d(\kk_A)[d-1]\to P^\cdot\to \kk_A\to \Omega^d(\kk_A)[d]),$$ where $P^\cdot$ is the complex $0\to P^{-d+1}\overset{\partial^{-d+1}}\longrightarrow\cdots\overset{\partial^1}\longrightarrow P^0\to0$. Therefore
\begin{equation}\label{eqq6}
  \Omega^d(\kk_A)[d]\cong \kk_A
\end{equation}
in the quotient category $D^b(\gr A)/per A$. Let $F$ be the equivalence as in (\ref{eqq5}). Considering the equivalence functors (\ref{eqq1})--(\ref{eqq4}), we have $F(\Omega^d(\kk_A)[d])=LB(\Omega^d(\kk_A)[d])=L\overline{K}G(\Omega^d(\kk_A)[d])$.  By (\ref{eqq6}), $G(\Omega^d(\kk_A)[d])\cong \kk_A$. By (\ref{eqq1}) and (\ref{eqq7}), we have $F(\Omega^d(\kk_A)[d])\cong L\overline{K}(\kk_A)\cong L(\pi(A^!))$. By (\ref{eqq8}), $L(\pi(A^!))\cong C(A)$. Finally, we obtain that
\begin{equation}\label{eqqf}
  F(\Omega^d(\kk_A)[d])\cong C(A).
\end{equation}
Since $F$ is an equivalence, we have $$\End_{\underline{\mcm}(A)}(\Omega^d(\kk_A)[d])\cong \End_{D^b(\mmod C(A))}(C(A))\cong C(A),$$ whereas in the triangulated category $\underline{\mcm}A$, we have $$\End_{\underline{\mcm}(A)}(\mathbb M)= \End_{\underline{\mcm}(A)}(\Omega^d(\kk_A)(d)) \cong \End_{\underline{\mcm}(A)}(\Omega^d(\kk_A))\cong\End_{\underline{\mcm}(A)}(\Omega^d(\kk_A)[d]).$$ Therefore, $$\End_{\underline{\mcm}(A)}(\mathbb M)\cong C(A).$$
By Proposition \ref{qlem2}, $\mathbb M^\vee(1)$ is a Koszul module. In particular, $\mathbb M^\vee$ is generated in degree 1 and $\Hom_{\gr A}(\mathbb M,A)=0$, thus $\End_{\underline{\mcm}A}(\mathbb M)=\End_{\gr A}(M)$ and the desired isomorphism (i) follows.

The statement (ii) follows from \cite[Theorem 6.3]{HY} (see also, \cite[Theorem 4.13]{MU}).
\end{proof}

Since $A$ is a Koszul algebra, we may compute $\mathbb M$ and $\End_{\gr A}(\mathbb M)$ by Koszul resolution of $A$. Now assume $A=T(V)/(R)$ for some finite dimension vector space $V$ with generating relations $R\subseteq V\otimes V$. Let $C_0=\kk$, $C_1=V$, $C_2=R$ and $C_n=\bigcap_{i+j=n-2}V^{\otimes i}\otimes R\otimes V^{\otimes j}$ for $n>2$. Then the minimal projective resolution of $\kk_A$ reads as follows (cf. \cite{BGS}):
\begin{equation}\label{res2}
  \cdots \longrightarrow C_n\otimes A\overset{\partial^{-n}}\longrightarrow C_{n-1}\otimes A\overset{\partial^{n-1}}\longrightarrow\cdots\overset{\partial^{-1}}\longrightarrow C_0\otimes A\longrightarrow\kk_A\longrightarrow0,
\end{equation}
where the differential is defined as follows: if $\sum_{i=1}^mx_{1i}\otimes\cdots \otimes x_{ni}\in C_n$, for every $a\in A$, $$\partial^{-n}((\sum_{i=1}^mx_{1i}\otimes\cdots \otimes x_{ni})\otimes a)=\sum_{i=1}^m(x_{1i}\otimes\cdots \otimes x_{n-1,i})\otimes x_{ni}a.$$
By the above resolution, $\Omega^d(\kk_A)\cong \im\partial^{-d}$. From the exact sequence $$0\to \im\partial^{-d-1}\to C_d\otimes A\to\Omega^d(\kk_A)\to0,$$ we obtain
\begin{equation}\label{eqqend1}
  \End_{\gr A}(\Omega^d(\kk_A))=\{f\in\End_{\gr A}(C_d\otimes A)|f(\im\partial^{-d-1})\subseteq\im \partial^{-d-1}\}.
\end{equation}
 Notice that the restriction of $\partial^{-d-1}$ on $(C_{d+1}\otimes A)_{d+1}\cong C_{d+1}$ is injective, and every element $f\in \End_{\gr A}(C_d\otimes A)$ is defined by its restriction on $C_d$. It follows an isomorphism
\begin{equation}\label{eqqend2}
  \{f\in\End_{\gr A}(C_d\otimes A)|f(\im\partial^{-d-1})\subseteq\im \partial^{-d-1}\}\cong \{f\in\End_{\kk}(C_d)|(f\otimes 1)(C_{d+1})\subseteq C_{d+1}\}.
\end{equation}

Combining (\ref{eqqend1}) and (\ref{eqqend2}), we have the following isomorphism.

\begin{proposition} \label{prop2} Write $A=T(V)/(R)$. We have $$\End_{\gr A}(\mathbb M)\cong\{f\in\End_{\kk}(C_d)|(f\otimes 1)(C_{d+1})\subseteq C_{d+1}\},$$ where $C_n=\bigcap_{i+j=n-2}V^{\otimes i}\otimes R\otimes V^{\otimes j}$ for $n=d,d+1$, and $f\otimes1$ is viewed as a linear map in $\End_\kk(C_d\otimes V)$.
\end{proposition}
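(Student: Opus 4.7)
The plan is to verify the two identifications stated informally in equations (\ref{eqqend1}) and (\ref{eqqend2}) and then compose them, after first reducing from $\mathbb M$ to $\Omega^d(\kk_A)$ by the trivial grading-shift isomorphism $\End_{\gr A}(\mathbb M)=\End_{\gr A}(\Omega^d(\kk_A)(d))\cong \End_{\gr A}(\Omega^d(\kk_A))$.

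First I would handle (\ref{eqqend1}). Consider the short exact sequence $0\to\im\partial^{-d-1}\to C_d\otimes A\to \Omega^d(\kk_A)\to 0$. Since $C_d\otimes A$ is graded projective, every degree-zero graded endomorphism $\phi$ of $\Omega^d(\kk_A)$ lifts to some $f\in\End_{\gr A}(C_d\otimes A)$ satisfying $f(\im\partial^{-d-1})\subseteq\im\partial^{-d-1}$. The map $f\mapsto\phi$ is therefore surjective onto $\End_{\gr A}(\Omega^d(\kk_A))$; its kernel consists of those $f$ whose image is contained in $\im\partial^{-d-1}$. Here is the key observation: because $A$ is Koszul, $\im\partial^{-d-1}$ is generated in degree $d+1$, so $(\im\partial^{-d-1})_d=0$. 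A degree-zero graded $A$-linear map $f\colon C_d\otimes A\to C_d\otimes A$ is determined by its action on the generating space $C_d=(C_d\otimes A)_d$; if its image lies in $\im\partial^{-d-1}$ then $f|_{C_d}=0$ and hence $f=0$. This rules out the kernel and establishes (\ref{eqqend1}) as a genuine equality (not just a surjection).

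Next I would establish (\ref{eqqend2}). The assignment $f\mapsto f|_{C_d}$ gives an isomorphism $\End_{\gr A}(C_d\otimes A)\cong\End_\kk(C_d)$, with inverse $g\mapsto g\otimes 1$. Under this identification I need to translate the stability condition $f(\im\partial^{-d-1})\subseteq\im\partial^{-d-1}$. The crux is that the restriction of the Koszul differential $\partial^{-d-1}$ to the degree-$(d+1)$ piece $(C_{d+1}\otimes A)_{d+1}=C_{d+1}$ is, by the explicit formula given after (\ref{res2}), nothing but the tautological inclusion $C_{d+1}\hookrightarrow C_d\otimes V=(C_d\otimes A)_{d+1}$ coming from the definition $C_{d+1}=\bigcap_{i+j=d-1}V^{\otimes i}\otimes R\otimes V^{\otimes j}\subseteq C_d\otimes V$. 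Thus $\im\partial^{-d-1}$ is precisely the graded $A$-submodule of $C_d\otimes A$ generated by the degree-$(d+1)$ subspace $C_{d+1}\subseteq C_d\otimes V$. Since $f\otimes 1$ is $A$-linear, stability of this submodule is equivalent to stability of this generating subspace: that is, to $(f\otimes 1)(C_{d+1})\subseteq C_{d+1}$.

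Combining the two identifications yields the desired isomorphism. The step I expect to be the main obstacle is the explicit verification that, under the Koszul formula for $\partial^{-d-1}$, the degree-$(d+1)$ component of $\im\partial^{-d-1}$ really is the standard copy of $C_{d+1}$ inside $C_d\otimes V$ (rather than a twisted or permuted copy); once this combinatorial matching is pinned down, the rest is a routine book-keeping check on graded pieces and on $A$-linear extension.
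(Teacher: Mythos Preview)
Your proposal is correct and follows essentially the same approach as the paper: the paper also proceeds by establishing (\ref{eqqend1}) from the short exact sequence and (\ref{eqqend2}) from the identification $\End_{\gr A}(C_d\otimes A)\cong\End_\kk(C_d)$, then combines them. You have supplied more detail than the paper does---in particular your injectivity argument for (\ref{eqqend1}) via $(\im\partial^{-d-1})_d=0$ and your explicit identification of the degree-$(d+1)$ generators---but the structure and key ideas are the same.
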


\begin{remark} The proposition above provides a relatively easy way to compute the endomorphism ring of $\mathbb M$, especially when $d$ is small. We will compute a detailed example of noncommutative quadric hypersurface of dimension 2 in the next section. According to Theorem \ref{qthm1} and the proposition above, to find whether $A$ is a noncommutative isolated singularity is a linear algebra problem.
\end{remark}

Let us check the MCM modules when $A$ is a noncommutative isolated singularity.

\begin{lemma}\label{lem-ind} Assume that $A$ is a noncommutative isolated singularity. Each nonprojective indecomposable MCM $A$-module is isomorphic to a direct summand of $\mathbb M$ (up to a degree shift).
\end{lemma}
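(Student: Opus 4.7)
The plan is to apply the Smith--Van den Bergh equivalence $F\colon \underline{\mcm}A\to D^b(\mmod C(A))$ of (\ref{eqq5}), together with the semisimplicity of $C(A)$ supplied by Theorem \ref{qthm1}(ii) and the matrix-factorization periodicity available for the quadric hypersurface $A$.

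Semisimplicity of $C(A)$ forces every indecomposable object of $D^b(\mmod C(A))$ to be a suspension $S[k]$ of a simple $C(A)$-module; hence each nonprojective indecomposable MCM module $N$ is determined by a pair $(S_N,k_N)$ with $F(N)\cong S_N[k_N]$. Combining $\mathbb{M}=\Omega^d(\kk_A)(d)$ with the identification $F(\Omega^d(\kk_A)[d])\cong C(A)$ from (\ref{eqqf}) and the fact that $F$ is triangle-equivariant, one computes $F(\mathbb{M})\cong \sigma^d(C(A))$ concentrated in homological degree $0$, where $\sigma_j$ denotes the autoequivalence of $D^b(\mmod C(A))$ transported from the degree shift $(j)$ on $\underline{\mcm}A$, and $\sigma$ is the order-two autoequivalence $[-1]\circ\sigma_1$ of $\mmod C(A)$.

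The decisive step is the periodicity $[2]\cong (2)$ on $\underline{\mcm}A$: because $w$ is central regular of degree $2$, every MCM $A$-module lifts to a graded matrix factorization of $w$ over $S$, yielding $\Omega^2 M\cong M(-2)$. Transporting through $F$, this forces $\sigma_2=[2]$ and hence $\sigma_j=[j]\circ\sigma^j$, where $\sigma^2=\mathrm{id}$ merely permutes the finite set of isomorphism classes of simple $C(A)$-modules. Applying $\sigma_j$ to $F(\mathbb{M})$ then yields
\[
F(\mathbb{M}(j))\;\cong\;\sigma^{j+d}(C(A))[j],
\]
which in homological degree $j$ contains every simple $C(A)$-module as a direct summand.

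To conclude, given any $N$ with $F(N)\cong S_N[k_N]$, set $j=k_N$: then $S_N[k_N]$ is a direct summand of $F(\mathbb{M}(k_N))$, so $N$ is a direct summand of $\mathbb{M}(k_N)$ in $\underline{\mcm}A$. Since $\mathbb{M}$, as a syzygy in a minimal projective resolution of $\kk_A$, has no projective summand, and $N$ is nonprojective indecomposable, Krull--Schmidt for MCM modules lifts this to an isomorphism in $\gr A$. The principal hurdle is justifying the periodicity $[2]\cong (2)$ in the noncommutative setting; this should be addressed either by citing the noncommutative matrix-factorization equivalence or by an explicit construction using the central regular element $w$, after which the remainder of the argument is routine bookkeeping with the semisimple algebra $C(A)$.
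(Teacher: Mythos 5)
Your overall strategy is the same as the paper's: push the problem through the Smith--Van den Bergh equivalence $F\colon\underline{\mcm}A\to D^b(\mmod C(A))$, use semisimplicity of $C(A)$ to describe the indecomposables of the target, and show that the degree shifts $\mathbb{M}(j)$ sweep them out. The paper's proof is three sentences; yours expands exactly this skeleton, so I do not view the two as genuinely different arguments.

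That said, the step you single out as ``the principal hurdle'' is not where the real content lies, and the way you use it leaves a gap. Periodicity $\Omega^2M\cong M(-2)$ for graded matrix factorizations is indeed true, and it does give $\sigma_2=[2]$. But from $\sigma_1^2=[2]$ alone you cannot conclude that $\sigma:=[-1]\circ\sigma_1$ lands in $\mmod C(A)$: a triangulated autoequivalence of $D^b(\mmod C(A))$ may shift different blocks of the semisimple algebra $C(A)$ by different amounts. For instance, if $C(A)=\kk\times\kk$ with simples $S_1,S_2$ and $\sigma_1(S_1)=S_2$, $\sigma_1(S_2)=S_1[2]$, then $\sigma_1^2=[2]$ holds, yet the family $\sigma_j(C(A))$ for $j\in\Z$ only ever produces summands $S_i[\mathrm{even}]$ and the lemma's conclusion would fail. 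So periodicity is not the lever; it is a consequence, not the cause, of the fact you actually need.

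The fact you need is that $\sigma_1$ is $[-1]$ composed with an \emph{abelian} autoequivalence of $\mmod C(A)$, i.e.\ that it shifts every block by exactly the same amount, namely by one. This is precisely what the paper builds into the setup: relation (\ref{eqq7}) says $K(M(1))=K(M)[-1](1)$, so $\overline{K}$ converts $(1)$ into $[-1]$ followed by the degree shift $(1)$ on $\qgr A^!$, and $L$ converts that remaining $(1)$ into an autoequivalence of $\mmod C(A)$ (because $A^![\varpi^{-1}]$ is strongly $\mathbb{Z}$-graded, so $\gr A^![\varpi^{-1}]\simeq\mmod C(A)$ and degree shift is a Morita twist). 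Once you know $\sigma_1=[\mp 1]\circ\tau$ with $\tau$ an exact autoequivalence of $\mmod C(A)$ permuting the simples, the bookkeeping you describe---including the recovery of $\Omega^d(\kk_A)$ from $F(\Omega^d(\kk_A))\cong C(A)[-d]$---is immediate and you never need to invoke matrix factorizations. So: right approach, right conclusion, but the justification you flag as the hurdle should be replaced by a direct appeal to (\ref{eqq7}) and the strongly-graded structure of $A^![\varpi^{-1}]$; the periodicity argument by itself does not close the step it is asked to close.
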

\begin{proof} By the isomorphism (\ref{eqqf}) in the proof of Theorem \ref{qthm1}, $F(\Omega^d(\kk_A))\cong C(A)[-d]$. Since $C(A)$ is semisimple and $F$ is an equivalence, all the indecomposable objects in $\underline{\mcm} A$ are direct summands of $\Omega^d(\kk_A)$ (up to degree shifts). Notice that the class of nonprojective indecomposable MCM module over $A$ is in one-to-one correspondence to the class of indecomposable objects in $\underline{\mcm}A$ (cf. \cite[Lemma 3.4]{SvdB}). Since $\mathbb M$ is a shift of $\Omega^d(\kk_A)$, the result follows.
\end{proof}

We have the following properties of indecomposable MCM modules.

\begin{proposition}\label{prop-M} Retain the notation as above and keep Setup \ref{setup}. Assume that $A$ is a noncommutative isolated singularity.
\begin{itemize}
  \item [(i)] For $n\ge d$, $\dim\Omega^n(\kk_A)_{n}=\frac{1}{2}\dim S^!$, where $\Omega^n(\kk_A)_n$ is the degree $n$ part of the graded module $\Omega^n(\kk_A)$.
  \item [(ii)] $\dim \mathbb M_0=\dim\End_{\gr A}(\mathbb M)$.
  \item [(iii)] Suppose that $\End_{\gr A}(\mathbb M)$ is a direct product of $\kk$. If $N$ is an indecomposable MCM module, then $N(i)\cong A/xA$ for some $i\in \mathbb Z$ and some element $x\in A_1$. Moreover, $\Omega(\mathbb M)\cong\mathbb M(-1)$.
\end{itemize}
\end{proposition}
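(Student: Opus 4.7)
For part (i), note that the minimal Koszul resolution (\ref{res2}) identifies $\Omega^n(\kk_A)_n$ with $C_n$, which has dimension $\dim A^!_n$ by Koszulity. The central regular element $\varpi \in A^!_2$ from Lemma \ref{qlem1}(iii) yields a short exact sequence $0 \to A^!(-2) \xrightarrow{\cdot \varpi} A^! \to S^! \to 0$, whence $\dim A^!_n = \dim A^!_{n-2} + \dim S^!_n$. Iterating and using that $S^!_j = 0$ for $j > d+1$, one finds that $\dim A^!_n$ equals the sum of $\dim S^!_j$ over $j \in [0,d+1]$ with $j \equiv n \pmod 2$. Since $H_{S^!}(t) = (1+t)^{d+1}$, the even and odd parts of $S^!$ each have dimension $2^d = \tfrac12\dim S^!$, and for $n \ge d$ a parity check shows the relevant sum equals this common value. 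Part (ii) then follows immediately: $\mathbb M_0 = \Omega^d(\kk_A)_d$ has dimension $\tfrac12\dim S^!$ by (i), while $\dim \End_{\gr A}(\mathbb M) = \dim C(A) = \tfrac12\dim S^!$ by Theorem \ref{qthm1}(i) together with equation (\ref{eqqdim}).

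For (iii), write $m = \dim \mathbb M_0$ and assume $\End_{\gr A}(\mathbb M) \cong \kk^m$. Since $\mathbb M$ is Koszul (hence generated in degree $0$), the restriction map $\End_{\gr A}(\mathbb M) \hookrightarrow \End_\kk(\mathbb M_0)$ is injective, so the $m$ primitive idempotents decompose $\mathbb M = M_1 \oplus \cdots \oplus M_m$ into pairwise non-isomorphic indecomposable summands with $\dim(M_i)_0 = 1$. Koszulity of each $M_i$ gives $M_i \cong A/I_i$ with $I_i = J_iA$ for some $J_i \subseteq A_1$, and $\mathbb M$ has no projective summand (being the image of a minimal differential in the Koszul resolution of $\kk_A$), so $\dim J_i \ge 1$. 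On the other hand, the short exact sequence $0 \to \Omega^{d+1}(\kk_A) \to C_d \otimes A \to \Omega^d(\kk_A) \to 0$ in degree $d+1$ gives $\dim \mathbb M_1 = \dim C_d\cdot\dim A_1 - \dim C_{d+1} = 2^d(d+1) - 2^d = md$ by part (i) and $\dim A_1 = d+1$. Then $\sum_i \dim J_i = m(d+1) - md = m$, forcing $\dim J_i = 1$ for every $i$. Thus $M_i \cong A/x_iA$ for some $x_i \in A_1$, which combined with Lemma \ref{lem-ind} proves the first assertion of (iii).

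For the syzygy claim, observe that $\Omega(\mathbb M) = \bigoplus_i x_iA$. Each $x_iA$ is cyclic, generated by $x_i$ in degree $1$, and a direct computation using $(x_iA)_1 = \kk x_i$ shows $\End_{\gr A}(x_iA) = \kk$; in particular $x_iA$ is indecomposable, and it is nonprojective because $M_i$ is. By Lemma \ref{lem-ind}, $x_iA \cong M_{\sigma(i)}(k_i)$, and comparing generator degrees forces $k_i = -1$. As $\Omega$ is an autoequivalence of $\underline{\mcm}A$ and the $M_j$ are pairwise non-isomorphic, the induced assignment $\sigma$ must be a bijection of $\{1,\dots,m\}$. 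Hence $\Omega(\mathbb M) \cong \bigoplus_i M_{\sigma(i)}(-1) \cong \mathbb M(-1)$. The main technical point throughout is the counting step that pins down $\dim J_i = 1$; it rests squarely on the dimension identity $\dim C_d = \dim C_{d+1} = 2^d$ from (i), after which the remaining assertions follow by Koszulity, Krull--Schmidt, and Lemma \ref{lem-ind}.
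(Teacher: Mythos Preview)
Your proof is correct and follows essentially the same route as the paper. Parts (i) and (ii) coincide verbatim with the paper's argument; in part (iii) your count $\sum_i\dim J_i=m$ via $\dim\mathbb M_1=md$ is exactly the paper's computation $\dim\Omega(\mathbb M)_1=\dim\Omega^{d+1}(\kk_A)_{d+1}=m$ read from the other side of the short exact sequence $0\to\Omega(\mathbb M)\to A^m\to\mathbb M\to 0$, and the syzygy conclusion $\Omega(\mathbb M)\cong\mathbb M(-1)$ is obtained identically via Lemma~\ref{lem-ind} and the bijectivity of~$\sigma$. The one cosmetic difference is your reason for $\mathbb M$ having no projective summand: you invoke minimality of the Koszul resolution, whereas the paper argues directly that a projective summand would force $M_i\cong A$ and hence $\Hom_{\gr A}(M_i,M_j)=(M_j)_0\neq 0$, contradicting $\End_{\gr A}(\mathbb M)\cong\kk^m$; the latter is more self-contained here (and is also immediate from $\Hom_{\gr A}(\mathbb M,A)=\mathbb M^\vee_0=0$ by Proposition~\ref{qlem2}).
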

\begin{proof} (i) By Lemma \ref{qlem1}(ii), we have the following exact sequence $$0\longrightarrow A^!(-2)\overset{\cdot\varpi}\longrightarrow A^!\longrightarrow S^!\longrightarrow0.$$ Then we have $\dim A^!_i=\dim A^!_{i-2}+\dim S^!_i$ for all $i\ge2$, and $\dim A^!_i=\dim S^!_i$ for $i=0,1$. Then by an iterative computation, we have
$$\dim A^!_n=\left\{
               \begin{array}{ll}
                 \dim S_0^!+\dim S_2^!+\cdots+S^!_n, & \hbox{when $n$ is even;} \\
                 \dim S_1^!+\dim S_3^!+\cdots+S^!_n, & \hbox{when $n$ is odd.}
               \end{array}
             \right.
$$
Since $S$ is a quantum polynomial algebra, $H_{S^!}(t)=(1+t)^{d+1}$. Therefore $\dim A^!_n=\frac{1}{2}\dim S^!$  for $n\ge d$. Since $A$ is a Koszul algebra and $\dim A^!_n=\dim \Hom_{\gr A}(\Omega^n(\kk_A)(n),\kk_A)=\dim \Omega^n(\kk_A)_n$.

(ii) By Theorem \ref{qthm1}, $\End_{\gr A}(\mathbb M)\cong C(A)$. Hence $\dim\End_{\gr A}(\mathbb M)=\dim C(A)=\frac{1}{2}\dim S^!$ (see Equation (\ref{eqqdim})). Since $\mathbb M=\Omega^d(\kk_A)(d)$, the identity follow from (i).

(iii) By Lemma \ref{lem-ind}, it suffices to show that the result holds for each indecomposable direct summand $N$ of $\mathbb M$.

Assume $\mathbb M=\mathbb M^1\oplus\cdots\oplus\mathbb M^s$, where each $\mathbb M^i$ is indecomposable. By assumption $\End_{\gr A}(\mathbb M)$ is a direct product of $\kk$, which forces that $\End_{\gr A}(\mathbb M_i)=\kk$ for all $i$ and $\Hom_{\gr A}(\mathbb M_i, \mathbb M_j)=0$ for $i\ne j$. Hence $s=\dim \End_{\gr A}(\mathbb M)$, and $\mathbb M$ has no projective direct summands, otherwise if some $\mathbb M_i$ is projective, then $\Hom_{\gr A}(\mathbb M_i, \mathbb M_j)\ne 0$ for any $j$. Since $\mathbb M$ is a Koszul module, each $\mathbb M^i$ is a Koszul module. Hence $\mathbb M^i$ is generated in degree 0 for every $i$. By (i) and (ii), $s=\dim \mathbb M_0=\frac{1}{2}\dim S^!$. Therefore, each $\mathbb M^i$ is a cyclic module. Hence we have the following exact sequence $$0\to\Omega(\mathbb M^i)\to A\to \mathbb M^i\to0,\text{ for all }1\leq i\leq s.$$
Then $\Omega^{d+1}(\kk_A)(d)\cong \Omega(\mathbb M)\cong \bigoplus_{i=1}^s\Omega(\mathbb M^i)$. By (i), $\dim\Omega^{d+1}(\kk_A)_{d+1}=s$. Hence $\dim\Omega(\mathbb M^i)_1=1$ for all $1\leq i\leq s$ since each $\mathbb M^i$ is not projective. Note that $\Omega(\mathbb M^i)$ is generated in degree 1. It follows that there is an element $x_i\in A_1$ such that $\Omega(\mathbb M^i)\cong x_iA$ for all $1\leq i\leq s$. Hence $\mathbb M^i\cong A/x_iA$.

Since $\dim \End_{\gr A}(\mathbb M)=\dim \mathbb M_0$, $\Hom_{\gr A}(\mathbb M^i,\mathbb M^j)=0$ if $i\neq j$. Hence $\mathbb M^i\ncong\mathbb M^j$ if $i\neq j$, which implies $\Omega(\mathbb M^i)\ncong\Omega(\mathbb M^j)$ for $i\neq j$ since $\Omega=[-1]$ is the suspension functor in the triangulated category $\underline{\mcm}A$. Since each $\Omega(\mathbb M^i)$ is indecomposable and generated in degree 1, it follows that the set $\{\Omega(\mathbb M^1),\dots,\Omega(\mathbb M^s)\}=\{\mathbb M^1(-1),\dots,\mathbb M^s(-1)\}$ by Lemma \ref{lem-ind}. Therefore $\Omega(\mathbb M)\cong \mathbb M(-1)$.
\end{proof}

Lemma \ref{lem-ind} also provides a way to construct noncommutative resolution of noncommutative isolated singularities.

\begin{theorem}\label{thm-res-iso} Keep the notions in Setup \ref{setup}. Assume that $A$ is a noncommutative isolated singularity.
Then $B=\uEnd_A(\mathbb M\oplus A)$ is a right pre-resolution of $A$. Moreover, $B$ is concentrated in nonnegative degrees.
\end{theorem}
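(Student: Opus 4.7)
The plan is to assemble ingredients already in the paper. Namely, Lemma \ref{lem-ind} says that $\mathbb M$ already covers every nonprojective indecomposable MCM module up to degree shift, so $M:=\mathbb M\oplus A$ plays the role of the canonical MCM generator appearing in Theorem \ref{thm2}. I would then essentially transcribe the argument of Theorem \ref{thm2} to show that $B=\uEnd_A(M)$ is a right pre-resolution, and verify the nonnegativity of the grading by direct inspection of the four blocks of $B$.

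First I would note that $M$ is an MCM module: $\mathbb M$ is MCM by construction (a shifted syzygy of $\kk_A$) and $A$ is MCM because $A$ is AS-Gorenstein. Since $A$ is an isolated singularity, Theorem \ref{thm1} and Corollary \ref{cor1} show that $B$ is right noetherian and that $M$ is small. Proposition \ref{prop1} (applied with $\mathbb M$ in place of the ``$M$'' there) then furnishes an equivalence $\qgr B\cong\qgr A$; one checks that this equivalence is given by $\mathcal F=\uHom_A(M,-)$ by repeating the argument of Theorem \ref{thm2}, where Lemma \ref{lem2} yields a natural isomorphism between $\mathcal F$ and $\mathcal G=-\otimes_A M^\vee$ after passing to $\qgr$. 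This establishes condition (ii) of Definition \ref{def3}. For condition (i), Lemma \ref{lem-ind} guarantees that $\add(M)$ contains every indecomposable MCM module up to degree shift, which is precisely the hypothesis needed to invoke \cite[Theorem~5.4]{CKWZ} as in the proof of Theorem \ref{thm2}, giving $\rgldim(B)=d$. Thus $B$ is a right pre-resolution of $A$.

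For the ``moreover'' part I would use the matrix presentation
\[
B\;\cong\;\begin{pmatrix}\uEnd_A(\mathbb M)&\uHom_A(A,\mathbb M)\\ \uHom_A(\mathbb M,A)&A\end{pmatrix}.
\]
Since $A$ is Koszul, $\Omega^d(\kk_A)$ is generated in degree $d$, so $\mathbb M=\Omega^d(\kk_A)(d)$ is generated in degree $0$ and lives in nonnegative degrees; $A$ is likewise $\mathbb N$-graded. Any graded homomorphism of degree $n<0$ between $\mathbb M$ and $A$ (or between their shifts) is determined by the images of the degree-$0$ generators of the source, which must land in the degree-$n$ component of the target; both $\mathbb M_n$ and $A_n$ vanish for $n<0$. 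Hence each block of $B$, and therefore $B$ itself, is concentrated in nonnegative degrees.

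The only delicate point I anticipate is that $\mathbb M$ may contain repeated or degree-shifted copies of the same indecomposable summand, so that $M$ differs from the ``canonical'' MCM generator $\bigoplus_{i=1}^n P_i\oplus A$ of Theorem \ref{thm2}. However, the additive closure of $M$ under degree shifts coincides with that of the canonical generator, and this is all that is actually used by \cite[Theorem~5.4]{CKWZ} in producing the global-dimension bound; so this subtlety does not affect the conclusion.
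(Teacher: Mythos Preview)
Your proposal is correct and follows essentially the same approach as the paper: the paper's proof simply cites Theorem~\ref{thm2} together with Lemma~\ref{lem-ind} for the pre-resolution part and the Koszulity of $\mathbb M$ for the nonnegativity, and you have faithfully unpacked those citations (including the additive-closure subtlety implicit in invoking Theorem~\ref{thm2} for $\mathbb M\oplus A$ rather than for the canonical generator). Your block-by-block verification of nonnegativity is exactly the content of the Remark following the theorem in the paper.
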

\begin{proof} That $B$ is a right pre-resolution of $A$ follows from Theorem \ref{thm2} and Lemma \ref{lem-ind}.
Since $\mathbb M$ is a Koszul module, $B$ is concentrated in nonnegative degrees.
\end{proof}

\begin{remark}
The algebra $B$ is isomorphic to the matrix algebra $\left(
                                                       \begin{array}{cc}
                                                         \uEnd_A(\mathbb M) & \mathbb M \\
                                                         \mathbb M^\vee & A \\
                                                       \end{array}
                                                     \right)
$. By Proposition \ref{qlem2}, $\mathbb M^\vee$ is concentrated in degrees not less than 1. By Proposition \ref{qlem3}(ii), every element $f\in\uEnd_A(\mathbb M)_{\ge 1}$ factors through a projective module. By \cite[Proposition 7.5.1]{McR},
$$B_0= \left(
                                                       \begin{array}{cc}
                                                         \End_{\gr A}(\mathbb M) & \mathbb M_0 \\
                                                         0 & \kk \\
                                                       \end{array}
                                                     \right)$$
is of global dimension 1, since $\End_{\gr A}(\mathbb M)$ is semisimple by assumption.
 In the next section, we give a concrete example with detailed computations of elements of $B$.
\end{remark}

\section{An example}

In this section, we give a detailed computation of indecomposable MCM module of an explicit noncommutative quadric hypersurfaces. Let $\kk=\mathbb C$, let $S=\kk\langle x,y,z\rangle/(R)$, where $R=\text{span}\{xz+zx,yz+zy, x^2+y^2\}$. Then $S$ is a quantum polynomial algebra of global dimension 3, which is an AS-regular algebra of type $S_2$ as listed in \cite[Table 3.11, P.183]{AS}.

The following facts were proved in \cite[Setion 9]{HY}, see also \cite{Hu} for a complete classification of noncommutative conics.
\begin{lemma}
Let $\varpi=x^2+z^2\in S_2$. Then
\begin{itemize}
  \item [(i)] $\varpi$ is a central regular element of $S$,
  \item [(ii)] $A=S/S\varpi$ is a noncommutative isolated singularity ,
  \item [(iii)] $C(A)\cong \kk^4$, where $C(A)$ is the algebra defined in (\ref{def-CA}).
\end{itemize}
\end{lemma}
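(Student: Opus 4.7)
The plan is to verify (i) by direct calculation in $S$, and to reduce (ii) and (iii) to a single computation of $\End_{\gr A}(\mathbb M)$ via Theorem~\ref{qthm1} and Proposition~\ref{prop2}.

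For (i), centrality of $\varpi = x^2 + z^2$ is checked on the three generators of $S$. The relation $xz + zx = 0$ yields $xz^2 = -zxz = z^2 x$ and $zx^2 = -xzx = x^2 z$, so $x\varpi = \varpi x$ and $z\varpi = \varpi z$. The relation $yz + zy = 0$ gives $yz^2 = z^2 y$, and from $x^2 + y^2 = 0$ one has $yx^2 = -y^3 = x^2 y$; hence $y\varpi = \varpi y$. For regularity, recall that a noetherian AS-regular algebra of global dimension $3$ generated in degree one is a domain (classical; cf.~\cite{AS}), so multiplication by the nonzero element $\varpi \in S_2$ is injective. Equivalently, the sequence $0 \to S(-2) \to S \to A \to 0$ is exact, giving $H_A(t) = (1+t)/(1-t)^2$.

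For (ii) and (iii), Theorem~\ref{qthm1}(i) yields $C(A) \cong \End_{\gr A}(\mathbb M)$, and Theorem~\ref{qthm1}(ii) shows that $A$ is a noncommutative isolated singularity precisely when this algebra is semisimple. Both statements will therefore follow once one establishes $\End_{\gr A}(\mathbb M) \cong \kk^4$. Write $A = T(V)/(R')$ with $V = \kk x \oplus \kk y \oplus \kk z$ and
$$R' = \operatorname{span}\{xz+zx,\ yz+zy,\ x^2+y^2,\ x^2+z^2\}.$$
Since $\injdim A = \gldim S - 1 = 2$, Proposition~\ref{prop2} identifies
$$\End_{\gr A}(\mathbb M) \cong \{f \in \End_\kk(C_2) \colon (f\otimes 1)(C_3) \subseteq C_3\},$$
where $C_2 = R'$ and $C_3 = (V\otimes R') \cap (R'\otimes V) \subseteq V^{\otimes 3}$. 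The Hilbert-series identity in the proof of Proposition~\ref{prop-M}(i), combined with $H_{S^!}(t) = (1+t)^3$, forces $\dim C_3 = \dim A^!_3 = \dim A^!_1 + \dim S^!_3 = 3 + 1 = 4$.

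The remaining task is a linear-algebra computation. I would first fix an explicit basis of $C_3$; a natural route is to compute the Koszul dual $A^! = T(V^*)/(R'^{\perp})$, read off its four-dimensional degree three piece, and transfer back to $C_3$ via the duality pairing. Once such a basis is fixed, the condition $(f\otimes 1)(C_3) \subseteq C_3$ becomes a linear system in the sixteen matrix entries of $f$, and I expect its solution space to be a four-dimensional commutative subalgebra which, after a suitable change of basis of $C_2$, splits as $\kk^4$. The principal obstacle is obtaining a basis of $C_3$ clean enough that the resulting $16\times 16$ linear system and its idempotent decomposition can be read off transparently; once that is in hand, (ii) and (iii) both drop out.
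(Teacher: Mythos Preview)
The paper does not prove this lemma at all: it is stated with the preface ``The following facts were proved in \cite[Section 9]{HY}, see also \cite{Hu}'' and is taken as input. Your proposal therefore goes beyond what the paper does for the lemma itself.

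That said, your strategy for (ii) and (iii) is exactly the computation the paper carries out \emph{immediately after} the lemma, for the purpose of determining the indecomposable MCM modules. There the authors write down the four-element basis of $R'=C_2$ and the four-element basis of $C_3=R'\otimes V\cap V\otimes R'$ explicitly, present $\mathbb M$ as a quotient of a rank-four free module with relations $r_1,\dots,r_4$ corresponding to that basis of $C_3$, and then solve the linear system $\{\theta\in\End_{\gr A}(F)\mid \theta(r_i)\in K_1\}$, obtaining a four-parameter family of $4\times 4$ matrices and a complete set of four primitive orthogonal idempotents. This is precisely your Proposition~\ref{prop2} approach, just phrased in terms of the free presentation of $\mathbb M$ rather than the pair $(C_2,C_3)$; so the ``clean basis of $C_3$'' you anticipate needing is already supplied there. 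Your verification of (i) by checking centrality on generators and invoking the domain property of noetherian AS-regular algebras of dimension three is standard and correct. One small caution: your appeal to the Hilbert-series identity from the proof of Proposition~\ref{prop-M}(i) is legitimate because that part of the argument does not use the isolated-singularity hypothesis, but you should flag this explicitly since the proposition as stated assumes it.
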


Consider the Koszul resolution
$$\cdots\longrightarrow (R\otimes V\cap V\otimes R)\otimes A \overset{\partial^{-3}}\longrightarrow R\otimes A\overset{\partial^{-2}}\longrightarrow V\otimes A\overset{\partial^{-1}}\longrightarrow A\longrightarrow\kk_A\longrightarrow0$$  of $\kk_A$, where $V=\text{span}\{x,y,z\}$.
Clearly $R$ has a basis
\[ x\otimes z + z\otimes x,\ y\otimes z+z\otimes y,\ x\otimes x+ y\otimes y,\ x\otimes x + z\otimes z,
\]
and by direct calculation, $R\otimes V\cap V\otimes R$ has a basis
\begin{gather*}
(x\otimes z + z\otimes x)\otimes x+ (y\otimes z + z\otimes y)\otimes y+(x\otimes x + y\otimes y)\otimes z, \\
2(x\otimes z + z\otimes x)\otimes x+ (y\otimes z + z\otimes y)\otimes y+(x\otimes x + y\otimes y)\otimes z +  (x\otimes x + z\otimes z)\otimes z, \\
(y\otimes z + z\otimes y)\otimes z -(x\otimes x + y\otimes y)\otimes y +  (x\otimes x + z\otimes z)\otimes y, \\
(x\otimes z + z\otimes x)\otimes z+ (y\otimes z + z\otimes y)\otimes z -(x\otimes x + y\otimes y)\otimes y +  (x\otimes x + z\otimes z)\otimes (x+y).
\end{gather*}

Set $\mathbb M=\Omega^2(\kk_A)(2)= \im \partial^{-2}(2)$.
 Then $\mathbb M$ is a Koszul module which is generated by $\frac{1}{2}\dim S^!(= 4)$ elements, see Proposition \ref{prop-M}(i). The generating relations of $\mathbb M$ is equal to $\im \partial^{-3}(2)$ which is also generated by 4 elements. Thus we may write $\mathbb M$ as a quotient module of a free module:
$$0\longrightarrow K\longrightarrow m_1A\oplus m_2A\oplus m_3A\oplus m_4A\longrightarrow\mathbb M\longrightarrow0,$$ where  $\{m_1,m_2,m_3,m_4\}$ is a free basis, and $K$ is the submodule generated by \begin{eqnarray*}
                                                          r_1 &=& m_1x+m_2y+m_3z, \\
                                                          r_2 &=& 2m_1x+m_2 y+m_3z+m_4z,\\
                                                          r_3 &=& m_2z-m_3y+m_4y,\\
                                                          r_4 &=& m_1z+m_2z-m_3y+m_4(x+y).
                                                        \end{eqnarray*}

To find indecomposable MCM $A$-modules, we only need to find a set of primitive idempotents of $\End_{\gr A}(\mathbb M)$ by Lemma \ref{lem-ind}. Let $F=m_1A\oplus m_2A\oplus m_3A\oplus m_4A$. Note that degree one part of $K$ is $K_1=\text{span}\{r_1,r_2,r_3,r_4\}$. We have $$\End_{\gr A}(\mathbb M)=\{\theta\in\End_{\gr A}(F)|\theta(r_i)\in K_1, i=1,2,3,4\}.$$
By some computations on linear equations, we have
$$\End_{\gr A}(\mathbb M)=\left\{\left(
                            \begin{array}{cccc}
                              b+d & 0 & a & a \\
                              0 & b & c & 0 \\
                              0 & -c & b & 0 \\
                              a & c & d & b+d \\
                            \end{array}
                          \right):a,b,c,d\in\kk\right\}.
$$
We have the following complete set of primitive idempotents in $\End_{\gr A}(\mathbb M)$:
$$e_1=\left(
                            \begin{array}{cccc}
                              0 & 0 & 0 & 0 \\
                              0 & \frac{1}{2} & \frac{1}{2}i & 0 \\
                              0 & -\frac{1}{2}i & \frac{1}{2} & 0 \\
                              0 & \frac{1}{2}i & -\frac{1}{2} & 0 \\
                            \end{array}
                          \right),\quad
                          e_2=\left(\begin{array}{cccc}
                              0 & 0 & 0 & 0 \\
                              0 & \frac{1}{2} & -\frac{1}{2}i & 0 \\
                              0 & \frac{1}{2}i & \frac{1}{2} & 0 \\
                              0 & -\frac{1}{2}i & -\frac{1}{2} & 0 \\
                            \end{array}
                          \right),$$
$$e_3=\left(
                            \begin{array}{cccc}
                              \frac{1}{2} & 0 & \frac{1}{2} & \frac{1}{2} \\
                              0 & 0 & 0 & 0 \\
                              0 & 0 & 0 & 0 \\
                              \frac{1}{2} & 0 & \frac{1}{2} & \frac{1}{2} \\
                            \end{array}
                          \right),\quad
                          e_4=\left(\begin{array}{cccc}
                              \frac{1}{2} & 0 & -\frac{1}{2} & -\frac{1}{2} \\
                              0 & 0 & 0 & 0 \\
                              0 & 0 & 0 & 0 \\
                             -\frac{1}{2} & 0 & \frac{1}{2} & \frac{1}{2} \\
                            \end{array}
                          \right).$$

Therefore, we have the following nonprojective nonisomorphic indecomposable MCM modules:
$$\begin{array}{ll}
  \mathbb M^1&=\mathbb Me_1=(\overline{m}_2-i\overline{m}_3+i\overline{m}_4)A,\\
  \mathbb M^2&=\mathbb Me_2=(\overline{m}_2+i\overline{m}_3-i\overline{m}_4)A,\\
  \mathbb M^3&=\mathbb Me_3=(\overline{m}_1+\overline{m}_4)A,\\
  \mathbb M^4&=\mathbb Me_4=(\overline{m}_1-\overline{m}_4)A,\\
\end{array}$$
where $\overline{m}_1,\overline{m}_2,\overline{m}_3,\overline{m}_4$ are the image of $m_1,m_2,m_3,m_4$ in $\mathbb M$, and $i=\sqrt{-1}$ is a square root of $-1$.

Since $\mathbb M^1,\dots,\mathbb M^4$ are Koszul modules, a straightforward check shows that
$$\mathbb M^1\cong A/(y+iz)A,\ \mathbb M^2\cong A/(y-iz)A,\ \mathbb M^3\cong A/(x+z)A,\ \mathbb M^4\cong A/(x-z)A.$$
Note that $y+iz,y-iz,x+z,x-z$ are nilpotent elements in $A$. Also, we have $$A/(y+iz)A\cong (y+iz)A(1),\ A/(y-iz)A\cong (y-iz)A(1),$$ $$A/(x+z)A\cong (x+z)A(1),\ A/(x-z)A\cong (x-z)A(1).$$

Summarizing, we have the following conclusion.
\begin{proposition} The quadric hypersurface $A$ has nonprojective nonisomorphic indecomposable MCM modules (up to degree shifts):
$$\begin{array}{ll}
  \mathbb M^1&\cong A/(y+iz)A\cong (y+iz)A(1),\\
  \mathbb M^2&\cong A/(y-iz)A\cong (y-iz)A(1),\\
  \mathbb M^3&\cong A/(x+z)A\cong (x+z)A(1),\\
  \mathbb M^4&\cong A/(x-z)A\cong (x-z)A(1).\\
\end{array}$$
\end{proposition}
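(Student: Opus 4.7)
The plan is to combine the identification of primitive idempotents of $\End_{\gr A}(\mathbb M) \cong C(A) \cong \kk^4$ already carried out above with Lemma \ref{lem-ind} and Proposition \ref{prop-M}(iii). By Lemma \ref{lem-ind} every nonprojective indecomposable MCM module is a degree shift of a direct summand of $\mathbb M$, and the four pairwise orthogonal primitive idempotents $e_1, e_2, e_3, e_4$ computed above yield a complete list $\mathbb M^i = \mathbb M e_i$ of such summands. So the main remaining task is to identify each $\mathbb M^i$ explicitly as a cyclic $A$-module and to establish the second isomorphism.

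To pin down $\mathbb M^i \cong A/f_i A$, I note that Proposition \ref{prop-M}(iii) applies (since $\End_{\gr A}(\mathbb M)$ is a product of copies of $\kk$) and guarantees that each $\mathbb M^i$ is cyclic of the form $A/x_i A$ with a nonzero $x_i \in A_1$, so that $\dim (x_i A)_1 = 1$ and the degree-$1$ kernel of any surjection $A \twoheadrightarrow \mathbb M^i$ is one dimensional. It therefore suffices, for each $i$, to exhibit some nonzero $f_i \in A_1$ that annihilates the explicit generator of $\mathbb M^i = \mathbb M e_i$ recorded above; any such $f_i$ must span the degree-$1$ kernel, and hence $f_i A = x_i A$. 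Concretely, for $\mathbb M^1 = (\overline{m}_2 - i\overline{m}_3 + i\overline{m}_4)A$ one solves the linear system $(m_2 - im_3 + im_4)(ax + by + cz) \in K_1 = \mathrm{span}(r_1, r_2, r_3, r_4)$ and reads off $f_1 = y + iz$; the same procedure produces $f_2 = y - iz$, $f_3 = x + z$, $f_4 = x - z$.

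For the second isomorphism $A/f_i A \cong f_i A(1)$, I first check directly from the defining relations $xz + zx = yz + zy = 0$ and $y^2 = z^2 = -x^2$ that $f_i^2 = 0$ in $A$ for each $i$. Consequently, the map $A(-1) \to f_i A$ given by left multiplication by $f_i$ factors through a graded surjection $\bar\psi_i \colon (A/f_i A)(-1) \twoheadrightarrow f_i A$, i.e., a nonzero surjection $\mathbb M^i(-1) \twoheadrightarrow \Omega(\mathbb M^i)$. By Proposition \ref{prop-M}(iii), $\Omega(\mathbb M) \cong \mathbb M(-1)$, so the Krull--Schmidt property forces $\Omega(\mathbb M^i) \cong \mathbb M^{\sigma(i)}(-1)$ for some permutation $\sigma$ of $\{1,2,3,4\}$. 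Since $\bar\psi_i$ is nonzero and $\Hom_{\gr A}(\mathbb M^i, \mathbb M^j) = e_j \End_{\gr A}(\mathbb M) e_i = 0$ for $i \ne j$ by orthogonality of the $e_i$, one must have $\sigma(i) = i$; at which point $\bar\psi_i$ becomes a nonzero endomorphism of the indecomposable $\mathbb M^i(-1)$ with one-dimensional endomorphism ring, and is therefore an isomorphism.

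The main expected obstacle is the bookkeeping in the second paragraph: although it is in principle routine to solve for each $f_i$, juggling the four relations $r_1,\dots,r_4$ together with the specific coefficients of each idempotent $e_i$ demands care to avoid sign errors. Once the $f_i$ are in hand, both $f_i^2 = 0$ and the Krull--Schmidt argument are short.
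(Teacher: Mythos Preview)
Your proposal is correct and follows essentially the same route as the paper: identify the summands $\mathbb M^i=\mathbb M e_i$ via the explicit idempotents, recognize each as a cyclic module $A/f_iA$ by reading off the one-dimensional degree-$1$ annihilator of its generator, and then use $f_i^2=0$ to obtain the second isomorphism. The paper is terser at the last step---it simply notes that $y\pm iz$ and $x\pm z$ are nilpotent and asserts $A/f_iA\cong f_iA(1)$---whereas you supply a clean Krull--Schmidt/orthogonality argument (using $\Omega(\mathbb M)\cong\mathbb M(-1)$ from Proposition~\ref{prop-M}(iii) and $\Hom_{\gr A}(\mathbb M^i,\mathbb M^j)=0$ for $i\neq j$) to upgrade the surjection $(A/f_iA)(-1)\twoheadrightarrow f_iA$ to an isomorphism; this is a nice way to avoid any Hilbert-series bookkeeping.
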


By Theorem \ref{thm-res-iso}, $A$ has a right pre-resolution $\uEnd_A(\mathbb M\oplus A)$. Write $u_1=y+iz,u_2=y-iz,u_3=x+z,u_4=x-z$.
Since $A/u_iA$ is an MCM module for each $1\leq i\leq 4$, the map $\tau^\vee\colon \uHom_A(A,A)\to \uHom_A(u_iA,A)$ induced from the inclusion map $\tau\colon u_iA\to A$ is surjective. Therefore, for each homogeneous element $f\in\uHom_A(u_iA,A)$, there is a homogeneous element $a\in A$ such that $f(u_i)=au_i$. On the other hand, for each homogeneous element $a\in A$, there is a graded right $A$-module morphism $f\colon u_iA\to A$ defined by $f(u_i)=au_i$. Hence, we obtain $\uHom_A(u_iA,A)\cong Au_i(1)$.

Since $\underline{\mcm}A$ is semisimple and $C(A)\cong\End_A(\mathbb M)\cong\kk^4$, by the equivalence functor (\ref{eqq5}), we have $\uExt^1_A(u_iA,u_jA)=0$ for all $i\neq j$ and $\uExt^1_A(u_iA,u_iA)\cong \kk$. Therefore, the exact sequence $0\to u_iA\to A\to A/u_iA\to0$ induces a surjective map $\uHom_A(A,u_jA)\to\uHom_A(u_iA,u_jA)$ for $i\neq j$. Then we obtain  $\uHom_A(u_iA,u_jA)=u_jAu_i(1)$ for $i\neq j$. Similarly, we have $\uEnd_A(u_iA)=\kk\oplus u_iAu_i(1)$ for $i=1,2,3,4$. Then $\uEnd_A(\mathbb M\oplus A)$ is isomorphic to the following algebra
$$\left(
                                    \begin{array}{ccccc}
                                      u_1Au_1(1) & u_1Au_2(1) & u_1Au_3(1) & u_1Au_4(1) & u_1A(1) \\
                                      u_2Au_1(1) & u_2Au_2(1) & u_2Au_3(1) & u_2Au_4(1) & u_2A(1) \\
                                      u_3Au_1(1) & u_3Au_2(1) & u_3Au_3(1) & u_3Au_4(1) & u_3A(1) \\
                                      u_4Au_1(1) & u_4Au_2(1) & u_4Au_3(1) & u_4Au_4(1) & u_4A(1) \\
                                      Au_1 & Au_2 & Au_3 & Au_4 & A_{\ge1} \\
                                    \end{array}
                                  \right)\bigoplus\left(
                                                    \begin{array}{ccccc}
                                                      \kk & 0 & 0 & 0 & 0 \\
                                                      0 & \kk & 0 & 0 & 0 \\
                                                      0 & 0 & \kk & 0 & 0 \\
                                                      0 & 0 & 0 & \kk & 0 \\
                                                      0 & 0 & 0 & 0 & \kk \\
                                                    \end{array}
                                                  \right),
$$
where the multiplication is defined as below. For consistency of notations we set $u_5=1$. We simply write elements in the left matrix as $\left(u_ia_{ij}u_j\right)$. Then
$$\left(u_ia_{ij}u_j\right)\left(u_ib_{ij}u_j\right)=\left(\sum_{k=1}^5u_ia_{ik}u_kb_{kj}u_j\right).$$

\vspace{5mm}

\subsection*{Acknowledgments}
J.-W. He was supported by ZJNSF (No. LY19A010011), NSFC (No. 11971141). Y. Ye was supported by NSFC (No. 11971449).

\vspace{5mm}


\end{document}